\theoremstyle{plain}
\newtheorem{thm}{Theorem}[section]
\newtheorem{theorem}[thm]{Theorem}
\newtheorem{lemma}[thm]{Lemma}
\newtheorem{corollary*}[thm]{Corollary*}
\newtheorem{proposition}[thm]{Proposition}
\newtheorem{proposition*}[thm]{Proposition*}
\newtheorem{conjecture}[thm]{Conjecture}
\theoremstyle{definition}
\newtheorem{definition}[thm]{Definition}
\newtheorem{defn-thm}[thm]{Definition-Theorem}
\newcommand{\sM}{{\mathcal M}}
\def\rank{\operatorname{rank}}
\title[Descendent integrals and tautological rings]{Descendent integrals and tautological rings of\\ moduli spaces of curves}
\author{Kefeng Liu}
\dedicatory{Dedicated to Professor Shing-Tung Yau on the occasion of
his 60th birthday}
        \address{Center of Mathematical Sciences, Zhejiang University, Hangzhou, Zhejiang 310027, China;
                Department of Mathematics,University of California at Los Angeles,
                Los Angeles, CA 90095-1555, USA}
        \email{liu@math.ucla.edu, liu@cms.zju.edu.cn}
        \author{Hao Xu}
        \address{Center of Mathematical Sciences, Zhejiang University, Hangzhou, Zhejiang 310027, China}
        \email{haoxu@cms.zju.edu.cn}
\subjclass[2010]{14H10, 14H70, 14N35, 33D70} \keywords{Moduli spaces of curves,
intersection numbers, tautological rings}
\begin{document}

\begin{abstract}
The main objective of this paper is to give a summary of our recent
work on recursion formulae for intersection numbers on moduli spaces
of curves and their applications. We also present a conjectural
relation between tautological rings and the mock theta function.
\end{abstract}

\maketitle

\tableofcontents



\vskip 30pt
\section{Introduction}

The moduli space of curves is a very important subject of study in
algebraic geometry and mathematical physics. Mathematicians
realized long ago that the study of a single curve is not sufficient
even to understand the curve itself; the most fascinating aspect of
the story is that properties of curves can be better understood when
comparing general members in a natural family of curves in question.

The study of moduli spaces of curves started with Riemann. Although
its construction was consolidated only in the late 1960's in the hands of
Mumford, Deligne, Gieseker, etc., the moduli space of curves was
already implicit in the work of the classical Italian school of
algebraic geometry. It has never lost its value since its creation;
we see much beautiful mathematics flourishing around it in the past
years.

In this paper, we will only touch on one aspect of the moduli space of
curves, namely the intersection theory.
There is a large amount of literature on the intersection theory of
moduli spaces of curves, a nice account can be found in Vakil's survey paper
\cite{Va1}.

Since Mumford's pioneering work \cite{Mu}, we know that certain
geometrically natural cohomology classes on moduli spaces of curves are of primary interest.

A ground-breaking achievement in the early 1990's is the celebrated
Witten conjecture, giving a surprising connection of intersection theory
on moduli spaces of curves to the realm of integrable systems.
Shortly afterwards, Kontsevich gave a remarkable proof by expressing integrals of $\psi$ classes
as a new type of matrix integrals. Moreover, the Witten-Kontsevich theorem provides
an effective recursive way to compute
integrals of $\psi$ classes, or descendent integrals. On the other
hand, Gromov-Witten theory, also dubbed modern enumerative
geometry, revolutionized the study of Hodge integrals on moduli spaces of
curves, namely integrals of mixed $\psi$ and $\lambda$ classes.

Another important development in the 1990's is Faber's remarkable
conjectures on the tautological rings of moduli spaces of curves.

In the meantime, string duality has produced many conjectures about
the moduli spaces of stable curves and stable maps, such as the
mirror formula and the Mari\~no-Vafa formula. The interactions of
string theory and moduli spaces has been one of the  most exciting
research fields in mathematics for the past several years. For more
discussion, see the survey article \cite{Liu}.

In this paper, we first review the Witten-Kontsevich theorem, which
is the starting point of many results discussed here. After that, we
will give a survey of our work on explicit effective recursion
formulae for computing descendent integrals, higher Weil-Petersson
volumes of moduli spaces of curves and Witten's $r$-spin
intersection numbers. Most of these formulae provide feasible
algorithms for implementation on computers.

We also describe our proof of the Faber intersection number
conjecture using a recursive formula of $n$-point functions and discuss possible relations between tautological
rings and mock theta functions.

\

\noindent{\bf Acknowledgements.} The authors would like to thank
Professors Carel Faber, Sergei Lando, Jun Li, Chiu-Chu Melissa Liu,
Xiaobo Liu, Ravi Vakil and Jian Zhou for helpful communications
related to results in this paper. We thank Motohico Mulase, Brad
Safnuk and Bertrand Eynard for helpful conversations during the
International Academic Symposium on Riemann Surfaces, Harmonic Maps
and Visualization at Osaka City University in December, 2008. The
second author also thanks Kathrin Bringmann for explaining the
Kronecker symbol to him. Part of this paper was prepared during the
second author's visit to the Institute of Mathematics, Chinese Academy
of Sciences in June, 2008. He thanks Professor Xiangyu Zhou for
hospitality. The second author also wishes to thank Professor S.-T.
Yau for constant encouragement and helpful suggestions.

We thank Arthur Greenspoon of AMS for his careful proofreading,
which improved our exposition greatly. Any remaining errors are the responsibility of the authors.
We thank Professor Lizhen Ji for the tremendous help in editing and refereeing of this paper.

\vskip 30pt
\section{Intersection numbers and the Witten-Kontsevich theorem}
 Denote by $\overline{\sM}_{g,n}$ the moduli space of stable
$n$-pointed genus $g$ complex algebraic curves. We have the morphism
that forgets the last marked point
$$
\pi: \overline{\sM}_{g,n+1}\longrightarrow \overline{\sM}_{g,n}.
$$
Denote by $\sigma_1,\dots,\sigma_n$ the canonical sections of $\pi$. Let $\omega_{\pi}$ be the relative
dualizing sheaf; we have the following tautological classes on
moduli spaces of curves.
\begin{align*}
\psi_i&=c_1(\sigma_i^*(\omega_{\pi}))\\
\kappa_i&=\pi_*(\psi_{n+1}^{i+1})\\
\lambda_k&=c_k(\mathbb E),\quad 1\leq k\leq g,
\end{align*}
where $\mathbb E=\pi_*(\omega_{\pi})$ is the Hodge bundle.

Intuitively, $\psi_i$ is the first Chern class of the line bundle
corresponding to the cotangent space of the universal curve at the
$i$-th marked point and the fiber of $\mathbb E$ is the space of
holomorphic one-forms on the algebraic curve.

We use Witten's notation
$$\langle\tau_{d_1}\cdots\tau_{d_n}\kappa_{a_1}\cdots\kappa_{a_m}\mid\lambda_{1}^{k_{1}}
    \cdots\lambda_{g}^{k_{g}}\rangle:= \int_{\overline{\mathcal{M}}_{g,n}}\psi_{1}^{d_{1}}\cdots\psi_{n}^{d_{n}}\kappa_{a_1}\cdots\kappa_{a_m}\lambda_{1}^{k_{1}}
    \cdots\lambda_{g}^{k_{g}}.$$
These intersection numbers are called the Hodge integrals. They are
rational numbers because the moduli spaces of curves are orbifolds
(with quotient singularities) except in genus zero. Their degrees
should add up to $\dim \overline{\mathcal{M}}_{g,n}= 3g-3+n$.

Intersection numbers of pure $\psi$ classes
$\langle\tau_{d_1}\cdots\tau_{d_n}\rangle$ are often called
descendent integrals. Intersection numbers of pure $\kappa$ classes
$\langle\kappa_{a_1}\cdots\kappa_{a_m}\rangle$ are called higher
Weil-Petersson volumes.

 The classes $\kappa_i$ were
first introduced by Mumford \cite{Mu} on $\overline{\sM}_g$; their
generalization to $\overline{\sM}_{g,n}$ here is due to
Arbarello-Cornalba \cite{Ar-Co}.

Mumford's 1983 paper \cite{Mu} initiated the systematic study of the
intersection theory of the moduli space of curves. In particular,
Mumford computed the Chow ring of $\overline{\sM}_{2}$,
$$A^*(\overline{\sM}_{2})\cong\mathbb[\lambda,\delta]/
(5\lambda^3-\lambda^2\delta,110\lambda^2-21\lambda\delta+\delta^2),$$
where $\lambda=\lambda_1$ and $\delta=\delta_0+\delta_1$ is the full boundary divisor.

The Chow rings of $\overline{\sM}_{g}$ for $g\leq5$ have been studied by
Faber \cite{FaChow} and Izadi \cite{Iz}.

\subsection{Witten-Kontsevich theorem}
In 1990, Witten \cite{Wi} made a striking conjecture (first proved
by Kontsevich \cite{Ko}) that the generating function of $\psi$
class intersection numbers is governed by the KdV hierarchy. Now
Witten's conjecture has many different proofs \cite{CLL, Ka, KL,
KiL, Mir2, OP}.

Roughly speaking, Witten's motivation comes from the two seemingly
unrelated mathematical models that describe the physical theory of
two-dimensional gravity. One is the counting of triangulations of
surfaces, which is related to matrix models and the other is the
intersection theory of $\overline{\sM}_{g,n}$. The partition
function of the first model is known to obey the KdV hierarchy.

Kontsevich's remarkable proof uses a combinatorial description of
moduli spaces and Feynman diagram techniques. A very readable
exposition can be found in \cite{Lo1}.

Witten's conjecture revolutionized the intersection theory of moduli
spaces of curves and motivated a surge of subsequent developments:
Gromov-Witten theory, Faber's conjecture \cite{Fa2} and the Virasoro
conjecture of Eguchi-Hori-Xiong-Katz \cite{EHX}. Below we follow
Witten's nice exposition \cite{Wi}.

The KdV hierarchy is the following hierarchy of differential
equations for $n\geq 1$,
\begin{equation}\label{kdv}
\frac{\partial U}{\partial t_n}=\frac{\partial R_{n+1}}{\partial
t_0},
\end{equation}
 where $R_n$ are Gelfand-Dikii differential polynomials
in $U,\partial U/\partial t_0,\partial^2 U/\partial t_0^2,\dots$,
defined recursively by
\begin{equation}\label{kdvrec}
R_1=U,\qquad \frac{\partial R_{n+1}}{\partial
t_0}=\frac{1}{2n+1}\left(\frac{\partial U}{\partial
t_0}R_n+2U\frac{\partial R_n}{\partial t_0}+
\frac{1}{4}\frac{\partial^3 R_n}{\partial t_0^3}\right).
\end{equation}

 It is easy to see that
$$R_2=\frac{1}{2}U^2+\frac{1}{12}\frac{\partial^2U}{\partial
t_0^2},$$
$$R_3=\frac{1}{6}U^3+\frac{U}{12}\frac{\partial^3U}{\partial
t_0^3}+\frac{1}{24}(\frac{\partial U}{\partial
t_0})^2+\frac{1}{240}\frac{\partial^4U}{\partial t_0^4},$$
$$\vdots$$

The Witten-Kontsevich theorem states that the generating function
\begin{equation}\label{gen}
F(t_0, t_1, \ldots)= \sum_{g} \sum_{\bold n}
\langle\prod_{i=0}^\infty \tau_{i}^{n_i}\rangle_{g}
\prod_{i=0}^\infty \frac{t_i^{n_i} }{n_i!}
\end{equation}
 is a $\tau$-function for
the KdV hierarchy, i.e. $U=\partial^2 F/\partial t_0^2$ obeys all
equations in the KdV hierarchy. The first equation in the KdV
hierarchy is the classical KdV equation
\begin{equation}\label{firstkdv}
\frac{\partial U}{\partial t_1}=U\frac{\partial U}{\partial
t_0}+\frac{1}{12}\frac{\partial^3 U}{\partial t_0^3}.
\end{equation}

In addition, $F$ obeys the string equation
\begin{equation}\label{str}
\frac{\partial F}{\partial t_0}=\frac{t_0^2}{2}+\sum_{i=0}^\infty
t_{i+1}\frac{\partial F}{\partial t_i}
\end{equation}
and the dilaton equation
\begin{equation}\label{dil}
\frac{\partial F}{\partial t_1}=\frac{1}{24}+\sum_{i=0}^{\infty}\frac{2i+1}{3}t_i\frac{\partial F}{\partial t_i}.
\end{equation}

The string and dilaton equations can be proved directly in algebraic geometry,
e.g. see \cite{LZ}. Witten introduced the following notation for the derivatives of $F$,
$$\langle\langle\tau_{d_1}\cdots\tau_{d_n}\rangle\rangle:=\frac{\partial^nF}{\partial
t_{d_1}\cdots\partial t_{d_n}}.$$
The left-hand side of \eqref{kdv} is the same as
\begin{equation*}
\frac{\partial}{\partial
t_0}\langle\langle\tau_n\tau_0\rangle\rangle.
\end{equation*}
By the string equation, it is clear that upon integrating both sides of
\eqref{kdv} once in $t_0$ and using the recursion relation
\eqref{kdvrec}, we get
\begin{equation}\label{kdvcoeff}
\langle\langle\tau_n\tau_0\tau_0\rangle\rangle=\frac{1}{2n+1}\langle\langle\tau_{n-1}\tau_0\rangle\rangle\langle\langle\tau_0^3\rangle\rangle
+2\langle\langle\tau_{n-1}\tau_0^2\rangle\rangle\langle\langle\tau_0^2\rangle\rangle+\frac{1}{4}\langle\langle\tau_{n-1}\tau_0^4\rangle\rangle.
\end{equation}

The above discussion may be summarized as the following:
\begin{proposition}
Let $F$ be the generating function \eqref{gen} and $U=\partial^2
F/\partial t_0^2$. Then we have the following equivalent statements
of the Witten-Kontsevich theorem:
\begin{itemize}
\item[i)] U satisfies the KdV hierarchy \eqref{kdv} and the string
equation \eqref{str};
\item[ii)] U satisfies the first KdV equation \eqref{firstkdv}, the string
equation and the dilaton equation;
\item[iii)] F satisfies the recursion formula \eqref{kdvcoeff} and the string
equation.
\end{itemize}
Moreover, the Witten-Kontsevich theorem uniquely determines $F$.
\end{proposition}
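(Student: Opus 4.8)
The plan is to establish the three equivalences and the uniqueness by carefully tracking which relations follow from which. Before proving anything, I would note that the statement is essentially a bookkeeping exercise that repackages the KdV hierarchy together with the string and dilaton equations; the mathematical content (that $F$ actually satisfies KdV) is the Witten--Kontsevich theorem itself, which we take as given. So the proposition is about logical relations among the defining constraints.

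\medskip

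First I would prove the chain of implications among i), ii), and iii). The implication i)$\Rightarrow$ii) requires extracting the first KdV equation \eqref{firstkdv} from the full hierarchy \eqref{kdv}, which is immediate since \eqref{firstkdv} is literally the $n=1$ member, and then deriving the dilaton equation \eqref{dil} from the KdV hierarchy plus the string equation. For ii)$\Rightarrow$i) I would use the standard fact that the entire KdV hierarchy is generated from the first KdV equation once one knows the string equation, since the Gelfand--Dikii recursion \eqref{kdvrec} together with the string equation propagates the single equation \eqref{firstkdv} up through all higher flows. The equivalence with iii) is the most computational: the excerpt already shows that integrating \eqref{kdv} once in $t_0$ and applying the recursion \eqref{kdvrec} and the string equation yields exactly \eqref{kdvcoeff}, so iii) is just the integrated, coefficient-level form of i). I would present this by reading the displayed derivation culminating in \eqref{kdvcoeff} in both directions.

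\medskip

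Next I would handle the uniqueness claim, which I expect to be the conceptual heart of the argument. The idea is that the recursion \eqref{kdvcoeff} expresses $\langle\langle\tau_n\tau_0\tau_0\rangle\rangle$ in terms of correlators that are, in a suitable sense, ``simpler,'' and the string equation \eqref{str} lets one remove a $\tau_0$ insertion by lowering indices. The plan is to set up an induction on a complexity parameter, for instance the total descendent degree $\sum_i d_i$ together with the number of marked points, and show that every intersection number $\langle\tau_{d_1}\cdots\tau_{d_n}\rangle_g$ is determined by the string and dilaton equations together with \eqref{kdvcoeff}, with the base cases being the normalizations $\langle\tau_0^3\rangle_0=1$ and $\langle\tau_1\rangle_1=\tfrac{1}{24}$ that are pinned down by the string and dilaton equations directly. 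Dimension constraints (degrees summing to $3g-3+n$) ensure the induction is well-founded and that only finitely many correlators appear at each stage.

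\medskip

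The hard part will be verifying that the recursion genuinely reduces complexity in every case, rather than merely rearranging it. In particular, one must check that after using \eqref{kdvcoeff} to reduce a correlator with large $d_1$, and repeatedly applying the string equation to strip off $\tau_0$'s, one never re-encounters a correlator of equal or greater complexity; this is a subtle combinatorial point because the right-hand side of \eqref{kdvcoeff} contains products of two-point-like correlators whose individual complexities must be controlled. I would address this by choosing the induction order so that the leading index $n$ strictly decreases under \eqref{kdvcoeff} while the string equation strictly decreases the number of genuine $\tau_0$ insertions, and then argue that the lexicographic combination terminates. Once the reduction is shown to terminate at the normalized base cases, uniqueness of $F$ follows immediately, completing the proof.
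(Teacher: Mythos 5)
Your overall framing matches the paper's: the paper's entire proof is the single sentence that ``the only nontrivial part is that (ii) implies (i), which is proved in [LX7] (Corollary 2.4),'' so like you it treats the extraction of \eqref{firstkdv} from the hierarchy, the passage between \eqref{kdv} and its integrated coefficient form \eqref{kdvcoeff}, and the uniqueness of $F$ as routine. The problem is that the one step the paper singles out as genuinely nontrivial is exactly the step you dispose of by fiat. You assert that it is a ``standard fact'' that the first KdV equation plus the string equation generates the whole hierarchy because the Gelfand--Dikii recursion \eqref{kdvrec} ``propagates'' \eqref{firstkdv} up through the higher flows. That is not how the recursion works: \eqref{kdvrec} defines the differential polynomials $R_{n+1}$, but it does not by itself convert the single evolution equation $\partial U/\partial t_1=\partial R_2/\partial t_0$ into the statements $\partial U/\partial t_n=\partial R_{n+1}/\partial t_0$ for all $n$; establishing those requires a separate inductive argument at the level of correlation functions, and this is precisely the content of Corollary 2.4 of \cite{LX7}. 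Tellingly, your sketch of ii)$\Rightarrow$i) never uses the dilaton equation, even though it is part of hypothesis ii) and is needed in the actual argument (if string alone sufficed, the dilaton equation would be redundant in statement ii)). As written, your proof of the central implication is a restatement of the claim, not a proof.

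The remaining parts are acceptable in outline. The equivalence of i) and iii) is indeed just the displayed derivation of \eqref{kdvcoeff} read in both directions, and i)$\Rightarrow$ii) follows since KdV plus string is known to imply all the Virasoro constraints, of which the dilaton equation is the $L_0$ case (though you should say this rather than leave the derivation of \eqref{dil} unexplained). Your uniqueness argument via induction is the standard one, but you explicitly defer the verification that the reduction terminates (``the hard part will be verifying that the recursion genuinely reduces complexity''), so that portion is also a plan rather than a proof; in practice the point is that \eqref{kdvcoeff} strictly lowers the index $n$ of the distinguished insertion while the string equation handles the extra $\tau_0$'s, and the dimension constraint $\sum d_i=3g-3+n$ bounds everything. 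To make the proposal into a proof you would need either to supply the inductive argument for ii)$\Rightarrow$i) or, as the paper does, to cite \cite{LX7} for it.
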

\begin{proof}
The only nontrivial part is that (ii) implies (i), which is proved in \cite{LX7} (Corollary 2.4).
\end{proof}

\subsection{Virasoro constraints} The Witten-Kontsevich theorem has an important reformulation
due to Dijkgraaf, Verlinde, and Verlinde \cite{DVV}, in terms of the
Virasoro constraints.

Define a family of differential operators $L_k$ for $k\geq-1$ by
\begin{multline} \label{vir}
    L_k = -\frac{1}{2}  (2k+3)!!
        \frac{\partial }{\partial t_{k+1} }
        + \frac{1}{2} \sum_{j=0}^{\infty} \frac{(2(j+k)+1)!! }{(2j-1)!! } t_j
        \frac{\partial }{\partial t_{j+k} } \\
    + \frac{1}{4} \sum_{d_1 + d_2 = k-1}
        (2d_1 + 1)!! (2d_2 + 1)!! \frac{\partial^2 }{\partial t_{d_1}
        \partial t_{d_2}}
        + \frac{\delta_{k,-1}t_0^2}{4} + \frac{\delta_{k,0}
        }{48},
 \end{multline}
It is straightforward to verify that these operators satisfy the
Virasoro relations
$$[L_n,L_m]=(n-m)V_{n+m}.$$

Dijkgraaf, Verlinde, and Verlinde \cite{DVV} have proved that the
KdV form of Witten's conjecture is equivalent to the following
Virasoro constraints. An elegant exposition of the proof can be
found in \cite{Ge}.
\begin{proposition} (DVV formula) Let $F$ be the generating function of descendent integrals defined in \eqref{gen}.
 We have $L_k(\exp F)=0$ for $k\geq -1$. More explicitly,
\begin{multline}\label{dvv}
\langle\tau_{k+1}\tau_{d_1}\cdots\tau_{d_n}\rangle_g=\frac{1}{(2k+3)!!}\left[\sum_{j=1}^n
\frac{(2k+2d_j+1)!!}{(2d_j-1)!!}\langle\tau_{d_1}\cdots
\tau_{d_{j}+k}\cdots\tau_{d_n}\rangle_g\right.\\
+\frac{1}{2}\sum_{r+s=k-1}
(2r+1)!!(2s+1)!!\langle\tau_r\tau_s\tau_{d_1}\cdots\tau_{d_n}\rangle_{g-1}\\
\left.+\frac{1}{2}\sum_{r+s=k-1} (2r+1)!!(2s+1)!!
\sum_{\underline{n}=I\coprod J}\langle\tau_r\prod_{i\in
I}\tau_{d_i}\rangle_{g'}\langle\tau_s\prod_{i\in
J}\tau_{d_i}\rangle_{g-g'}\right].
\end{multline}
\end{proposition}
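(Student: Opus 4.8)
The plan is to show that the constraints $L_k(\exp F)=0$ are equivalent to the KdV-and-string characterization of $F$ supplied by the preceding Proposition, which we are free to assume. Writing $Z=\exp F$, I would first unwind what $L_kZ=0$ asserts about $F$. Dividing by $Z$ and using the chain rule, the two first-order pieces of $L_k$ in \eqref{vir} contribute single derivatives $\partial F/\partial t_\bullet$: the term $-\tfrac12(2k+3)!!\,\partial_{t_{k+1}}$ produces, after dividing by $(2k+3)!!$, the insertion $\tau_{k+1}$ on the left of \eqref{dvv}, while the sum $\tfrac12\sum_j\frac{(2(j+k)+1)!!}{(2j-1)!!}t_j\partial_{t_{j+k}}$ produces the descendent-raising sum $\sum_j\frac{(2k+2d_j+1)!!}{(2d_j-1)!!}\langle\cdots\tau_{d_j+k}\cdots\rangle_g$ in the first line. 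The decisive second-order piece $\tfrac14\sum_{d_1+d_2=k-1}(2d_1+1)!!(2d_2+1)!!\,\partial^2/\partial t_{d_1}\partial t_{d_2}$, applied to $\exp F$, yields by the product rule both $\partial^2F/\partial t_{d_1}\partial t_{d_2}$ and $(\partial F/\partial t_{d_1})(\partial F/\partial t_{d_2})$; separating the connected correlator from the product, and fixing the total genus by the dimension constraint, reproduces exactly the connected genus $g-1$ term and the $\underline n=I\coprod J$ splitting term of \eqref{dvv}. Hence $L_kZ=0$ is literally the family of recursions \eqref{dvv}, and it remains to prove the latter.

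I would then dispose of the two lowest operators by direct substitution into \eqref{vir}. For $k=-1$ the second-order sum is empty and $L_{-1}Z=0$ collapses to the string equation \eqref{str}; for $k=0$ it collapses to the dilaton equation \eqref{dil}. Both are at our disposal, so \eqref{dvv} holds for $k=-1$ and $k=0$.

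For $k\ge 1$ I would propagate the constraints using the Virasoro bracket relations $[L_n,L_m]=(n-m)L_{n+m}$. The key point is that if $L_nZ=0$ and $L_mZ=0$ then $[L_n,L_m]Z=0$, so $L_{n+m}Z=0$ whenever $n\neq m$. Thus the whole hierarchy is forced by finitely many base constraints: given the string case $L_{-1}Z=0$ together with one genuinely new constraint, say $L_2Z=0$, the bracket $[L_{-1},L_2]=-3L_1$ yields $L_1Z=0$, then $[L_1,L_2]=-L_3$ yields $L_3Z=0$, and $[L_1,L_k]=(1-k)L_{k+1}$ yields $L_{k+1}Z=0$ for all $k\ge 3$ by induction. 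So everything reduces to establishing the single input $L_2Z=0$, that is, \eqref{dvv} for $k=2$.

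The genuinely new content, and the main obstacle, is to deduce this one remaining base constraint from the KdV hierarchy. Here one has to match the Gelfand-Dikii recursion \eqref{kdvrec}, in its integrated form \eqref{kdvcoeff}, against the precise double-factorial weights $(2k+2d_j+1)!!/(2d_j-1)!!$ and the normalization $1/(2k+3)!!$ appearing in \eqref{dvv}. Concretely, I would integrate the relevant KdV flow once in $t_0$, use the string equation to convert a $t_0$-derivative into the descendent-lowering operation, and carefully track the Gelfand-Dikii polynomials; the bookkeeping of these double factorials is the technical heart of the argument. A streamlined account of exactly this matching is given in \cite{Ge}, the original equivalence being due to \cite{DVV}.
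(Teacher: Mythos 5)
The paper offers no proof of this proposition at all: it is attributed to \cite{DVV}, with \cite{Ge} cited for an exposition, so there is no in-paper argument to measure yours against. Judged on its own terms, your outline has the right architecture. The unwinding of $L_k(\exp F)=0$ into the explicit recursion \eqref{dvv} — the first-order pieces giving the $\tau_{k+1}$ insertion and the descendent-raising sum, the second-order piece producing both the connected genus $g-1$ correlator and the $\underline{n}=I\coprod J$ splitting via the product rule on $\exp F$ — is correct and routine; the identification of $L_{-1}(\exp F)=0$ with the string equation \eqref{str} is correct; and the observation that $L_{-1}$ together with any single $L_k$, $k\geq 2$, generates the entire hierarchy through $[L_n,L_m]=(n-m)L_{n+m}$ is a legitimate and standard reduction. (One caveat: with the operators exactly as printed in \eqref{vir}, $L_0(\exp F)=0$ does \emph{not} literally reduce to the dilaton equation \eqref{dil} — the constant terms disagree, $1/72$ versus $1/24$ after normalization; the usual normalization has $1/16$ where \eqref{vir} has $1/48$. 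This is a defect of the displayed formula rather than of your argument, but since you invoke ``direct substitution into \eqref{vir}'' it would surface immediately if you carried the substitution out.)

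The genuine gap is that the one step carrying all of the content — deducing $L_2(\exp F)=0$, equivalently \eqref{dvv} for $k=2$, from the KdV hierarchy — is not performed but deferred to \cite{Ge} and \cite{DVV}. You correctly identify this as ``the technical heart,'' but identifying it is not doing it: matching the integrated Gelfand--Dikii recursion \eqref{kdvcoeff} against the weights $(2k+2d_j+1)!!/(2d_j-1)!!$ and the normalization $1/(2k+3)!!$ is precisely the theorem of \cite{DVV}, and everything else in your proposal (the exponential bookkeeping, the $sl_2$-plus-one-generator argument) is routine by comparison. Note also that the commutator reduction buys less than it appears to: establishing even the single constraint $L_2(\exp F)=0$ requires essentially the same Gelfand--Dikii computation as the general case, and the standard argument in \cite{Ge} in fact derives the whole family at once, one constraint per KdV flow, rather than isolating $k=2$. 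As it stands your proposal is a correct plan whose decisive computation is outsourced to the same references the paper itself cites in lieu of a proof.
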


\vskip 30pt
\section{The $n$-point function}

\begin{definition} In \cite{Fa2},
the following generating function,
$$F(x_1,\cdots,x_n)=\sum_{g=0}^{\infty}\sum_{\sum d_i=3g-3+n}\langle\tau_{d_1}\cdots\tau_{d_n}\rangle_g\prod_{i=1}^n x_i^{d_i}$$
is called the $n$-point function.
\end{definition}

The coefficients of $n$-point functions encode all information of
intersection numbers of $\psi$ classes. Okounkov \cite{Ok2} obtained
a beautiful expression of the $n$-point functions using
$n$-dimensional error-function-type integrals. However, it is very
difficult to extract coefficients from Okounkov's analytic formula.

Br\'ezin and Hikami \cite{BH} apply correlation functions of GUE
ensemble to uncover explicit formulae of $n$-point functions.

\subsection{A recursive formula of $n$-point functions}

Consider the following ``normalized'' $n$-point function
$$G(x_1,\dots,x_n)=\exp\left(\frac{-\sum_{j=1}^n
x_j^3}{24}\right) F(x_1,\dots,x_n).$$

The one-point function $G(x)=\frac{1}{x^2}$ is due to Witten; we
have also Dijkgraaf's two-point function
$$G(x,y)=\frac{1}{x+y}\sum_{k\geq0}\frac{k!}{(2k+1)!}\left(\frac{1}{2}xy(x+y)\right)^k$$
and Zagier's three-point function \cite{Za} which we learned from
Prof. C. Faber,
\begin{equation*}
G(x,y,z)=\sum_{r,s\geq
0}\frac{r!S_r(x,y,z)}{4^r(2r+1)!!\cdot2}\cdot\frac{\Delta^s}{8^s(r+s+1)!},
\end{equation*}
where $S_r(x,y,z)$ and $\Delta$ are the homogeneous symmetric
polynomials defined by
\begin{align*}
S_r(x,y,z)&=\frac{(xy)^r(x+y)^{r+1}+(yz)^r(y+z)^{r+1}+(zx)^r(z+x)^{r+1}}{x+y+z}\in\mathbb Z[x,y,z],\\
\Delta(x,y,z)&=(x+y)(y+z)(z+x)=\frac{(x+y+z)^3}{3}-\frac{x^3+y^3+z^3}{3}.
\end{align*}

The two- and three-point functions were found in the early 1990's.
These explicit form of two- and three-point functions played a crucial role in
Faber's pioneering work on tautological rings \cite{Fa2}.
Since then it has been a prominent open problem
to find explicit formulae of $n$-point functions, closed or recursive. Faber's work \cite{Fa2}
indicated clearly that this is probably the first step toward a proof of his intersection
number conjecture.

By
solving the differential equation coming from Witten's KdV
coefficient equation \eqref{kdvcoeff}, we get a recursive formula
for normalized $n$-point functions, generalizing Dijkgraaf and
Zagier's formulae.

\begin{theorem}\cite{LX7}\label{npt} For $n\geq2$,
\begin{equation*}
G(x_1,\dots,x_n)=\sum_{r,s\geq0}\frac{(2r+n-3)!!}{4^s(2r+2s+n-1)!!}P_r(x_1,\dots,x_n)\Delta(x_1,\dots,x_n)^s,
\end{equation*}
where $P_r$ and $\Delta$ are homogeneous symmetric polynomials
defined by
\begin{align*}
\Delta(x_1,\dots,x_n)&=\frac{(\sum_{j=1}^nx_j)^3-\sum_{j=1}^nx_j^3}{3},\nonumber\\
P_r(x_1,\dots,x_n)&=\left(\frac{1}{2\sum_{j=1}^nx_j}\sum_{\underline{n}=I\coprod
J}(\sum_{i\in I}x_i)^2(\sum_{i\in J}x_i)^2G(x_I)
G(x_J)\right)_{3r+n-3}\nonumber\\
&=\frac{1}{2\sum_{j=1}^nx_j}\sum_{\underline{n}=I\coprod
J}(\sum_{i\in I}x_i)^2(\sum_{i\in J}x_i)^2\sum_{r'=0}^r
G_{r'}(x_I)G_{r-r'}(x_J),
\end{align*}
where $I,J\ne\emptyset$, $\underline{n}=\{1,2,\ldots,n\}$ and
$G_g(x_I)$ denotes the degree $3g+|I|-3$ homogeneous component of
the normalized $|I|$-point function $G(x_{k_1},\dots,x_{k_{|I|}})$,
where $k_j\in I$.
\end{theorem}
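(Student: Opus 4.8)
The plan is to prove the formula by induction on the number $n$ of marked points, with base case Witten's one-point function $G(x)=x^{-2}$, and to reduce the closed expression to a single first-order recursion in the genus. Write $G_g=G_g(x_1,\dots,x_n)$ for the degree $3g+n-3$ homogeneous component of $G(x_1,\dots,x_n)$; note that the polynomial $P_r$ of the statement has degree $3r+n-3$ and is built solely from the lower-point functions $G(x_I),G(x_J)$ with $1\le |I|,|J|\le n-1$, hence is known by the inductive hypothesis. A one-line computation with the coefficients $a_{r,s}=\frac{(2r+n-3)!!}{4^s(2r+2s+n-1)!!}$ gives $a_{r,s-1}=4(2r+2s+n-1)\,a_{r,s}$, so that the asserted identity $G=\sum_{r,s\ge 0}a_{r,s}P_r\Delta^s$ is equivalent, degree by degree in the genus, to the recursion
\begin{equation}\label{genrec}
(2g+n-1)\,G_g=\frac{\Delta}{4}\,G_{g-1}+P_g,\qquad g\ge 0,\quad G_{-1}:=0 .
\end{equation}
Indeed, unfolding \eqref{genrec} downward in $g$ shows that the coefficient of $P_r\Delta^{g-r}$ is $\frac{1}{2r+n-1}\prod_{j=r+1}^{g}\frac{1}{4(2j+n-1)}=\frac{(2r+n-3)!!}{4^{\,g-r}(2g+n-1)!!}$, which is exactly $a_{r,g-r}$. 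Thus the entire theorem reduces to establishing \eqref{genrec} from the Witten--Kontsevich theorem.

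The heart of the matter is therefore to derive \eqref{genrec}. I would begin from the KdV coefficient equation \eqref{kdvcoeff}, differentiate it with respect to $t_{d_1},\dots,t_{d_{n-1}}$ and set all $t_i=0$, turning the identity among free-energy derivatives into a family of identities among intersection numbers which are then assembled into the generating function $G$. Under this procedure the right-hand side of \eqref{kdvcoeff} separates into exactly the two features visible in \eqref{genrec}. The two product terms $\langle\langle\tau_{m-1}\tau_0\rangle\rangle\langle\langle\tau_0^3\rangle\rangle$ and $\langle\langle\tau_{m-1}\tau_0^2\rangle\rangle\langle\langle\tau_0^2\rangle\rangle$ factorize under the Leibniz rule into a sum over splittings $\underline{n}=I\coprod J$ of products of lower-point correlators, and assemble into the source term $P_g$. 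The remaining, non-factorizing fourth-derivative term $\tfrac14\langle\langle\tau_{m-1}\tau_0^4\rangle\rangle$ carries the genus shift---it is the counterpart of the genus-reduction term of the DVV formula \eqref{dvv}---and produces $\tfrac{\Delta}{4}\,G_{g-1}$. One may use the transparent trichotomy of \eqref{dvv}, namely reindexing, genus reduction, and splitting, as a guide to this bookkeeping.

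Two devices make the reduction work. First, the string equation \eqref{str} is used to eliminate all the spurious $\tau_0$ insertions (the $\tau_0^2$ on the left of \eqref{kdvcoeff} and the $\tau_0^3,\tau_0^4$ on the right), re-expressing everything through $n$-point data with no extra marked points; this is the origin of the prefactor $\frac{1}{2\sum_j x_j}$ in $P_r$ and of the precise shape $\Delta=\frac{(\sum_j x_j)^3-\sum_j x_j^3}{3}$. Second, the normalization $\exp(-\sum_j x_j^3/24)$ is chosen so that the cubic (Airy-type) corrections generated by the factor $\exp(\sum_j x_j^3/24)$ cancel the inhomogeneous remnants of the string-equation reduction, leaving the clean homogeneous recursion \eqref{genrec}. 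Once \eqref{genrec} is in hand, solving it by iterating downward in the genus yields the closed formula, and the result is checked against the quoted two- and three-point functions: for $n=2$ one has $P_r=\frac{1}{x+y}\,\delta_{r,0}$, so \eqref{genrec} collapses to $(2g+1)G_g=\frac{\Delta}{4}G_{g-1}$ and reproduces Dijkgraaf's series, while $n=3$ reproduces Zagier's formula.

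The main obstacle is precisely the passage from \eqref{kdvcoeff} to \eqref{genrec}. One must carry out the string-equation reduction of every $\tau_0$ insertion and verify that, after multiplication by the normalizing exponential, the double-factorial weights in \eqref{kdvcoeff}, the combinatorial splitting factors, and the cubic corrections collapse exactly onto the coefficient $2g+n-1$ on the left, the operator $\tfrac{\Delta}{4}$ on the right, and the single symmetric polynomial $P_g$. This is a delicate but finite calculation; the genus-zero identity $(n-1)G_0=P_0$ (equivalently the multinomial formula for genus-zero descendents) provides both a base case and a check on the normalizing constants.
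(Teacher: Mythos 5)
Your proposal follows essentially the same route as the paper: unfolding the closed formula into the first-order genus recursion $(2g+n-1)G_g=P_g+\frac{\Delta}{4}G_{g-1}$ is exactly Proposition \ref{npt2}(i), and deriving that recursion from the Witten--Kontsevich theorem via the string and dilaton equations --- with the normalization $\exp(-\sum_j x_j^3/24)$ converting the $\frac{1}{12}(\sum_j x_j)^4$ term into $\frac{\Delta}{4}$ --- is precisely how the paper proceeds, except that it starts from the once-integrated first KdV equation \eqref{kdv2} (equivalently Proposition \ref{npt2}(ii)) rather than the general coefficient equation \eqref{kdvcoeff}, a cosmetic difference. Your coefficient unfolding and the $n=2$ consistency check against Dijkgraaf's two-point function are both correct.
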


Dijkgraaf and Zagier's formulae gave us much
inspiration in writing down the general pattern of the above recursive formula.
On the other hand, it took us great effort to get the correct coefficients on the right hand side.

\begin{proposition}\cite{LX7}\label{npt2} The recursion relation in Theorem \ref{npt} is equivalent to either one of the following
statements.
\begin{enumerate}
\item[i)] The normalized $n$-point functions satisfy the following recursion relation
$$G_g(x_1,\dots,x_n)=
\frac{1}{(2g+n-1)}P_g(x_1,\dots,x_n)+\frac{\Delta(x_1,\dots,x_n)}{4(2g+n-1)}G_{g-1}(x_1,\dots,x_n).$$

\item[ii)] The $n$-point functions $F_g(x_1,\dots,x_n)$ satisfy the following recursion relation
\begin{multline*}
 (2g+n-1)\left(\sum_{i=1}^n x_i\right)F_g(x_1,\dots,x_n)=\frac{1}{12}\left(\sum_{i=1}^n x_i\right)^4F_{g-1}(x_1,\dots,x_n)\\
+\frac{1}{2}\sum_{g'=0}^g\sum_{\underline{n}=I\coprod J} \left(\sum_{i\in I}
x_i\right)^2\left(\sum_{i\in J} x_i\right)^2 F_{g'}(x_I)F_{g-g'}(x_J).
\end{multline*}
\end{enumerate}
\end{proposition}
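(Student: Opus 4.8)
The plan is to establish the two equivalences separately: first Theorem \ref{npt} $\Leftrightarrow$ (i), and then (i) $\Leftrightarrow$ (ii). Throughout I abbreviate $s_1=\sum_{j=1}^n x_j$, $s_I=\sum_{i\in I}x_i$, and $c=\sum_{j=1}^n x_j^3$, so that $\Delta=(s_1^3-c)/3$, and I let $D=\sum_{i=1}^n x_i\,\partial/\partial x_i$ denote the Euler operator.

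For Theorem \ref{npt} $\Leftrightarrow$ (i): since $P_r$ is homogeneous of degree $3r+n-3$ and $\Delta$ of degree $3$, the summand $P_r\Delta^s$ has degree $3(r+s)+n-3$, so the degree $3g+n-3$ component $G_g$ of the closed formula is the partial sum over $r+s=g$, namely $G_g=\sum_{s=0}^g\frac{(2(g-s)+n-3)!!}{4^s(2g+n-1)!!}P_{g-s}\Delta^s$. I would isolate the $s=0$ term, which is exactly $\frac{1}{2g+n-1}P_g$, and check that reindexing the remaining terms by $s\mapsto s-1$ and using $(2g+n-1)!!=(2g+n-1)(2g+n-3)!!$ identifies them with $\frac{\Delta}{4(2g+n-1)}G_{g-1}$; this is (i). Conversely (i) is a two-term recursion in $g$ whose unique solution, by induction from $G_0=\frac{1}{n-1}P_0$, is the closed formula. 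This step is pure bookkeeping with double factorials.

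For (i) $\Leftrightarrow$ (ii) I would pass to generating functions. As $G_g$ is homogeneous of degree $3g+n-3$, the operator $\tfrac23 D+\tfrac n3+1$ acts on it as multiplication by $2g+n-1$; summing (i) over $g$ and clearing $2s_1$ therefore yields the single identity
$$2s_1\Big(\tfrac23 D+\tfrac n3+1\Big)G=\sum_{\underline{n}=I\coprod J}s_I^2 s_J^2\,G(x_I)G(x_J)+\tfrac{s_1\Delta}{2}\,G,$$
with $G=\sum_g G_g$, while the same reasoning turns (ii) into $2s_1(\tfrac23 D+\tfrac n3+1)F=\tfrac16 s_1^4 F+\sum_{I\coprod J}s_I^2 s_J^2 F(x_I)F(x_J)$. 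I would then substitute $G=\exp(-c/24)F$ and $G(x_I)=\exp(-c_I/24)F(x_I)$, where $c_I=\sum_{i\in I}x_i^3$; since $c=c_I+c_J$ across a partition, the exponential factors cleanly out of the quadratic term. The one nonroutine input is $D\exp(-c/24)=-\tfrac{c}{8}\exp(-c/24)$, which follows from $Dc=3c$ and contributes an extra term $-\tfrac{s_1 c}{6}F$ on the left.

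The crux, and the step I expect to be the main obstacle, is the cancellation produced by this extra cubic term: writing $\tfrac{s_1\Delta}{2}=\tfrac{s_1^4}{6}-\tfrac{s_1 c}{6}$ shows that the $-\tfrac{s_1 c}{6}F$ generated by differentiating the exponential matches exactly the $c$-part of $\tfrac{s_1\Delta}{2}G$, so these two contributions cancel and the $G$-identity collapses to precisely the $F$-identity above. Once this is verified, the two generating-function identities are literally the same equation, and reading off the homogeneous component of degree $3g+n-3$ recovers (i) and (ii) genus by genus. I would close by noting that the divisibility of $\sum_{I\coprod J}s_I^2 s_J^2 G(x_I)G(x_J)$ by $s_1$, implicit in the definition of $P_r$, is consistent with the resulting polynomial identity.
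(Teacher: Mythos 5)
Your proposal is correct: the degree bookkeeping with double factorials for Theorem \ref{npt} $\Leftrightarrow$ (i) checks out, and the substitution $G=\exp(-c/24)F$ together with $\frac{s_1\Delta}{2}=\frac{s_1^4}{6}-\frac{s_1c}{6}$ does produce exactly the cancellation of the $-\frac{s_1c}{6}F$ terms needed for (i) $\Leftrightarrow$ (ii). This is essentially the same route as the paper's (the paper defers details to \cite{LX7} but the normalization $G=e^{-\sum x_j^3/24}F$ and the identity $\Delta=\frac{(\sum x_j)^3-\sum x_j^3}{3}$ are precisely the intended mechanism), so no further comparison is needed.
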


After we used Theorem \ref{npt} to prove Proposition
\ref{npt2}(ii), we realized that the latter has already been embodied
in the Witten-Kontsevich theorem. Integrating the KdV
equation \eqref{firstkdv} once in $t_0$, we get
\begin{equation}\label{kdv2}
\frac{\partial^2 F}{\partial t_1\partial
t_0}=\frac{1}{2}\left(\frac{\partial F}{\partial t_0^2}\right)^2+
\frac{1}{12}\frac{\partial^4 F}{\partial t_0^4}+H(t_1,t_2,\dots),
\end{equation}
where $H$ does not depend on $t_0$. In fact, we can prove $H=0$
using the string equation and the KdV equation \eqref{firstkdv};
details can be found in Section 2 of \cite{LX7}. So equation
\eqref{kdv2} can be rewritten as
$$\langle\langle\tau_0\tau_1\rangle\rangle=\frac{1}{12}\langle\langle\tau_0^4\rangle\rangle
+\frac{1}{2}\langle\langle\tau_0^2\rangle\rangle\langle\langle\tau_0^2\rangle\rangle.$$
A minute's thought will convince you that this is just  the identity in Proposition
\ref{npt2}(ii) after we apply the string equation \eqref{str} and the dilaton equation \eqref{dil}.

The normalized $n$-point function $G$ has some nice vanishing
properties not possessed by the original $n$-point function $F$.

\begin{theorem} \cite{LX7} \label{nptcoeff} Let $\mathcal{C} \left(\prod_{j=1}^n
x_j^{d_j},p(x_1,\dots,x_n)\right)$ denote the coefficient of
$\prod_{j=1}^n x_j^{d_j}$ in a polynomial or formal power series
$p(x_1,\dots,x_n)$.
\begin{enumerate}
\item[i)] Let $k>2g-2+n$, $d_j\geq0$ and $\sum_{j=1}^n
d_j=3g-2+n-k$. Then
\begin{equation*}
\mathcal C\left(z^k\prod_{j=1}^{n}x_j^{d_j},
G_g(z,x_1,\dots,x_n)\right)=0.
\end{equation*}

\item[ii)] Let $d_j\geq0$, $\sum_{j=1}^n
d_j=g$ and $a=\#\{j\mid d_j=0\}$. Then
\begin{equation*}
\mathcal C\left(z^{2g-2+n}\prod_{j=1}^{n}x_j^{d_j},
G_g(z,x_1,\dots,x_n)\right)=\frac{1}{4^g\prod_{j=1}^n(2d_j+1)!!}.
\end{equation*}

\item[iii)] Let $d_j\geq0$, $\sum_{j=1}^n
d_j=g+1$, $a=\#\{j\mid d_j=0\}$ and $b=\#\{j\mid d_j=1\}$. Then
\begin{equation*}
\mathcal C\left(z^{2g-3+n}\prod_{j=1}^{n}x_j^{d_j},
G_g(z,x_1,\dots,x_n)\right)=\frac{2g^2+(2n-1)g+\frac{n^2-n}{2}-3+\frac{5a-a^2}{2}}{4^g\prod_{j=1}^n(2d_j+1)!!}.
\end{equation*}
\end{enumerate}
\end{theorem}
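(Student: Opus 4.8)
The plan is to prove all three parts by a double induction on the number of marked points and the genus, using the recursion of Proposition~\ref{npt2}(i) applied to the $(n+1)$-variable function $G_g(z,x_1,\dots,x_n)$, namely
\[
G_g(z,x_1,\dots,x_n)=\frac{1}{2g+n}\,P_g(z,x_1,\dots,x_n)+\frac{\Delta(z,x_1,\dots,x_n)}{4(2g+n)}\,G_{g-1}(z,x_1,\dots,x_n),
\]
together with the defining identity for $P_g$ in the cleared form $2T\,P_g=Q^{[3g+n-1]}$, where $T=z+\sum_j x_j$ is the total sum of variables, $Q=\sum_{I\coprod J}\sigma_I^2\sigma_J^2 G(x_I)G(x_J)$ (an \emph{ordered} sum over splits, $\sigma_I$ the sum of the variables in the block $I$), and $[\,\cdot\,]$ denotes the homogeneous component of the indicated degree. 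The base cases are the one-, two- and three-point functions of Section~3, for which all three statements follow directly from the closed formulas of Witten, Dijkgraaf and Zagier. Throughout I use the decomposition $\Delta(z,x_1,\dots,x_n)=z^2S+zS^2+\Delta(\mathbf x)$ with $S=\sum_j x_j$, so that $\deg_z\Delta=2$.

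I would establish (i) first, since it is the engine that kills cross terms in (ii) and (iii). Rephrased, (i) asserts exactly that $\deg_z G_g\le 2g-2+n$. For the $\Delta\,G_{g-1}$ summand this is immediate: $\deg_z\Delta\le2$ and the inductive hypothesis gives $\deg_z G_{g-1}\le 2(g-1)-2+n$. For $P_g$, note that in a term of $Q^{[3g+n-1]}$ the variable $z$ lies in exactly one block, say $I$, and the inductive hypothesis on fewer points bounds $\deg_z G_{r'}(x_I)\le 2r'+|I|-3$; since $\deg_z\sigma_I^2\le2$ and $2r'+|I|\le 2g+n$ with equality only when $r'=g,\ |I|=n,\ |J|=1$, one gets $\deg_z Q^{[3g+n-1]}\le 2g+n-1$, hence $\deg_z P_g\le 2g+n-2$ after dividing by $2T$. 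Combining the two summands proves (i).

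For (ii) I extract the coefficient of $z^{2g-2+n}\prod_j x_j^{d_j}$ with $\sum d_j=g$, i.e. the leading $z$-coefficient, and here (i) does most of the work. In the $\Delta\,G_{g-1}$ term only the piece $z^2S$ survives (the $zS^2$ and $\Delta(\mathbf x)$ pieces would require $z$-powers of $G_{g-1}$ beyond its degree, which vanish by (i)), and it reduces to leading $z$-coefficients of $G_{g-1}$ governed by (ii) at genus $g-1$; after converting $(2d_i-1)!!$ to $(2d_i+1)!!$ the sum telescopes to a factor $\sum_{d_i\ge1}(2d_i+1)=2g+n-a$. In the $P_g$ term the degree count above isolates the unique configuration $I\ni z$ with $|I|=n$, $J=\{x_\ell\}$, $r'=g$, $r''=0$; since $G_0$ on a point is $1/x_\ell^2$ and $\sigma_J^2=x_\ell^2$, this block contributes $1$ and forces $d_\ell=0$, so only the $a$ indices with $d_\ell=0$ contribute, each via (ii) on $n$ points, and the ordered-sum factor $2$ (compensated by the $1/2$ in $P_g$) yields $a/(4^g\prod_j(2d_j+1)!!)$. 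The two contributions combine as $a+(2g+n-a)=2g+n$, cancelling the denominator $2g+n$ and producing exactly $1/(4^g\prod_j(2d_j+1)!!)$.

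Part (iii) follows the same scheme but is the main obstacle. Now the target $z$-power $2g-3+n$ is one below the top, so (i) no longer annihilates as much: in $\Delta\,G_{g-1}$ both the $z^2S$ and $zS^2$ pieces contribute, bringing in the leading and subleading $z$-coefficients of $G_{g-1}$ (i.e. (ii) and (iii) at genus $g-1$), while in $P_g$ three families of configurations survive — $|I|=n$ with the subleading $z$-coefficient of $G_g(x_I)$, $|I|=n$ with the leading coefficient paired against the linear ($z^1$) part of $\sigma_I^2$, and $|I|=n-1$ with a two-element block $J$ (on which $G_0(x_J)=1/\sigma_J$) — so parts (ii) and (iii) on both $n$- and $(n-1)$-point functions feed in. The index sums over lowering a single $d_i$ behave differently according as $d_i=0$, $d_i=1$ or $d_i\ge2$, which is the source of the $a$-dependence; any $b=\#\{d_j=1\}$ contributions must cancel in the end, consistent with the stated formula. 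The hard part is purely bookkeeping: after substituting all inductive values one must verify that the resulting rational expression collapses to $\bigl(2g^2+(2n-1)g+\tfrac{n^2-n}{2}-3+\tfrac{5a-a^2}{2}\bigr)/(4^g\prod_j(2d_j+1)!!)$. I would organize this by first deriving a clean closed form for the needed subleading coefficient of $P_g$ (the analogue of $2T\,P_g=Q^{[3g+n-1]}$ read off at $z$-degree $2g+n-2$), and then checking the final quadratic-in-$(g,n,a)$ identity, using the low-point base cases to pin down the constant terms.
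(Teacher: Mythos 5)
This survey states Theorem \ref{nptcoeff} without proof, citing \cite{LX7}, so there is no in-paper argument to compare against; your strategy --- double induction on genus and number of points via the recursion of Proposition \ref{npt2}(i) applied to $G_g(z,x_1,\dots,x_n)$, with the splitting $\Delta(z,\mathbf{x})=z^2S+zS^2+\Delta(\mathbf{x})$ and the cleared relation $2(z+S)P_g=Q^{[3g+n-1]}$ --- is the natural one and, as far as one can tell, the route taken in \cite{LX7}. Your treatment of (i) and (ii) is essentially complete: the degree count $\deg_z\bigl(\sigma_I^2G_{r'}(x_I)\bigr)\le 2r'+|I|-1$ isolates the single configuration $r'=g$, $|I|=n$, $|J|=1$ in $Q$, and the two contributions $a$ and $2g+n-a$ do sum to $2g+n$ and cancel the prefactor. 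One small point you should make explicit in (i): passing from $\deg_z Q\le 2g+n-1$ to $\deg_z P_g\le 2g+n-2$ uses that $P_g$ is a genuine polynomial in $z$, so that multiplication by $z+S$ raises the $z$-degree by exactly one; the a priori non-polynomial factors $G_0$ on one- and two-element blocks are tamed only because they appear multiplied by the corresponding $\sigma^2$.

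The gap is part (iii), which carries the substantive content. You correctly enumerate the surviving configurations (the $z^2S$ and $zS^2$ pieces of $\Delta$ against the subleading and leading coefficients of $G_{g-1}$; and, in $Q$ at $z$-degree $2g+n-2$, the three families $|I|=n$ with subleading coefficient, $|I|=n$ with leading coefficient against the cross term of $\sigma_I^2$, and $|I|=n-1$ with $|J|=2$), and you correctly note that extracting $[z^{2g+n-3}]P_g$ from $2(z+S)P_g=Q$ also drags in $S\cdot[z^{2g+n-2}]P_g$. But the theorem asserts the exact value $\bigl(2g^2+(2n-1)g+\tfrac{n^2-n}{2}-3+\tfrac{5a-a^2}{2}\bigr)/\bigl(4^g\prod_j(2d_j+1)!!\bigr)$, including the nontrivial facts that the answer is quadratic rather than linear in $a$ and that the dependence on $b=\#\{j\mid d_j=1\}$ cancels; none of this is verified. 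Declaring it ``purely bookkeeping'' is not a proof: when a single exponent $d_i$ is lowered by $S$, the inductive input from part (iii) at genus $g-1$ is evaluated at $a+1$ if $d_i=1$ and at $a$ if $d_i\ge 2$, and this case analysis (together with the double-lowering from $S^2$) is precisely where the $\tfrac{5a-a^2}{2}$ term and the cancellation of $b$ originate --- and where errors hide. Until that computation is written out and checked against the low-point base cases, part (iii) remains unproved.
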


As an important application of the $n$-point functions, we proved
some new identities of descendent integrals, which led to a proof
of the Faber intersection number conjecture (see Section
\ref{tauto}). Some of these results (e.g. Theorem \ref{nptcoeff})
have also found applications in Zhou's important work \cite{Zh2} on
Hurwitz-Hodge integrals.

Next we give an
interesting combinatorial interpretation of $n$-point functions in
terms of summation over binary trees.

Recall that a binary tree $T$ is a tree such that each node $v\in
V(T)$ either has no children ($v\in L(T)$ is a leaf) or has two
children ($v\notin L(T)$).

Let $T$ be a binary tree. Let $n=|L(T)|$ be the number of leaves. We
assign an integer $g(v)\geq0$ to each node $v\in V(T)$ and label the
$n$ leaves with distinct values $\ell(v)\in\{1,\dots,n\}$. Then we
call such $T$ a ``weighted marked binary tree'' (abbreviated ``WMB
tree'') and call $g(T)=\sum_{v\in V(T)}g(v)$ the total weight of
$T$.

Here are all the WMB trees with $(g,n)=(2,2)$:

\begin{minipage}{0.7in}
\begin{displaymath}
  \vcenter{\tiny\xymatrix@C=1mm@R=0mm{
   &2& \\
    &  *={\bullet}\ar@{-}[ddl]\ar@{-}[ddr] &  \\
  & & \\
 *=<12pt>[o][F-]{1}  &   &
  *=<12pt>[o][F-]{2}  \\
   0&&0
         }}\,
\end{displaymath}
\end{minipage}
\begin{minipage}{0.7in}
\begin{displaymath}
  \vcenter{\tiny\xymatrix@C=1mm@R=0mm{
   &1& \\
    &  *={\bullet}\ar@{-}[ddl]\ar@{-}[ddr] &  \\
  & & \\
 *=<12pt>[o][F-]{1}  &   &
  *=<12pt>[o][F-]{2}  \\
   1&&0
         }}\,
\end{displaymath}
\end{minipage}
\begin{minipage}{0.7in}

\begin{displaymath}
  \vcenter{\tiny\xymatrix@C=1mm@R=0mm{
   &1& \\
    &  *={\bullet}\ar@{-}[ddl]\ar@{-}[ddr] &  \\
  & & \\
 *=<12pt>[o][F-]{1}  &   &
  *=<12pt>[o][F-]{2}  \\
   0&&1
         }}\,
\end{displaymath}
\end{minipage}
\begin{minipage}{0.7in}
\begin{displaymath}
  \vcenter{\tiny\xymatrix@C=1mm@R=0mm{
   &0& \\
    &  *={\bullet}\ar@{-}[ddl]\ar@{-}[ddr] &  \\
  & & \\
 *=<12pt>[o][F-]{1}  &   &
  *=<12pt>[o][F-]{2}  \\
   2&&0
         }}\,
\end{displaymath}
\end{minipage}
\begin{minipage}{0.7in}
\begin{displaymath}
  \vcenter{\tiny\xymatrix@C=1mm@R=0mm{
   &0& \\
    &  *={\bullet}\ar@{-}[ddl]\ar@{-}[ddr] &  \\
  & & \\
 *=<12pt>[o][F-]{1}  &   &
  *=<12pt>[o][F-]{2}  \\
   1&&1
         }}\,
\end{displaymath}
\end{minipage}
\begin{minipage}{0.7in}

\begin{displaymath}
  \vcenter{\tiny\xymatrix@C=1mm@R=0mm{
   &0& \\
    &  *={\bullet}\ar@{-}[ddl]\ar@{-}[ddr] &  \\
  & & \\
 *=<12pt>[o][F-]{1}  &   &
  *=<12pt>[o][F-]{2}  \\
   0&&2
         }}\,
\end{displaymath}
\end{minipage}
\smallskip

\begin{proposition} \cite{LX7}
Denote
by WMB$(g,n)$ the set of isomorphism classes of all WMB trees with
total weight $g$ and $n$ leaves. Then

\begin{multline*}
12^g\left(\prod_{j=1}^n x_j\right)\cdot(x_1+\dots+x_n)^2
F_g(x_1,\dots,x_n)\\= \sum_{T\in\ {\rm WMB}(g,n)} \prod_{v\in
V(T)}\frac{\left(|L(v)|-3+\sum\limits_{\substack{w\in D(v)\\w\neq
v}}2g(w)\right)!!}{\left(|L(v)|-1+\sum\limits_{w\in
D(v)}2g(w)\right)!!}\left(\sum_{w\in
L(v)}x_{\ell(w)}\right)^{3g(v)+1},
\end{multline*}
where $D(v)\subset V(T)$ is the set of all descendants of $v$ and
$L(v)=D(v)\cap L(T)$.
\end{proposition}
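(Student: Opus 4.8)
The plan is to prove the identity by induction on the number of variables $n$, using the recursion of Proposition \ref{npt2}(ii) to peel off the root of each tree. Write $X=x_1+\cdots+x_n$ and $X_S=\sum_{i\in S}x_i$ for $S\subseteq\underline n$, set $\Phi_g:=12^g\big(\prod_{j}x_j\big)X^2F_g$ for the left-hand side, and let $\Psi_{g,S}$ denote the tree sum on the right over WMB trees with leaf set $S$ and total weight $g$. First I would record two elementary facts. Since the leaves carry distinct labels, no WMB tree admits a nontrivial automorphism; hence isomorphism classes of WMB trees coincide with labeled such trees, and an \emph{unordered} pair of child subtrees corresponds exactly to the factor $\tfrac12\sum_{\underline n=I\coprod J}$ (over \emph{ordered} pairs) appearing in Proposition \ref{npt2}(ii), because disjoint nonempty $I,J$ are always distinct. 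Second, for a leaf $v$ one has $|L(v)|=1$ and $D(v)=\{v\}$, so its weight factors as $\tfrac{(-2)!!}{(2g(v))!!}x_{\ell(v)}^{3g(v)+1}$; I adopt the convention $(-2)!!=1$, so a single-leaf tree of weight $g$ contributes $\tfrac{1}{(2g)!!}x^{3g+1}=\tfrac{1}{2^gg!}x^{3g+1}$.

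For the base case $n=1$, the one-point function is $F_g(x)=\langle\tau_{3g-2}\rangle_g x^{3g-2}=\tfrac{x^{3g-2}}{24^gg!}$, whence $\Phi_g(x)=12^g x^3\cdot\tfrac{x^{3g-2}}{24^gg!}=\tfrac{1}{2^gg!}x^{3g+1}$, matching the single-leaf contribution. (Alternatively one may run this base case as a sub-induction on $g$ through the $n=1$ specialization of Proposition \ref{npt2}(ii), whose split term is empty, starting from $\langle\tau_1\rangle_1=\tfrac1{24}$.)

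For the inductive step ($n\ge2$) I would decompose each tree at its root $r$, of weight $g_0$, into two child subtrees on a partition $\underline n=I\coprod J$ with total weights $g_I,g_J$ and $g_0+g_I+g_J=g$. Since $L(r)=\underline n$ and the total weight of the subtree at $r$ is $g$, the root contributes $\tfrac{(n-3+2(g-g_0))!!}{(n-1+2g)!!}X^{3g_0+1}$, and the remaining product factors into the stand-alone tree values of the two subtrees (descendants of a node in one child stay in that child). Applying the inductive hypothesis $\Psi_{g',S}=12^{g'}(\prod_{i\in S}x_i)X_S^2F_{g'}(x_S)$ to both children (each having $<n$ leaves), using $\prod_{i\in I}x_i\prod_{i\in J}x_i=\prod_j x_j$ and $12^{g_I}12^{g_J}=12^{g-g_0}$, and converting the unordered pair of children to $\tfrac12\sum_{(I,J)}$, I obtain
\[
\Psi_{g,\underline n}=\Big(\prod_j x_j\Big)\sum_{g_0=0}^{g}\frac{(n-3+2G)!!}{(n-1+2g)!!}X^{3g_0+1}12^{G}\cdot\tfrac12\sum_{(I,J)}\sum_{g'=0}^{G}X_I^2X_J^2F_{g'}(x_I)F_{G-g'}(x_J),\qquad G=g-g_0.
\]
I would then invoke Proposition \ref{npt2}(ii) at genus $G$ to replace the inner split-sum by $(2G+n-1)XF_G-\tfrac1{12}X^4F_{G-1}$.

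The heart of the argument, and the step I expect to require the most care, is the resulting telescoping over the root weight $g_0$. Using $(n-1+2G)!!=(2G+n-1)(n-3+2G)!!$, the $F_G$-piece becomes $\big(\prod_j x_j\big)\sum_{g_0}\tfrac{(n-1+2G)!!}{(n-1+2g)!!}X^{3g_0+2}12^{G}F_G$, while the $F_{G-1}$-piece, after shifting the index $G\mapsto k+1$ and using $(n-3+2(k+1))!!=(n-1+2k)!!$, becomes the negative of the same expression; writing both as sums over $k$ (the genus of the surviving $F$), one checks that the power $X^{3g-3k+2}$ and the coefficient $\tfrac{(n-1+2k)!!}{(n-1+2g)!!}12^k$ agree, so all terms with $0\le k\le g-1$ cancel. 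Only the $k=g$ term of the first piece survives, giving $\big(\prod_j x_j\big)X^2\,12^g F_g=\Phi_g$, which closes the induction. The main obstacle is thus purely the double-factorial bookkeeping that forces the two pieces to telescope; once the identities $(n-1+2G)!!=(2G+n-1)(n-3+2G)!!$ and $(n-3+2(k+1))!!=(n-1+2k)!!$ are in hand, together with the convention $(-2)!!=1$ at the leaves, the cancellation is automatic.
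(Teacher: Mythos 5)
Your proof is correct and follows essentially the same route as the paper, which only sketches the argument as ``apply Proposition \ref{npt2}(ii) repeatedly until only one-point functions remain, with partitions of indices corresponding to binary trees.'' Your induction on $n$ with the telescoping sum over the root weight $g_0$ (absorbing the $\frac{1}{12}X^4F_{G-1}$ term of the recursion) is precisely the bookkeeping needed to make that sketch rigorous, and your double-factorial identities and leaf convention $(-2)!!=1$ check out.
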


The idea of proof of the above formula is simple. We apply
Proposition \ref{npt2}(ii) repeatedly, until the right hand side
contains only one-point functions. Note that partitions of indices
are in one-to-one correspondence with binary trees.

\subsection{An effective recursion formulae of descendent integrals}

In \cite{LX7}  (Proposition 5.1), we proved a recursion formula
which explicitly expresses any descendent integral in terms of those
with strictly lower genus.
\begin{proposition}
Let $d_j\geq0$ and $\sum_{j=1}^n d_j=3g+n-3$. Then
\begin{multline*}
(2g+n-1)(2g+n-2)\langle\prod_{j=1}^n\tau_{d_j}\rangle_g\\
=\frac{2d_1+3}{12}\langle\tau_0^4\tau_{d_1+1}\prod_{j=2}^n\tau_{d_j}\rangle_{g-1}-\frac{2g+n-1}{6}\langle\tau_0^3\prod_{j=1}^n\tau_{d_j}\rangle_{g-1}\\
+\sum_{\{2,\dots,n\}=I\coprod
J}(2d_1+3)\langle\tau_{d_1+1}\tau_0^2\prod_{i\in
I}\tau_{d_i}\rangle_{g'}\langle\tau_0^2\prod_{i\in
J}\tau_{d_i}\rangle_{g-g'}\\
-\sum_{\{2,\dots,n\}=I\coprod
J}(2g+n-1)\langle\tau_{d_1}\tau_0\prod_{i\in
I}\tau_{d_i}\rangle_{g'}\langle\tau_0^2\prod_{i\in
J}\tau_{d_i}\rangle_{g-g'}.
\end{multline*}
\end{proposition}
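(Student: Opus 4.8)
The plan is to derive the formula by coefficient extraction from the $n$-point function recursion of Proposition \ref{npt2}(ii), using it as the engine and then invoking the string equation \eqref{str} and the dilaton equation \eqref{dil} to produce the exact numerical coefficients. I abbreviate the identity of Proposition \ref{npt2}(ii) as $(*)$:
\begin{multline*}
(2g+n-1)\Big(\sum_{i=1}^n x_i\Big)F_g(x_1,\dots,x_n)=\frac{1}{12}\Big(\sum_{i=1}^n x_i\Big)^4 F_{g-1}(x_1,\dots,x_n)\\
+\frac12\sum_{g'=0}^g\sum_{\underline{n}=I\coprod J}\Big(\sum_{i\in I}x_i\Big)^2\Big(\sum_{i\in J}x_i\Big)^2 F_{g'}(x_I)F_{g-g'}(x_J).
\end{multline*}
The first step is to record an elementary translation lemma: multiplication by a power of the total sum of variables corresponds to inserting $\tau_0$ classes, i.e. for every $m\geq0$,
\[
\mathcal{C}\Big(\prod_{j=1}^n x_j^{d_j},\ \Big(\sum_{i=1}^n x_i\Big)^m F_g(x_1,\dots,x_n)\Big)=\langle\tau_0^m\textstyle\prod_{j=1}^n\tau_{d_j}\rangle_g .
\]
This follows by induction on $m$ from the string equation, since multiplying a generating series by $\sum_i x_i$ and then reading off a coefficient is exactly the operation that lowers one index by one and sums over the chosen index; the same statement holds verbatim for the partial sums $\sum_{i\in I}x_i$ acting on the sub-function $F_{g'}(x_I)$.

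Second, I would extract the coefficient of the monomial $x_1^{d_1+1}\prod_{j\geq2}x_j^{d_j}$ from $(*)$, thereby singling out the first variable. On the right this is immediate from the translation lemma: the quartic term produces the $\tau_0^4$ correlator of $T_1$-type, while splitting the quadratic sum according to whether the index $1$ lies in $I$ or in $J$ (the two cases being interchanged by the symmetry of the summand) cancels the factor $\tfrac12$ and yields precisely the partition sum over $\{2,\dots,n\}$ appearing in the product terms. On the left, the prefactor $\sum_i x_i$ forces one application of the string equation; reading off the same monomial, the $i=1$ term gives $\langle\prod_j\tau_{d_j}\rangle_g$ and the terms $i\geq2$ combine with it, once more by the string equation, into $\langle\tau_0\tau_{d_1+1}\prod_{j\geq2}\tau_{d_j}\rangle_g$, so that the left-hand side of the extracted identity is $(2g+n-1)\langle\tau_0\tau_{d_1+1}\prod_{j\geq2}\tau_{d_j}\rangle_g$.

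Third, the task is to convert this raw identity into the stated one. Here I would use the dilaton equation in the form $\langle\tau_1\prod_j\tau_{d_j}\rangle_g=(2g+n-2)\langle\prod_j\tau_{d_j}\rangle_g$, which is ultimately responsible for the second factor $2g+n-2$ on the left-hand side, together with repeated use of the string equation to rewrite the various $\tau_0$-augmented correlators so that the two members of each pair $(T_1,T_2)$ and $(T_3,T_4)$ combine. The weight $2d_1+3=2(d_1+1)+1$ attached to the $\tau_{d_1+1}$-terms, as opposed to the weight $2g+n-1$ attached to the $\tau_{d_1}$-terms, is the signature of this reorganization: it is the double-factorial ratio one meets when trading a string-equation reduction of $\tau_{d_1+1}$ against a dilaton reduction, in the spirit of the DVV formula \eqref{dvv}.

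The hard part will be precisely this last bookkeeping. The naive extraction of the second step produces on its left an $(n{+}1)$-point genus-$g$ correlator $\langle\tau_0\tau_{d_1+1}\prod_{j\geq2}\tau_{d_j}\rangle_g$ rather than the $n$-point correlator $\langle\prod_j\tau_{d_j}\rangle_g$ of the statement, and its string-equation expansion introduces further genus-$g$ corrections (including the $g'=g$ boundary of the partition sum) that match no single term on the right. Showing that, after the dilaton factor $2g+n-2$ is inserted, all of these reduce and recombine so as to leave exactly the four displayed terms with coefficients $\tfrac{2d_1+3}{12}$, $-\tfrac{2g+n-1}{6}$, $2d_1+3$ and $-(2g+n-1)$ is where all the care is required. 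I expect the cleanest route is to carry out the entire computation at the level of the $n$-point functions $F_g$, keeping $(*)$ and its one-variable specialization as polynomial identities and extracting coefficients only at the very end, so that the string and dilaton equations can be applied as global operations on generating series rather than term by term.
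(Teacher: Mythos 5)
Your first two steps are sound and almost certainly coincide with the intended argument: the translation lemma is a correct restatement of the string equation, and extracting the coefficient of $x_1^{d_1+1}\prod_{j\geq2}x_j^{d_j}$ from Proposition \ref{npt2}(ii) does yield
\begin{equation*}
(2g+n-1)\,\langle\tau_0\tau_{d_1+1}\prod_{j=2}^n\tau_{d_j}\rangle_g
=\frac{1}{12}\langle\tau_0^4\tau_{d_1+1}\prod_{j=2}^n\tau_{d_j}\rangle_{g-1}
+\sum_{\{2,\dots,n\}=I\coprod J}\langle\tau_{d_1+1}\tau_0^2\prod_{i\in I}\tau_{d_i}\rangle_{g'}\langle\tau_0^2\prod_{i\in J}\tau_{d_i}\rangle_{g-g'},
\end{equation*}
which, multiplied by $2d_1+3$, accounts exactly for the first and third terms of the stated formula. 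The gap is in your third step. Subtracting the above from the claimed identity, what remains to be proved is
\begin{equation*}
(2d_1+3)\langle\tau_0\tau_{d_1+1}\prod_{j=2}^n\tau_{d_j}\rangle_g-(2g+n-2)\langle\prod_{j=1}^n\tau_{d_j}\rangle_g
=\frac{1}{6}\langle\tau_0^3\prod_{j=1}^n\tau_{d_j}\rangle_{g-1}
+\sum_{\{2,\dots,n\}=I\coprod J}\langle\tau_{d_1}\tau_0\prod_{i\in I}\tau_{d_i}\rangle_{g'}\langle\tau_0^2\prod_{i\in J}\tau_{d_i}\rangle_{g-g'},
\end{equation*}
and this is not ``bookkeeping'' with the string and dilaton equations, as you suggest. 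The right-hand side here lives in total degree $3g+n-3$, one less than the terms produced by your coefficient extraction, and the string and dilaton equations neither change the genus nor create new boundary/splitting terms of the shape $\langle\tau_0^3\cdots\rangle_{g-1}$ or $\langle\tau_{d_1}\tau_0\cdots\rangle\langle\tau_0^2\cdots\rangle$; so no amount of term-by-term rewriting of your step-two output can generate them. This residual identity is a second, independent consequence of the KdV recursion (one must apply the string equation to the entire generating-series identity, or equivalently invoke the recursion a second time at a shifted monomial, before extracting coefficients), and it is precisely where the weights $2d_1+3$ versus $2g+n-1$ get separated. You explicitly flag this as ``the hard part'' and ``where all the care is required'' without supplying it, so the decisive step of the proof is missing.

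A secondary, smaller point: you should also note why the empty parts of the partition sums are harmless (e.g.\ $\langle\tau_0^2\rangle_h=0$ for all $h$ by dimension count), since Theorem \ref{npt} imposes $I,J\neq\emptyset$ while the Proposition sums over all decompositions of $\{2,\dots,n\}$ including the empty ones. This is easy but needs to be said if the coefficient-extraction route is to produce the formula exactly as stated.
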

By the string equation, we may assume all indices $d_j\geq1$; then
every non-zero descendent integral on the right hand side has genus
strictly less than $g$. So everything reduces to the following
well-known identity of genus zero intersection numbers:
$$\langle\tau_{d_1}\cdots\tau_{d_n}\rangle_0=\binom{n-3}{d_1,\dots,d_n}.$$

\vskip 30pt
\section{Hodge integrals}

In this section, we study the integrals of products of $\psi$ and
$\lambda$ classes on $\overline{\sM}_{g,n}$. Hodge integrals arise
naturally in the localization computation of Gromov-Witten theory
\cite{Ko2, GrPa}, since the Atiyah-Bott localization formula
expresses $\mathbb C^*$-equivariant classes over $\overline{\mathcal
M}_{g,n}(\mathbb P^r,d)$ in terms of summation over the fixed point
loci consisting of products of moduli spaces of marked curves, whose
summands are just the products of Hodge integrals.

\subsection{Faber's algorithm} Faber's algorithm \cite{Fa} reduces
the calculation of general Hodge integrals to those with pure $\psi$
classes.

\smallskip
\noindent {\bf Step 1: Eliminating $\kappa$ classes}

The fact that intersection numbers involving both $\kappa$ classes
and $\psi$ classes can be reduced to intersection numbers involving
only $\psi$ classes was already known to Witten \cite{Wi}, and has been
developed by Arbarello-Cornalba \cite{Ar-Co}, Faber \cite{Fa} and
Kaufmann-Manin-Zagier \cite{KMZ} into a beautiful combinatorial
formalism.

First we fix notation as in \cite{KMZ}. Consider the semigroup
$N^\infty$ of sequences ${\bold m}=(m_1,m_2,\dots)$ where $m_i$ are
nonnegative integers and $m_i=0$ for sufficiently large $i$.

Let $\bold m, \bold{a_1,\dots,a_n} \in N^\infty$, $\bold
m=\sum_{i=1}^n \bold{a_i}$.
$$|\bold m|:=\sum_{i\geq 1}i m_i,\quad ||\bold m||:=\sum_{i\geq1}m_i,\quad \binom{\bold m}{\bold{a_1,\dots,a_n}}:=\prod_{i\geq1}\binom{ m_i}{a_1(i),\dots,a_n(i)}.$$
We have the following formula \cite{KMZ} to remove $\kappa$ classes.

\begin{equation}\label{kmz}
\langle\prod_{j=1}^n\tau_{d_j}\kappa(\bold m)\Psi\rangle_g
=\sum_{k=0}^{||\bold m||}\frac{(-1)^{||\bold
m||-k}}{k!}\sum_{\substack {\bold
m=\bold{m_1}+\cdots+\bold{m_k}\\\bold{m_i}\neq\bold 0}}\binom{\bold
m}{\bold
{m_1,\dots,m_k}}\langle\prod_{j=1}^n\tau_{d_j}\prod_{j=1}^k\tau_{|\bold{m_j}|+1}\Psi\rangle_g,
\end{equation}
where $\kappa(\bold m)\triangleq \prod_{i\geq1}\kappa_i^{m(i)}$ and
$\Psi$ is the pull-back of classes from $\overline{\mathcal{M}}_{g}$
under forgetful morphisms, such as ${\rm ch}(\mathbb E)$ and
$\lambda$ classes.

\smallskip
\noindent {\bf Step 2: Substituting $\lambda$ classes by Chern
characters}

There is a universal formula to express $\lambda$ classes in terms
of ${\rm ch}(\mathbb E)$,
\begin{equation*}
\lambda_j = \sum_{\mu \vdash j}(-1)^{j-\ell(\mu)} \prod_{r \geq 1}
\frac{((r-1)!)^{m_r}}{m_r!}{\rm ch}_{\mu}({\mathbb E}), \quad j \geq
1,
\end{equation*}
where the sum ranges over all partitions $\mu$ of $j$, $\ell(\mu)$
is the length of $\mu$ and $m_r$ is the number of $r$ in $\mu$, and
${\rm ch}_{\mu}({\mathbb E})={\rm ch}_{\mu_1}(\mathbb E)\cdots {\rm
ch}_{\mu_\ell}(\mathbb E)$. Since ${\rm ch}_{2k}(\mathbb E)=0$ when
$k>0$, we may consider only partitions into odd numbers.

\smallskip
\noindent {\bf Step 3: Applying Mumford's formula}

So we arrive at the following integrals with only ${\rm ch}(\mathbb
E)$ and $\psi$ classes,
$$\int_{\overline{\mathcal{M}}_{g,n}}\psi_{1}^{d_{1}}\cdots\psi_{n}^{d_{n}}
{\rm ch}_{2k_1-1}(\mathbb E)\cdots{\rm ch}_{2k_\ell-1}(\mathbb E).$$
We will apply Mumford's formula \cite{Mu}
\begin{equation}\label{eqmu}
{\rm ch}_{2k-1}(\mathbb E)= \frac{ B_{2k}}{(2k)!}\left[
\kappa_{2k-1}-\sum_{i=1}^{n}\psi_i^{2k-1}+\frac12\sum_{\xi\in\Delta}{p_{\xi}}_*
\left(\sum_{i=0}^{2k-2}\psi_1^i(-\psi_2)^{2k-2-i}\right)\right],
\end{equation}
where $\Delta$ is the set of boundary divisors and $B_{2g}$ is the
$2g$-th Bernoulli number.

From \cite{Kn}, we know the pull-back behavior of Hodge bundles
under the two natural boundary gluing morphisms.
$$p: \overline{\mathcal M}_{g_1,n_1+1}\times\overline{\mathcal M}_{g_2,n_2+1}
\rightarrow\overline{\mathcal M}_{g_1+g_2,n_1+n_2},\quad p^*(\mathbb
E')= \mathbb E_1\oplus \mathbb E_2.$$
$$p: \overline{\mathcal M}_{g-1,n+2}\rightarrow\overline{\mathcal M}_{g,n},\quad p^*(\mathbb E')=\mathbb E\oplus\mathbb C.$$
Where $\mathbb E'$ is the Hodge bundle of the target moduli spaces and
$\mathbb C$ is the trivial line bundle.

Since ${\rm ch}(\mathbb E_1\oplus\mathbb E_2)={\rm ch}(\mathbb
E_1)+{\rm ch}(\mathbb E_2)$, we have
\begin{multline*}
\langle\prod_{i=1}^n\tau_{d_i}\prod_{i=1}^\ell{\rm
ch}_{2k_i-1}(\mathbb E)\rangle_g\\
=\frac{B_{2k_1}}{(2k_1)!!}\left(\langle\tau_{2k_1}\prod_{i=1}^n\tau_{d_i}\prod_{i=2}^\ell{\rm
ch}_{2k_i-1}(\mathbb
E)\rangle_g+\sum_{j=1}^n\langle\tau_{d_j+2k_1-1}\prod_{i\neq
j}\tau_{d_i}\prod_{i=2}^\ell{\rm ch}_{2k_i-1}(\mathbb
E)\rangle_g\right.\\
\left.+\frac12\sum_{j=0}^{2k_1-2}(-1)^j\langle\tau_j\tau_{2k_1-2-j}\prod_{i=1}^n\tau_{d_i}\prod_{i=2}^\ell{\rm
ch}_{2k_i-1}(\mathbb E)\rangle_{g-1}\right.\\
\left.+\frac12\sum_{\substack{I\coprod J=\underline{n}\\I'\coprod
J'=\{2,\dots,\ell\}}}\sum_{j=0}^{2k_1-2}(-1)^j\langle\tau_j\prod_{i\in
I}\tau_{d_i}\prod_{i\in I'}{\rm ch}_{2k_i-1}(\mathbb
E)\rangle_{g'}\langle\tau_{2k_1-2-j}\prod_{i\in
J}\tau_{d_i}\prod_{i\in J'}{\rm ch}_{2k_i-1}(\mathbb
E)\rangle_{g-g'} \right)
\end{multline*}

So we can reduce the integral to pure $\psi$ classes by induction on
the number of ${\rm ch}(\mathbb E)$. In fact, Faber's algorithm \cite{Fa} computes
more general intersection numbers, which may contain boundary divisors.

\subsection{Hodge integral formulae} There are very few
closed formulae for Hodge integrals. For example, Getzler and
Pandharipande \cite{GP} showed that the degree zero Virasoro
conjecture for $\mathbb P^1$, $\mathbb P^2$ and $\mathbb P^3$ implies
respectively the following three Hodge integral formulae:
\begin{align}
\label{lg}\langle\tau_{d_1}\cdots\tau_{d_n}\mid\lambda_g\rangle_g&=\binom{2g+n-3}{d_1,\dots,d_n}
\frac{2^{2g-1}-1}{2^{2g-1}}\frac{|B_{2g}|}{(2g)!},\\
\label{l2g}\langle\tau_{d_1}\cdots\tau_{d_n}\mid\lambda_g\lambda_{g-1}\rangle_g&=
\frac{(2g-3+n)!|B_{2g}|}{2^{2g-1}(2g)!\prod_{j=1}^{n}(2d_j-1)!!},\\
\label{l3g}\langle\lambda_{g-1}^3\rangle_g&=\frac{1}{(2g-2)!}\frac{|B_{2g-2}|}{2g-2}\frac{|B_{2g}|}{2g}.
\end{align}

We will see in Section \ref{tauto} that \eqref{l2g} is an equivalent
formulation of the Faber intersection number conjecture. The
$\lambda_g$ theorem \eqref{lg} was first proved by Faber and
Pandharipande \cite{FP2}. Goulden, Jackson and Vakil \cite{GJV2}
have a short proof using the ELSV formula. The identity \eqref{l3g} is
proved in \cite{FP1}.

Liu, Liu and Zhou \cite{LLZ2} gave a unified proof of
\eqref{lg},\eqref{l3g} as a consequence of the Mari\~{n}o-Vafa
formula.

The following Hodge integral identity is proved in our paper
\cite{LX6}.
\begin{proposition}
Let $g\geq2$, $d_j\geq1$ and $\sum_{j=1}^{n}(d_j-1)=g$. Then
\begin{multline*}
-\frac{(2g-2)!}{|B_{2g-2}|}\int_{\overline{\sM}_{g,n}}\psi_1^{d_1}\cdots\psi_n^{d_n}{\rm ch}_{2g-3}(\mathbb E)\\
=\frac{2g-2}{|B_{2g-2}|}\left(\int_{\overline{\sM}_{g,n}}\psi_1^{d_1}\cdots\psi_n^{d_n}\lambda_{g-1}\lambda_{g-2}-3\int_{\overline{\sM}_{g,n}}\psi_1^{d_1}\cdots\psi_n^{d_n}\lambda_{g-3}\lambda_{g}\right)\nonumber\\
=\frac{1}{2}\sum_{j=0}^{2g-4}(-1)^j\langle\tau_{2g-4-j}\tau_j\tau_{d_1}\cdots\tau_{d_n}\rangle_{g-1}+\frac{(2g-3+n)!}{2^{2g+1}(2g-3)!}\cdot\frac{1}{\prod_{j=1}^n(2d_j-1)!!}.
\end{multline*}
\end{proposition}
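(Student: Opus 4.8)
The plan is to establish the two equalities in the statement separately. The passage from the Chern-character integral (first line) to the pair of Hodge integrals (second line) is a matter of Chern-class algebra on the Hodge bundle $\mathbb E$, whereas the passage to the descendent expression (third line) comes from substituting Mumford's Grothendieck--Riemann--Roch formula \eqref{eqmu} and integrating. I would treat these as two independent computations sharing the same pairing against $\psi_1^{d_1}\cdots\psi_n^{d_n}$, whose total degree $\sum d_j=g+n$ is complementary to the degree $2g-3$ of every class involved, so that all three expressions are genuine top intersections on $\overline{\sM}_{g,n}$.

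For the \emph{first-to-third} equality I would put $2k-1=2g-3$, i.e.\ $k=g-1$, in \eqref{eqmu}, giving
\[{\rm ch}_{2g-3}(\mathbb E)=\frac{B_{2g-2}}{(2g-2)!}\Big[\kappa_{2g-3}-\sum_{i=1}^n\psi_i^{2g-3}+\tfrac12\sum_{\xi\in\Delta}{p_{\xi}}_*\big(\textstyle\sum_{i=0}^{2g-4}\psi_1^i(-\psi_2)^{2g-4-i}\big)\Big],\]
then multiply by $\psi_1^{d_1}\cdots\psi_n^{d_n}$ and integrate; the prefactor $-\frac{(2g-2)!}{|B_{2g-2}|}$ is designed exactly to normalize $\frac{B_{2g-2}}{(2g-2)!}$ to a unit. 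The non-separating gluing morphism $\overline{\sM}_{g-1,n+2}\to\overline{\sM}_{g,n}$ pulls each $\psi_i$ back to $\psi_i$, so by the projection formula its contribution is $\tfrac12\sum_{j=0}^{2g-4}(-1)^j\langle\tau_{2g-4-j}\tau_j\prod_i\tau_{d_i}\rangle_{g-1}$, which is precisely the descendent sum of the third line (the prefactor being chosen to fix the coefficient at $\tfrac12$). It then remains to show that the three \emph{genus-$g$} contributions — the $\kappa_{2g-3}$ term, rewritten by \eqref{kmz} as $\langle\tau_{2g-2}\prod\tau_{d_j}\rangle_g$, the terms $-\sum_i\langle\cdots\tau_{d_i+2g-3}\cdots\rangle_g$, and the separating boundary terms — collapse into the single rational number $\frac{(2g-3+n)!}{2^{2g+1}(2g-3)!\,\prod_j(2d_j-1)!!}$. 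Here I would expand $\langle\tau_{2g-2}\prod\tau_{d_j}\rangle_g$ by the DVV relation \eqref{dvv} with $k=2g-3$ and simplify using the string and dilaton equations \eqref{str},\eqref{dil}; the remaining genus-$g$ descendents must then cancel identically, and the surviving top coefficient is read off from Theorem \ref{nptcoeff}, whose normalized $n$-point coefficients carry exactly the double-factorial signature $\prod_j(2d_j-1)!!^{-1}$.

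For the \emph{first-to-second} equality I would pass between the Chern character and the Chern classes $\lambda_i=c_i(\mathbb E)$. With $C(t)=\sum_i\lambda_i t^i=\prod_\ell(1+x_\ell t)$ for the Chern roots, Mumford's relation $c(\mathbb E)c(\mathbb E^\vee)=1$ (equivalently ${\rm ch}_{2k}(\mathbb E)=0$ for $k\geq1$) gives $C(t)C(-t)=1$, so $\log C(t)=\sum_{k\geq0}(2k)!\,{\rm ch}_{2k+1}(\mathbb E)\,t^{2k+1}$ is odd and ${\rm ch}_{2g-3}(\mathbb E)$ is a universal degree-$(2g-3)$ polynomial in $\lambda_1,\dots,\lambda_g$. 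The degree-$(2g-3)$ part of $C(t)C(-t)=1$ yields the homogeneous relation $\sum_i(-1)^i\lambda_i\lambda_{2g-3-i}=0$ among the products $\lambda_i\lambda_{2g-3-i}$, which in particular links $\lambda_{g-1}\lambda_{g-2}$ and $\lambda_{g-3}\lambda_g$. Pairing with $\psi_1^{d_1}\cdots\psi_n^{d_n}$ and invoking the Faber--Pandharipande vanishing of $\lambda_g\lambda_{g-1}$-type integrals off compact type (together with the constraint $\sum_j(d_j-1)=g$), I would show that every $\lambda$-monomial other than the two named products integrates to zero, and that the residual coefficients $1$ and $-3$, together with the scalar $2g-2$, are forced by the combinatorics of $\log C$ in this single codegree.

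The main obstacle in both halves is exact coefficient bookkeeping rather than any conceptual step. In the first-to-third passage the delicate point is that the genus-$g$ descendents produced by the $\kappa$ term, the $\psi^{2g-3}$ terms, and the DVV expansion must cancel \emph{completely}, leaving only the genus-$(g-1)$ sum and one rational number; verifying this cancellation and identifying the survivor with $\frac{(2g-3+n)!}{2^{2g+1}(2g-3)!\,\prod(2d_j-1)!!}$ is exactly where the degree constraint $\sum_j(d_j-1)=g$ and the precise formulas of Theorem \ref{nptcoeff} become indispensable, and where the overall sign (from the Bernoulli sign of $B_{2g-2}$ against the prefactor) must be tracked with care. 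In the first-to-second passage the corresponding obstacle is to prove that, after integration against $\psi^d$, all but the two displayed $\lambda$-products vanish and that the two survivors appear with coefficients exactly $1$ and $-3$; this is the genuinely computational heart of the argument.
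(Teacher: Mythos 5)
Your overall architecture matches the paper's: the third line is obtained by substituting Mumford's formula \eqref{eqmu} with $k=g-1$ and identifying the non-separating boundary contribution, and the residual identity to be proved is exactly the one the paper displays right after the proposition. However, the mechanism you propose for that residual identity does not work as described. Expanding $\langle\tau_{2g-2}\prod_j\tau_{d_j}\rangle_g$ by DVV with $k=2g-3$ produces the terms $\langle\tau_{d_j+2g-3}\prod_{i\neq j}\tau_{d_i}\rangle_g$ with coefficients $\frac{(2(2g-3)+2d_j+1)!!}{(4g-3)!!\,(2d_j-1)!!}$ and boundary terms weighted by double factorials $(2r+1)!!(2s+1)!!$; these do not match the coefficient $-1$ and the alternating signs $(-1)^j$ with which the same terms appear in the identity, so the genus-$g$ descendents do \emph{not} ``cancel identically.'' The paper's actual route is to extend the sum over $j$ to all of $\mathbb Z$ using the virtual genus-zero values $\langle\tau_{-2}\rangle_0=1$, $\langle\tau_k\tau_{-1-k}\rangle_0=(-1)^k$, so that the genus-$g$ terms are absorbed into the bilinear sum, which is then recognized as the coefficient of $y^{2g-4}\prod_i x_i^{d_i}$ in $\sum_{g'}\sum_{\underline{n}=I\coprod J}F_{g'}(y,x_I)F_{g-g'}(-y,x_J)$ and evaluated by induction via Proposition \ref{npt2}(ii). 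Theorem \ref{nptcoeff}, which concerns coefficients of a single normalized $n$-point function, is an ingredient of that induction but cannot by itself deliver the bilinear coefficient you need.

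The first-to-second equality has a more basic problem: your key relation, the degree-$(2g-3)$ part of $C(t)C(-t)=1$, is vacuous. Since $2g-3$ is odd, in $\sum_{i+j=2g-3}(-1)^j\lambda_i\lambda_j$ the terms indexed by $(i,j)$ and $(j,i)$ cancel in pairs, so the relation reads $0=0$ and links nothing; only the even-degree parts of $C(t)C(-t)=1$ carry information. Consequently you have no mechanism forcing the expansion of $-(2g-3)!\,{\rm ch}_{2g-3}(\mathbb E)$ in $\lambda$-monomials to collapse, after pairing with $\psi_1^{d_1}\cdots\psi_n^{d_n}$, onto exactly $\lambda_{g-1}\lambda_{g-2}-3\lambda_{g-3}\lambda_g$. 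The claim that ``every $\lambda$-monomial other than the two named products integrates to zero'' is not a consequence of the Faber--Pandharipande vanishing of $\lambda_g\lambda_{g-1}$ off compact type (there is no reason, from the ingredients you cite, for, say, $\int\psi^{d}\lambda_1^{2g-3}$ to vanish). What is actually needed — and what is done in \cite{LX6} — is to invert the relation $\lambda_j=[\exp(\sum_{k\ \mathrm{odd}}(k-1)!\,{\rm ch}_k)]_j$ so that $\lambda_{g-1}\lambda_{g-2}-3\lambda_g\lambda_{g-3}$ becomes $-(2g-3)!\,{\rm ch}_{2g-3}$ plus products of at least three odd Chern characters, and then to prove a separate vanishing statement for integrals of such products against $\psi^{d}$ in this degree. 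That vanishing input is entirely absent from your proposal.
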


In fact, from Mumford's formula \eqref{eqmu}, the above Hodge
integral is equivalent to the following identity:
\begin{multline*}
\frac{(2g-3+n)!}{2^{2g+1}(2g-3)!\prod_{j=1}^n(2d_j-1)!!}
=\langle\tau_{2g-2}\prod_{j=1}^n\tau_{d_j}\rangle_g-\sum_{j=1}^{n}
\langle\tau_{d_j+2g-3}\prod_{i\neq j}\tau_{d_i}\rangle_g\\
+\frac{1}{2}\sum_{\underline{n}=I\coprod
J}\sum_{j=0}^{2g-4}(-1)^j\langle\tau_{j}\prod_{i\in
I}\tau_{d_i}\rangle_{g'}\langle\tau_{2g-4-j}\prod_{i\in
J}\tau_{d_i}\rangle_{g-g'},
\end{multline*}
which can be proved by packing the right-hand side as coefficients
of $n$-point functions; see discussions in Section \ref{tauto}.

The remarkable ELSV formula of Ekedahl, Lando, Shapiro, and
Vainshtein \cite{ELSV} relates single Hurwitz numbers to
intersection theory on the moduli space of curves.

\begin{theorem}(ELSV formula) Let $n=l(\mu)$ and $r=2g-2+|\mu|+ n$. Then
\begin{equation}\label{elsv} H_{g,\mu}= r! \prod_{i=1}^n \left(
\frac{ \mu_i^{\mu_i}} { \mu_i!} \right) \int_{\overline{\sM}_{g,n}}
\frac { 1 - \lambda_1 + \cdots + (-1)^g \lambda_g} { ( 1 - \mu_1
\psi_1) \cdots (1 - \mu_n \psi_n)},
\end{equation}
\end{theorem}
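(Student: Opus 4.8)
The plan is to prove the ELSV formula by equivariant virtual localization on a moduli space of relative stable maps to $\mathbb{P}^1$, in the spirit of the later proof of Graber and Vakil; at the end I indicate how the original geometric argument of \cite{ELSV} proceeds instead. First I would set up the intersection-theoretic reformulation. Fix $d=|\mu|$ and let $\overline{\sM}_g(\mathbb{P}^1,\mu)$ be the moduli space of relative stable maps of degree $d$ from genus $g$ curves to $\mathbb{P}^1$ with ramification profile $\mu$ over $\infty$, where the $n=l(\mu)$ preimages of $\infty$ are marked. Riemann--Hurwitz forces exactly $r=2g-2+|\mu|+n$ further simple branch points, so this space carries a virtual class of dimension $r$ together with a branch morphism $\mathrm{br}$ to $\mathrm{Sym}^r(\mathbb{A}^1)\cong\mathbb{A}^r$ recording their locations. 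The Hurwitz number is then the virtual degree of $\mathrm{br}$, i.e. $H_{g,\mu}=\int_{[\overline{\sM}_g(\mathbb{P}^1,\mu)]^{\mathrm{vir}}}\mathrm{br}^*(\mathrm{pt})$.

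Next I would introduce the $T=\mathbb{C}^*$ action on $\mathbb{P}^1$ fixing $0$ and $\infty$ and apply the virtual localization theorem, letting all $r$ branch points collide at the fixed point $0$ so that the class $\mathrm{br}^*(\mathrm{pt})$ is concentrated on the $T$-fixed loci. These loci are indexed by decorated graphs: curve components contracted over $0$ or $\infty$, joined by rational ``edge'' components that are totally ramified covers of $\mathbb{P}^1$. The distinguished locus producing the formula is the one in which a genus $g$ curve carrying $n$ markings is contracted over $\infty$, the $i$-th marking bearing an edge that is a degree $\mu_i$ fully ramified cover; this locus is a gerbe over $\overline{\sM}_{g,n}$.

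The heart of the computation is the equivariant Euler class of the virtual normal bundle of this locus, whose three sources are: (a) the moving part of $H^1(C,\mathcal{O}_C)$ of the contracted curve, which through the normalization sequence for the Hodge bundle produces the numerator $\sum_{i=0}^g(-1)^i\lambda_i$, the total Chern class of $\mathbb{E}^\vee$; (b) the smoothing of the node where the $i$-th edge meets the contracted curve, whose deformation space carries weight (edge weight) $-\mu_i\psi_i$ and contributes the factor $1/(1-\mu_i\psi_i)$ after normalizing by the equivariant parameter; and (c) the automorphisms and deformations of the edge covers together with the node-smoothing at $0$, which assemble into the scalar prefactor $r!\prod_{i}\mu_i^{\mu_i}/\mu_i!$, the $\mu_i^{\mu_i}$ being the edge weight, the $1/\mu_i!$ an automorphism factor, and the $r!$ coming from ordering the colliding branch points. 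Taking the nonequivariant limit then yields exactly the right-hand side of \eqref{elsv}.

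The main obstacle is twofold. First, I must verify that every other fixed locus contributes zero: those with positive genus or extra structure over $0$, or with nontrivially contracted components there, should be killed by a dimension and degree count showing $\mathrm{br}^*(\mathrm{pt})$ cannot be supported on them. Second, the bookkeeping of automorphisms and of the gerbe structure over $\overline{\sM}_{g,n}$ must be done with care so that the combinatorial prefactor emerges precisely as stated; the delicate node-weight analysis in (b)--(c) is where sign and normalization errors are easiest to make. As a cross-check and an independent route, the original proof of \cite{ELSV} avoids stable maps: it compactifies the Hurwitz space by admissible covers, builds the forgetful map to $\overline{\sM}_{g,n}$, and extracts the Hodge classes from a Grothendieck--Riemann--Roch computation for the pushforward of the relative dualizing sheaf, in the spirit of Mumford's formula \eqref{eqmu}. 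Matching the two computations would give strong confidence in the normal-bundle weights.
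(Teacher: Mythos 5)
The paper does not actually prove this theorem: the ELSV formula is quoted from \cite{ELSV}, with only a one-sentence description of the original argument (computing the degree of the Lyashko--Looijenga map as a top Segre class of the completed Hurwitz space, viewed as a cone over $\overline{\sM}_{g,n}$) and a pointer to the localization proofs \cite{GV, LLZ1}. Your proposal follows the second of these routes, the Graber--Vakil relative virtual localization, and as an outline it is faithful to that proof: the reformulation of $H_{g,\mu}$ as the virtual degree of the branch morphism, the $\mathbb{C}^*$-action with the point class lifted so that all branch points sit over $0$, the distinguished fixed locus fibered over $\overline{\sM}_{g,n}$, and the three sources of the integrand (Hodge bundle of the contracted curve, node smoothings, edge contributions) are exactly the ingredients of \cite{GV}. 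Compared with the original proof that the paper alludes to, the localization route trades the Segre-class/GRR computation on the space of admissible covers for a weight computation on fixed loci; both are legitimate, and your closing remark about cross-checking them is sensible.

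That said, what you have written is a roadmap rather than a proof, and the two ``obstacles'' you flag are precisely the mathematical content: (1) the vanishing of all other fixed loci is not a routine dimension count --- it depends on the Fantechi--Pandharipande construction of the branch morphism on the virtual class and on the specific equivariant lift chosen, and in \cite{GV} this is where most of the work lies; (2) the weight bookkeeping as you state it contains an imprecision: the deformation space smoothing the node where the $i$-th edge meets the contracted curve has equivariant Euler class $t/\mu_i-\psi_i$ (tangent weight of the edge at the node plus the tangent line of the contracted component), not ``weight $-\mu_i\psi_i$''; inverting gives $\mu_i/(1-\mu_i\psi_i)$, and the extra factor $\mu_i$ must be absorbed into the edge prefactor to land on $\mu_i^{\mu_i}/\mu_i!$ rather than $\mu_i^{\mu_i-1}/\mu_i!$. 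Since the paper itself supplies no proof to compare against, the fair verdict is that your plan is the standard and correct one, but it would need both of these points carried out in full before it constitutes a proof of \eqref{elsv}.
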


The ELSV formula was originally proved by studying the
degree of the Lyashko-Looijenga mapping, which can be expressed
in terms of the top Segre class of the completed Hurwitz space,
regarded as a cone over $\overline{\sM}_{g,n}$. The ELSV formula is more succinctly recovered
using virtual localization on moduli spaces of relative stable
morphisms \cite{GV, LLZ1}. It can also be derived as a limit of the
Mari\~no-Vafa formula \cite{LLZ2, Liu}.

A key step in Kazarian-Lando's proof \cite{KL} of the
Witten-Kontsevich theorem is to invert ELSV and eliminate $\lambda$
classes. So assertions about descendent integrals may be proved by
studying Hurwitz numbers.

\begin{proposition}(Kazarian-Lando) \cite{KL}\label{kl}
Let $\sum_{i=1}^n d_i=3g-3+n$. Then
$$
\langle\tau_{d_1}\cdots\tau_{d_n}\rangle_g=
\sum_{\mu_1=1}^{d_1+1}\dots \sum_{\mu_n=1}^{d_n+1}\left(\frac{|{\rm
Aut}(\mu)|}{(2g-2+|\mu|+n)!}
\prod_{i=1}^n\frac{(-1)^{d_i+1-\mu_i}}{(d_i+1-\mu_i)!\mu_i^{\mu_i-1}}\right)
H_{g,\mu}.
$$
\end{proposition}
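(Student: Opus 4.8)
The plan is to invert the ELSV formula \eqref{elsv} to express the descendent integral $\langle\tau_{d_1}\cdots\tau_{d_n}\rangle_g$ in terms of the Hurwitz numbers $H_{g,\mu}$. The key observation is that the right-hand side of \eqref{elsv} is, for each fixed $g$, a polynomial in the parts $\mu_1,\dots,\mu_n$ after one strips off the combinatorial prefactor; indeed, expanding each factor $(1-\mu_i\psi_i)^{-1}$ as a geometric series and using the degree constraint $\sum d_i = 3g-3+n$, only the $\lambda_0=1$ term survives at top $\psi$-degree (the $\lambda_k$ contributions lower the $\psi$-degree and are multiplied by positive powers of the $\mu_i$), so the $\psi$-intersection numbers appear precisely as the coefficients of the top-degree monomials in the $\mu_i$. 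The strategy is therefore a Möbius-type inversion: I would set $\widetilde{H}_{g,\mu}:=\dfrac{H_{g,\mu}}{(2g-2+|\mu|+n)!}\prod_{i=1}^n\dfrac{\mu_i!}{\mu_i^{\mu_i}}$, recognize the ELSV formula as stating that $\widetilde{H}_{g,\mu}$ equals the polynomial $\int_{\overline{\sM}_{g,n}}\frac{1-\lambda_1+\cdots+(-1)^g\lambda_g}{\prod(1-\mu_i\psi_i)}$ evaluated at the integer point $(\mu_1,\dots,\mu_n)$, and then recover the top descendent coefficients by a finite-difference inversion.

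Concretely, first I would fix $g$ and regard both sides of the (suitably normalized) ELSV formula as functions of the vector $(\mu_1,\dots,\mu_n)\in\mathbb Z_{\geq1}^n$. Second, I would isolate the $\lambda_0$ part: writing the integrand's expansion, the coefficient of $\prod_i\mu_i^{d_i}$ with $\sum d_i=3g-3+n$ is exactly $\langle\tau_{d_1}\cdots\tau_{d_n}\rangle_g$, while the $\lambda_k$ terms for $k\geq1$ contribute only to monomials of total $\mu$-degree strictly less than $3g-3+n+?$—more precisely they shift the balance so that they do not interfere with recovering the pure-$\psi$ coefficients once the correct finite-difference operator is applied. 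Third, I would apply the standard inversion for extracting polynomial coefficients from values at consecutive integers, namely the iterated forward difference / Stirling-type inversion, to write $\langle\tau_{d_1}\cdots\tau_{d_n}\rangle_g$ as the alternating sum $\sum_{\mu_i=1}^{d_i+1}\prod_i\frac{(-1)^{d_i+1-\mu_i}}{(d_i+1-\mu_i)!}\,c_\mu$ of the normalized values, where the inner factors $\frac{1}{\mu_i^{\mu_i-1}}$ and the automorphism factor $|\mathrm{Aut}(\mu)|$ arise from undoing the normalization $\widetilde H$ and from the symmetrization over equal parts.

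The main obstacle I anticipate is justifying that the alternating finite-difference sum annihilates all the unwanted contributions—both the lower-degree monomials coming from the $\lambda_{\geq1}$ terms in the ELSV integrand and the spurious lower-degree polynomial pieces—so that only the single top-degree coefficient $\langle\tau_{d_1}\cdots\tau_{d_n}\rangle_g$ survives. This is essentially the claim that the operator $\prod_i\sum_{\mu_i=1}^{d_i+1}\frac{(-1)^{d_i+1-\mu_i}}{(d_i+1-\mu_i)!\,\mu_i^{\mu_i-1}}$ acts on a polynomial of bounded degree in the $\mu_i$ and extracts exactly the leading coefficient while killing everything below it; I would verify this by checking the vanishing of the finite differences on monomials of too-low degree and matching the normalization constants. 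The bookkeeping of the factorials, the $\mu_i^{\mu_i-1}$ weights, and the $|\mathrm{Aut}(\mu)|$ factor is routine but delicate, and getting every constant correct is where the real care is required; everything else follows from the polynomiality built into the ELSV formula.
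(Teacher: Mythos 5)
Your proposal is correct and is exactly the argument behind the cited result: the paper itself offers no proof (it simply quotes Kazarian--Lando and remarks that their key step is to ``invert ELSV''), and that inversion is precisely the finite-difference extraction you describe, where the operator $\prod_i \frac{1}{d_i!}\Delta_i^{d_i}$ annihilates every monomial $\prod_i\mu_i^{b_i}$ of total degree $\le \sum_i d_i$ unless $b_i=d_i$ for all $i$, which is what disposes of the $\lambda_{\ge 1}$ contributions you were worried about. The remaining bookkeeping also closes up cleanly: combining the ELSV weight $\mu_i^{\mu_i}/\mu_i!$ with your factor $\mu_i^{-(\mu_i-1)}$ gives exactly the difference coefficient $1/(\mu_i-1)!$ at the nodes $\mu_i=1,\dots,d_i+1$, and $|\mathrm{Aut}(\mu)|$ enters only as the normalization mismatch between the Hurwitz-number conventions in \eqref{elsv} and in the proposition.
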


In \cite {GJV1}, Goulden, Jackson and Vakil proposed a conjectural ELSV-type
formula expressing one-part double Hurwitz numbers in terms of
intersection theory on some compactified universal Picard variety. Recently, D. Zvonkine
has proposed a conjectural ELSV-type formula on moduli spaces of $r$-spin curves.

\vskip 30pt
\section{Higher Weil-Petersson volumes}
For $\bold b\in N^\infty$, we denote by $V_{g,n}(\bold b)$ the
higher Weil-Petersson volume $$V_{g,n}(\bold b):= \langle\tau_0^n
\kappa(\bold b)\rangle_g=\int_{\overline{\sM}_{g,n}}\kappa(\bold
b).$$ Also we write $V_g(\bold b)$ instead of $V_{g,0}(\bold b)$. If
$\bold b=(3g-3+n,0,0,\dots)$, we get the classical Weil-Petersson
volumes.

Higher Weil-Petersson volumes were extensively studied in the paper
\cite{KMZ}. In particular, they obtained a closed formula for
$V_{0,n}(\bold b)$.

\subsection{Generalization of Mirzakhani's recursion formula}
In a series of innovative papers \cite{Mir,Mir2}, Mirzakhani
utilizes hyperbolic geometry to obtain a beautiful recursion formula
of the Weil-Petersson volumes of the moduli spaces of bordered
Riemann surfaces. By taking derivatives in Mirzakhani's recursion,
Mulase and Safnuk \cite{MS} obtained a differential form of
Mirzakhani's recursion formula involving integrals of $\kappa_1$ and
$\psi$ classes on moduli spaces of curves, which is immediately seen
to imply the DVV formula \eqref{dvv}.

Wolpert's formula \cite{Wo} tells us that
$$\kappa_1=\frac{1}{2\pi^2}\omega_{WP},$$
where $\omega_{WP}$ is the Weil-Petersson K\"ahler form. However,
Wolpert's formula has no counterpart for higher degree $\kappa$
classes.

In the papers \cite{LX2, LX3}, we have proved that the Mulase-Safnuk form of Mirzakhani's
recursion formula is in fact equivalent to the Witten-Kontsevich
theorem. The proof can be generalized to prove an analogue of  the Mulase-Safnuk form of Mirzakhani's
recursion containing arbitrary higher degree $\kappa$ classes.

\begin{theorem}\cite{LX2} \label{wp1} Let
$\bold b\in N^\infty$ and $d_j\geq 0$. Then
\begin{multline}\label{eqwp}
(2d_1+1)!!\langle\kappa(\bold b)\tau_{d_1}\cdots\tau_{d_n}\rangle_g\\
=\sum_{j=2}^n\sum_{\bold L+\bold{L'}=\bold b}\alpha_{\bold
L}\binom{\bold b}{\bold L}\frac{(2(|\bold
L|+d_1+d_j)-1)!!}{(2d_j-1)!!}\langle\kappa(\bold{L'})\tau_{|\bold
L|+d_1+d_j-1}\prod_{i\neq
1,j}\tau_{d_i}\rangle_g\\
+\frac{1}{2}\sum_{\bold L+\bold{L'}=\bold b}\sum_{r+s=|\bold
L|+d_1-2}\alpha_{\bold
L}\binom{\bold b}{\bold L}(2r+1)!!(2s+1)!!\langle\kappa(\bold{L'})\tau_r\tau_s\prod_{i=2}^n\tau_{d_i}\rangle_{g-1}\\
+\frac{1}{2}\sum_{\substack{\bold L+\bold{e}+\bold{f}=\bold
b\\I\coprod J=\{2,\dots,n\}}}\sum_{r+s=|\bold L|+d_1-2}\alpha_{\bold
L}\binom{\bold b}{\bold
L,\bold{e},\bold{f}}(2r+1)!!(2s+1)!!\\
\times \langle\kappa(\bold{e})\tau_r\prod_{i\in
I}\tau_{d_i}\rangle_{g'}\langle\kappa(\bold{f})\tau_s\prod_{i\in
J}\tau_{d_i}\rangle_{g-g'},
\end{multline}
where the constants $\alpha_{\bold L}$ are determined recursively
from the following formula
$$\sum_{\bold L+\bold{L'}=\bold b}\frac{(-1)^{||\bold L||}\alpha_{\bold L}}{\bold L!\bold{L'}!(2|\bold{L'}|+1)!!}=0,\qquad \bold b\neq0,$$
namely
$$\alpha_{\bold b}=\bold b!\sum_{\substack{\bold L+\bold{L'}=\bold b\\ \bold{L'}\neq\bold 0}}\frac{(-1)^{||\bold L'||-1}\alpha_{\bold L}}{\bold L!\bold{L'}!(2|\bold{L'}|+1)!!},\qquad\bold b\neq0,$$
with the initial value $\alpha_{\bold 0}=1$.
\end{theorem}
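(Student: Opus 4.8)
The plan is to deduce \eqref{eqwp} from the Witten--Kontsevich theorem in its DVV form \eqref{dvv}, following the strategy we used in \cite{LX2, LX3} to treat the Mulase--Safnuk recursion (the special case in which only $\kappa_1$ occurs). The key observation is that both sides of \eqref{eqwp} can be written entirely in terms of pure descendent integrals via the Kaufmann--Manin--Zagier formula \eqref{kmz}, and that the three groups of terms on the right-hand side --- the ``join'' sum over $j\neq 1$, the nonseparating genus-drop term, and the separating term --- are the images under this translation of the three corresponding groups of terms in \eqref{dvv}. Thus the entire identity is a reformulation, repackaged in the $\kappa$ basis, of a single application of the Virasoro constraint $L_k$ to a descendent integral.

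First I would use \eqref{kmz} to expand $\langle\kappa(\bold b)\tau_{d_1}\cdots\tau_{d_n}\rangle_g$ as a signed sum, over the ways of writing $\bold b=\bold{m_1}+\cdots+\bold{m_k}$, of pure $\psi$-class integrals $\langle\prod_{j}\tau_{d_j}\prod_{i}\tau_{|\bold{m_i}|+1}\rangle_g$ with multinomial weights $\binom{\bold b}{\bold{m_1},\dots,\bold{m_k}}$. I then apply the DVV formula \eqref{dvv} to the distinguished point carrying $\tau_{d_1}$, writing $d_1=k+1$ in the notation of \eqref{dvv}; each of the newly created points $\tau_{|\bold{m_i}|+1}$ is a spectator that either gets absorbed into the ``join'' term (raising some $d_j$), passes to the genus-$(g-1)$ term, or is distributed to one of the two factors in the separating term. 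Tracking how the double factorials $(2k+3)!!$, $(2d_j+1)!!$ and $(2r+1)!!(2s+1)!!$ of \eqref{dvv} act on these spectator points produces the factors $(2(|\bold L|+d_1+d_j)-1)!!/(2d_j-1)!!$ and the sums over $r+s=|\bold L|+d_1-2$ appearing in \eqref{eqwp}, the extra $|\bold L|$ recording the total index carried by the reabsorbed spectators.

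The last and most delicate step is to repackage the surviving spectator descendent points back into $\kappa$ classes. This is the inverse of \eqref{kmz}: a cluster of points carrying $\psi$ powers is resummed into a single $\kappa(\bold{L'})$ together with one surviving $\tau$ insertion, and the coefficients of this resummation are forced to be the numbers $\alpha_{\bold L}$. Concretely, a spectator point produced from the $\kappa$-to-$\psi$ translation has the form $\tau_{|\bold{L'}|+1}$, whose normalization in \eqref{dvv} contributes exactly $1/(2|\bold{L'}|+1)!!$; encoding \eqref{kmz} as convolution against the kernel $1/\bigl(\bold{L'}!(2|\bold{L'}|+1)!!\bigr)$, the reconstruction requires the convolution-inverse series, and the defining relation $\sum_{\bold L+\bold{L'}=\bold b}(-1)^{||\bold L||}\alpha_{\bold L}/\bigl(\bold L!\bold{L'}!(2|\bold{L'}|+1)!!\bigr)=0$ is precisely the statement that $\sum_{\bold L}(-1)^{||\bold L||}\alpha_{\bold L}\,\bold t^{\bold L}/\bold L!$ is reciprocal to this kernel, with $\alpha_{\bold 0}=1$ fixing the normalization.

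I expect the main obstacle to be the bookkeeping of coefficients in this back-and-forth passage between the $\kappa$ and $\psi$ descriptions. Both \eqref{kmz} and its inverse carry multinomial and double-factorial weights, and \eqref{dvv} inserts further $(2d+1)!!$ factors, so the nontrivial point is to verify that all of these combine into exactly the clean weights $\alpha_{\bold L}\binom{\bold b}{\bold L}$ and $\alpha_{\bold L}\binom{\bold b}{\bold L,\bold e,\bold f}$ displayed in \eqref{eqwp}, with no residual terms surviving. I would organize this verification at the level of the generating series $\exp\bigl(\sum_i s_i\kappa_i\bigr)$, so that the cancellation reduces to the single reciprocity identity defining $\alpha_{\bold b}$; once that identity is isolated, the remaining manipulations are the routine translation already carried out for $\kappa_1$ in \cite{LX2, LX3}.
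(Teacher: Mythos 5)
Your proposal matches the paper's proof in all essentials: the paper likewise establishes \eqref{eqwp} by combining the Kaufmann--Manin--Zagier expansion \eqref{kmz}, the DVV formula \eqref{dvv} applied at the point carrying $\tau_{d_1}$, and the convolution-inverse relation defining the constants $\alpha_{\bold L}$. The only cosmetic difference is the order of operations --- the paper first transfers the inversion of the $\alpha_{\bold L}$ to the left-hand side and then applies \eqref{kmz} and \eqref{dvv}, whereas you expand first and resum into $\kappa$ classes at the end --- which is the same reciprocity identity used in the opposite direction.
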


Denoting $\alpha(\ell,0,0,\dots)$ by $\alpha_\ell$, we recover
Mirzakhani's recursion formula with
$$\alpha_\ell=l!\beta_\ell=(-1)^{\ell-1}(2^{2\ell}-2)\frac{B_{2\ell}}{(2\ell-1)!!}.$$
We also have
$$\alpha(\bm{\delta}_\ell)=\frac{1}{(2\ell+1)!!},$$
where $\bm{\delta}_\ell$ denotes the sequence with $1$ at the
$\ell$-th place and zeros elsewhere.

Note that Theorems \ref{wp1} holds only for $n\geq 1$. If $n=0$,
i.e. for higher Weil-Petersson volumes of $\overline{\mathcal M}_g$,
we may apply the following formula first, which is a special case of
Proposition 3.1 of the paper \cite{LX2}.

\begin{equation}
\langle\kappa(\bold b)\rangle_g=\frac{1}{2g-2}\sum_{\bold L+\bold
L'=\bold b}(-1)^{||\bold L||}\binom{\bold b}{\bold
L}\langle\tau_{|\bold L|+1}\kappa(\bold L')\rangle_g.
\end{equation}

So we can use Theorems \ref{wp1} to compute any intersection numbers
of $\psi$ and $\kappa$ classes recursively with the three initial
values
$$\langle\tau_0\kappa_1\rangle_1=\frac{1}{24},\qquad
\langle\tau_0^3\rangle_0=1,\qquad
\langle\tau_1\rangle_1=\frac{1}{24}.$$

The idea to look at the reciprocal of $\alpha_{\bold L}$ was
inspired by the work of Mulase and Safnuk \cite{MS}, where they
considered the reciprocal of $\alpha_{\ell}$.

For the proof of Theorem \ref{wp1}, we first transfer the inversion of constants
$\alpha_{\bold L}$ to the left hand side of \eqref{eqwp}, then we
carry out the computation by applying the formula \eqref{kmz} and
conclude the result from the DVV formula \eqref{dvv}.

\subsection{Recursion formulae of higher Weil-Petersson volumes}

Although there are recursion formulae for higher Weil-Petersson
volumes in genus zero \cite{KMZ, Zo}, it seems difficult to
generalize the methods of these papers to deduce explicit recursion
formulae between $V_{g,n}(\bold b)$ valid in all genera. We remark
that Zograf's elegant algorithm \cite{Zo2} for computing
Weil-Petersson volumes seems also not easy to generalize to higher
degree $\kappa$ classes.

The following recursion formulae are proved in
\cite{LX2}.

\begin{proposition}\cite{LX2} \label{wp2}
Let $\bold b\in N^\infty$ and $n\geq 1$. Then
\begin{multline*}
\big(2g-1+||\bold b||\big)V_{g,n}(\bold
b)=\frac{1}{12}V_{g-1,n+3}(\bold b)-\sum_{\substack{\bold L+\bold{L'}=\bold b\\
||\bold{L'}||\geq 2}}\binom{\bold b}{\bold L}V_{g,n}(\bold
L+\bm\delta_{|\bold{L'}|})\\
+\frac{1}{2}\sum_{\substack{\bold L+\bold{L'}=\bold b\\
\bold{L}\neq\bold 0, \bold{L'}\neq\bold
0}}\sum_{r+s=n-1}\binom{\bold b}{\bold
L}\binom{n-1}{r}V_{g',r+2}(\bold L)V_{g-g',s+2}(\bold {L'}).
\end{multline*}
\end{proposition}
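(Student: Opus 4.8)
The plan is to obtain Proposition~\ref{wp2} as the pure $\kappa$-class incarnation of the integrated first KdV equation. The starting point is the identity established in the discussion around \eqref{kdv2} (with $H=0$),
$$\langle\langle\tau_0\tau_1\rangle\rangle=\frac{1}{12}\langle\langle\tau_0^4\rangle\rangle+\frac{1}{2}\langle\langle\tau_0^2\rangle\rangle\langle\langle\tau_0^2\rangle\rangle,$$
which is equivalent to Proposition~\ref{npt2}(ii) and is a formal power series identity in $t_0,t_1,\dots$ holding at \emph{every} point. The key point is that $\kappa$ classes come for free: by the generating-function form of the Kaufmann--Manin--Zagier formula \eqref{kmz}, inserting $\exp(\sum_j s_j\kappa_j)$ amounts to the shift $t_i\mapsto t_i+p_i(\bold s)$, where $p_0=p_1=0$ and $p_i$ is a universal polynomial in $\bold s$. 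Since the displayed identity holds at all $t$, it survives this substitution.

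First I would set $t_0=x$ and $t_i=p_i(\bold s)$ for $i\ge1$ and read off the coefficient of $\frac{x^n}{n!}\prod_j\frac{s_j^{b_j}}{b_j!}$. Under the shift, $\langle\langle\tau_0^m\rangle\rangle_g$ becomes $\sum_{n,\bold b}V_{g,n+m}(\bold b)\frac{x^n}{n!}\frac{\bold s^{\bold b}}{\bold b!}$, so the Leibniz rule on the quadratic term produces the weights $\binom{n}{r}\binom{\bold b}{\bold L}$, while the degree/dimension constraint automatically forces the genus to drop in the $\frac{1}{12}$ term. This yields
$$\langle\tau_1\tau_0^{n+1}\kappa(\bold b)\rangle_g=\frac{1}{12}V_{g-1,n+4}(\bold b)+\frac{1}{2}\sum_{g'}\sum_{r+s=n}\binom{n}{r}\sum_{\bold L+\bold{L'}=\bold b}\binom{\bold b}{\bold L}V_{g',r+2}(\bold L)V_{g-g',s+2}(\bold{L'}).$$

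Next I would convert the left-hand side into a genuine volume by a generalized dilaton equation. Forgetting the $\tau_1$-point via $\pi$, using $\kappa_a=\pi^*\kappa_a+\psi_\bullet^{a}$ together with $\pi_*(\psi_\bullet^{k+1})=\kappa_k$ (and $\kappa_0=2g-1+n$ on $\overline{\sM}_{g,n+1}$), one expands $\psi_\bullet\prod_a(\pi^*\kappa_a+\psi_\bullet^{a})^{b_a}$ and pushes forward to get
$$\langle\tau_1\tau_0^{n+1}\kappa(\bold b)\rangle_g=(2g-1+n+\|\bold b\|)\,V_{g,n+1}(\bold b)+\sum_{\substack{\bold L+\bold{L'}=\bold b\\ \|\bold{L'}\|\ge2}}\binom{\bold b}{\bold L}V_{g,n+1}(\bold L+\bm\delta_{|\bold{L'}|}),$$
the $\|\bold{L'}\|=1$ terms having been absorbed into the scalar factor. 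Substituting this into the previous display and relabelling so that the left-hand side reads $V_{g,n}(\bold b)$, it remains to move to the left the two \emph{degenerate} product terms, namely those in which one factor is the constant $V_{0,3}(\bold 0)=1$, i.e. $\bold L=\bold 0$ (forcing $g'=0,\ r=1$) or $\bold{L'}=\bold 0$ (forcing $g-g'=0,\ s=1$). These contribute exactly $(n-1)V_{g,n}(\bold b)$, turning the factor $2g-2+n+\|\bold b\|$ into $2g-1+\|\bold b\|$, and their removal is precisely the constraint $\bold L,\bold{L'}\neq\bold 0$; the surviving middle sum, moved to the right with a sign, gives Proposition~\ref{wp2}.

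The hard part will be the two combinatorial steps in the middle. Verifying the generalized dilaton equation with the correct $\|\bold{L'}\|\ge2$ merging terms and multinomial weights is delicate, as is checking that the KMZ shift reproduces exactly the weights $\binom{n}{r}\binom{\bold b}{\bold L}$ with no extraneous contributions. The most error-prone point is the final reconciliation of the scalar coefficient: one must confirm that the \emph{only} degenerate factorizations are the $\overline{\sM}_{0,3}$ ones, so that they account for precisely the discrepancy $n-1$ between $2g-2+n+\|\bold b\|$ and $2g-1+\|\bold b\|$. A small check such as $(g,n,\bold b)=(1,2,\bm\delta_2)$, where both sides equal $\frac{1}{12}$, is a reassuring sanity test.
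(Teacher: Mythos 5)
Your proposal is correct and follows essentially the route the paper indicates for this result: the integrated first KdV equation $\langle\langle\tau_0\tau_1\rangle\rangle=\frac{1}{12}\langle\langle\tau_0^4\rangle\rangle+\frac{1}{2}\langle\langle\tau_0^2\rangle\rangle^2$ from Proposition \ref{npt2}(ii), the Kaufmann--Manin--Zagier formula \eqref{kmz} (in its variable-shift form) to insert the $\kappa$ classes, and the pushforward identity $\kappa_a=\pi^*\kappa_a+\psi^a$, which the paper proves precisely because the same argument is used for Propositions \ref{wp2} and \ref{wp3}. The only caveat is the unstable case $(g,n,\bold b)=(0,3,\bold 0)$, where your two degenerate $\overline{\sM}_{0,3}$ factorizations coincide and the count $(n-1)V_{g,n}(\bold b)$ double-counts; since this is exactly the initial value of the recursion rather than a case it computes, the argument stands.
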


Proposition \ref{wp2} is an effective formula for computing higher
Weil-Petersson volumes recursively by induction on $g$ and $||\bold
b||$, with initial values
$$V_{0,3}(0)=1 \quad \text{ and }\quad V_{0,n}(\bm{\delta}_{n-3})=1,\ n\geq 4.$$

\begin{proposition}\cite{LX2}\label{wp3}
Let $g\geq2$ and $\bold b\in N^\infty$. Then
\begin{multline*}
\big((2g-1)(2g-2)+(4g-3)||\bold b||+||\bold b||^2\big)V_{g}(\bold
b)=5\sum_{\bold L+\bold{L'}=\bold b}\binom{\bold b}{\bold
L}V_{g,1}(\bold
L+\bm\delta_{|\bold{L'}|+1})\\
-\frac{1}6{}\sum_{\bold L+\bold{L'}=\bold b}\binom{\bold b}{\bold
L}V_{g-1,3}(\bold L+\bm\delta_{|\bold{L'}|})-\sum_{\bold
L+\bold{e}+\bold{f}=\bold b}\binom{\bold b}{\bold{
L,e,f}}V_{g',1}(\bold e+\bm\delta_{|\bold
L|})V_{g-g',2}(\kappa(\bold
f))\\
-(2g-1+||\bold b||)\sum_{\substack{\bold L+\bold{L'}=\bold
b\\||\bold{L'}||\geq 2}}\binom{\bold b}{\bold L}V_{g}(\bold
L+\bm\delta_{|\bold{L'}|})\\
-\sum_{\substack{\bold L+\bold{L'}=\bold b\\||\bold{L'}||\geq
2}}\binom{\bold b}{\bold L}\sum_{\bold e+\bold f=\bold
L+\bm{\delta}_{|\bold{L'}|}}\binom{\bold
L+\bm{\delta}_{|\bold{L'}|}}{\bold e} V_{g}(\bold
e+\bm\delta_{|\bold f|}).
\end{multline*}
\end{proposition}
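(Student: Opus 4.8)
\emph{Strategy.} Proposition~\ref{wp3} is the genus-$g$, zero-marked-point counterpart of Proposition~\ref{wp2}, and the guiding observation is that its left-hand coefficient factors as $(2g-1)(2g-2)+(4g-3)||\bold b||+||\bold b||^2=(2g-1+||\bold b||)(2g-2+||\bold b||)$. Both Theorem~\ref{wp1} and Proposition~\ref{wp2} require $n\ge 1$, and indeed the formal substitution $n=0$ into Proposition~\ref{wp2} kills the boundary product (its inner sum becomes $\sum_{r+s=-1}$) and returns the wrong coefficient for the genus-lowering term; so $V_g(\bold b)=V_{g,0}(\bold b)$ must be handled by introducing an auxiliary marked point, running the $n\ge1$ machinery, and then reabsorbing the point. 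One of the two linear factors above will trace back to the $2g-2$ of the marked-point reduction (completed to $2g-2+||\bold b||$ during the resummation) and the other to the weight $2g-1+||\bold b||$ on the left of Proposition~\ref{wp2}; seeing both emerge cleanly is the principal consistency check.

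\emph{Introducing the point and recursing.} First I would multiply by $2g-2$ and invoke the reduction formula displayed just before Proposition~\ref{wp2}, namely $\langle\kappa(\bold b)\rangle_g=\tfrac{1}{2g-2}\sum_{\bold L+\bold{L'}=\bold b}(-1)^{||\bold L||}\binom{\bold b}{\bold L}\langle\tau_{|\bold L|+1}\kappa(\bold{L'})\rangle_g$, turning $(2g-2)V_g(\bold b)$ into a signed multinomial sum of one-pointed integrals $\langle\tau_{|\bold L|+1}\kappa(\bold{L'})\rangle_g$. To each of these I would apply Theorem~\ref{wp1} with $n=1$, using the same inversion against the constants $\alpha_{\bold L}$ that yields Proposition~\ref{wp2}. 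Because there is only one marked point the transport sum $\sum_{j=2}^n$ is empty, so only a genus-lowering family $\langle\kappa(\cdot)\tau_r\tau_s\rangle_{g-1}$ and a boundary family of products $\langle\kappa(\cdot)\tau_r\rangle_{g'}\langle\kappa(\cdot)\tau_s\rangle_{g-g'}$ remain; these are the ultimate sources of the $V_{g-1,3}$ term and of the product term $V_{g',1}V_{g-g',2}$ in the statement.

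\emph{Reabsorbing the point.} I would then convert every remaining $\psi$-insertion back into $\kappa$-classes by the comparison $\kappa_i=\pi^*\kappa_i+\psi^i$ and the pushforward $\pi_*\psi^{\,m}=\kappa_{m-1}$ (with its two-pointed analogue on $\overline{\sM}_{g,2}$), which rewrites $\langle\tau_a\kappa(\bold c)\rangle_g=\sum_{\bold s\le\bold c}\binom{\bold c}{\bold s}\langle\kappa_{a+|\bold s|-1}\kappa(\bold c-\bold s)\rangle_g$. The contributions in which an index is raised by one produce the summand $5\sum_{\bold L+\bold{L'}=\bold b}\binom{\bold b}{\bold L}V_{g,1}(\bold L+\bm\delta_{|\bold{L'}|+1})$ and the two pure-volume sums restricted to $||\bold{L'}||\ge 2$, the nested sum over $\bold e+\bold f=\bold L+\bm\delta_{|\bold{L'}|}$ arising from a second application of the comparison.

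\emph{Main obstacle.} The real work is the final resummation. After all substitutions one is left with nested, sign-weighted convolutions of the symbols $\binom{\bold b}{\bold L}$, and the task is to collapse these into the single sums of the statement while reproducing precisely the coefficients $5$, $-\tfrac16$, $-1$ and the exact index shifts $\bm\delta_{|\bold{L'}|+1}$, $\bm\delta_{|\bold{L'}|}$ and $\bold e+\bold f=\bold L+\bm\delta_{|\bold{L'}|}$. The decisive inputs are Vandermonde-type identities for the multinomials and the cancellations forced by the signs $(-1)^{||\bold L||}$ from the reduction formula. I expect this bookkeeping — merging many a priori distinct contributions into exactly five terms — rather than any geometric ingredient to be the main difficulty, with the factorization $(2g-1+||\bold b||)(2g-2+||\bold b||)$ serving throughout as a running check.
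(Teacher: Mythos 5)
Your plan follows precisely the route the paper indicates for Proposition~\ref{wp3}: pass to one marked point via the reduction formula $\langle\kappa(\bold b)\rangle_g=\frac{1}{2g-2}\sum_{\bold L+\bold{L'}=\bold b}(-1)^{||\bold L||}\binom{\bold b}{\bold L}\langle\tau_{|\bold L|+1}\kappa(\bold{L'})\rangle_g$ (which the paper says to "apply first" when $n=0$), run the $n\geq1$ recursion of Theorem~\ref{wp1}, and reabsorb the point with the projection-formula comparison $\kappa_a=\pi^*_{n}\kappa_a+\psi_n^a$, an argument the paper explicitly states is reused in the proofs of Propositions~\ref{wp2} and~\ref{wp3}; your factorization of the left-hand coefficient as $(2g-1+||\bold b||)(2g-2+||\bold b||)$ is correct and is indeed the right running check. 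The only caveat is that the paper (a survey) defers the actual computation to \cite{LX2}, and your write-up likewise stops where the verification begins — the resummation pinning down the coefficients $5$, $-\tfrac{1}{6}$, $-1$ and the nested sum over $\bold e+\bold f=\bold L+\bm\delta_{|\bold{L'}|}$ is described but not performed, so this is a faithful roadmap of the same proof rather than an independent confirmation of it.
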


By induction on $||\bold b||$, Proposition \ref{wp3} reduces the
computation of $V_{g}(\bold b)$ to the cases of $V_{g,n}(\bold b)$
for $n\geq1$, which have been computed by Proposition \ref{wp2}.
Therefore Propositions \ref{wp2} and \ref{wp3} completely determine
higher Weil-Petersson volumes of moduli spaces of curves.

The virtue of the above recursion formulae is that they do not
involve $\psi$ classes. So if one is only interested in computing
higher Weil-Petersson volumes, the above recursions are more
efficient both in speed and space, especially when we utilize
``option remember'' in Maple procedures.

On the other hand, we know that intersection numbers of mixed $\psi$
and $\kappa$ classes can be expressed by integrals of pure $\kappa$
classes using the following well-known formula repeatedly. We give a
proof here, since the same argument is also used in the proof of Propositions \ref{wp2} and
\ref{wp3}.
\begin{proposition} Let $d_n\geq1$. Then
\begin{equation*}
\langle\tau_{d_1}\cdots\tau_{d_n}\kappa(\bold b)\rangle_g\\
=\sum_{\bold L+\bold{L'}=\bold b}\binom{\bold b}{\bold
L}\langle\tau_{d_1}\cdots\tau_{d_{n-1}}\kappa(\bold{L'})\kappa_{|\bold
L|+d_n-1}\rangle_g.
\end{equation*}
\end{proposition}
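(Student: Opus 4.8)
The plan is to realize the integral over $\overline{\sM}_{g,n}$ as a pushforward along the forgetful morphism $\pi\colon\overline{\sM}_{g,n}\to\overline{\sM}_{g,n-1}$ that drops the $n$-th marked point, and then integrate over $\overline{\sM}_{g,n-1}$. The comparison formulae I would invoke are the Arbarello-Cornalba relation $\kappa_i=\pi^*\kappa_i+\psi_n^i$ on $\overline{\sM}_{g,n}$ and the pullback relation $\psi_j=\pi^*\psi_j+D_{j,n}$ for $j<n$, where $D_{j,n}$ is the boundary divisor on which the $j$-th and $n$-th markings bubble off onto a three-pointed rational component. The only geometric input beyond these is the vanishing $\psi_n\cdot D_{j,n}=0$, which holds because $\psi_n$ restricts to zero on that rational bubble, together with the defining relation $\pi_*(\psi_n^{\,m+1})=\kappa_m$.

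First I would expand the $\kappa$-monomial. Writing $\kappa(\bold b)=\prod_i\kappa_i^{b_i}$ and substituting $\kappa_i=\pi^*\kappa_i+\psi_n^i$, the binomial theorem applied factor by factor gives
$$\kappa(\bold b)=\sum_{\bold L+\bold{L'}=\bold b}\binom{\bold b}{\bold L}\,\psi_n^{|\bold L|}\,\pi^*\kappa(\bold{L'}),$$
since $\prod_i(\psi_n^i)^{L_i}=\psi_n^{\sum_i iL_i}=\psi_n^{|\bold L|}$ and $\prod_i(\pi^*\kappa_i)^{L'_i}=\pi^*\kappa(\bold{L'})$. Multiplying by $\psi_1^{d_1}\cdots\psi_n^{d_n}$, every surviving term carries the factor $\psi_n^{\,d_n+|\bold L|}$ with $d_n+|\bold L|\geq d_n\geq1$. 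This positivity is exactly where the hypothesis $d_n\geq1$ enters, and it lets me replace each $\psi_j$ ($j<n$) by $\pi^*\psi_j$: expanding $\psi_j^{d_j}=(\pi^*\psi_j+D_{j,n})^{d_j}$ and multiplying by a positive power of $\psi_n$, every term containing $D_{j,n}$ is annihilated by $\psi_n\cdot D_{j,n}=0$, so $\psi_n^{\,m}\psi_j^{d_j}=\psi_n^{\,m}(\pi^*\psi_j)^{d_j}$ for $m\geq1$.

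After these substitutions the integrand of each summand is $\pi^*\!\big(\psi_1^{d_1}\cdots\psi_{n-1}^{d_{n-1}}\kappa(\bold{L'})\big)\cdot\psi_n^{\,d_n+|\bold L|}$. Applying the projection formula together with $\pi_*(\psi_n^{\,d_n+|\bold L|})=\kappa_{|\bold L|+d_n-1}$, and then integrating over $\overline{\sM}_{g,n-1}$, yields precisely the claimed identity. The computation is short once the ingredients are assembled; the step that deserves the most care is the second one, namely verifying that the boundary corrections $D_{j,n}$ drop out uniformly across all terms. This collapse is clean only because $\psi_n$ appears to a strictly positive power in every summand, so I would stress that the hypothesis $d_n\geq1$ is used essentially and is not a mere normalization. (When $d_n=0$ the term with $\bold L=\bold 0$ carries no power of $\psi_n$, the corrections no longer vanish, and the clean collapse fails, which is why $d_n\geq1$ cannot be dropped.)
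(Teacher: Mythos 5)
Your proof is correct and takes essentially the same route as the paper: push forward along the forgetful morphism, expand $\kappa(\bold b)$ factor by factor via $\kappa_i=\pi^*\kappa_i+\psi_n^i$, and conclude with the projection formula and $\pi_*(\psi_n^{m+1})=\kappa_m$. The only difference is that where the paper simply cites the Arbarello--Cornalba pushforward identity $\pi_{n*}(\psi_1^{d_1}\cdots\psi_n^{d_n})=\psi_1^{d_1}\cdots\psi_{n-1}^{d_{n-1}}\kappa_{d_n-1}$ for $d_n\geq1$, you reprove it from the divisor comparison $\psi_j=\pi^*\psi_j+D_{j,n}$ and the vanishing $\psi_n\cdot D_{j,n}=0$, which is exactly where the hypothesis $d_n\geq1$ is used in both arguments.
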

\begin{proof}
We need some results from \cite{Ar-Co}. Let $\pi_{n}: \overline{\sM}_{g,n}\longrightarrow
\overline{\sM}_{g,n-1}$ be the morphism that forgets the last marked
point. Then
\begin{enumerate}
\item[i)] $\pi_{n*}(\psi_1^{d_1}\cdots\psi_{n-1}^{d_{n-1}}\psi_n^{d_{n}})=
\psi_1^{d_1}\cdots\psi_{n-1}^{d_{n-1}}\kappa_{d_n-1}$\qquad
\text{for}\quad $d_n\geq1$,

\item[ii)] $\kappa_a=\pi^*_{n}(\kappa_a)+\psi^a_{n}$\qquad
\text{on}\quad $\overline{\sM}_{g,n}$.
\end{enumerate}

So if $d_n>0$, by the projection formula we have
\begin{align*}
\langle\tau_{d_1}\cdots\tau_{d_n}\kappa(\bold b)\rangle_g&=
\int_{\overline{\sM}_{g,n-1}}\pi_{n*}\left(\psi_1^{d_1}\cdots\psi_{n}^{d_n}\prod_{i\geq
1}(\pi_{n}^*\kappa_i+\psi_{n}^i)^{b(i)}\right) \\&=\sum_{\bold
L+\bold{L'}=\bold b}\binom{\bold b}{\bold
L}\langle\tau_{d_1}\cdots\tau_{d_{n-1}}\kappa(\bold{L'})\kappa_{|\bold{L}|+d_n-1}\rangle_g
\end{align*}

\end{proof}

\vskip 30pt
\section{Faber's conjecture on tautological rings}\label{tauto}

Denote by $\sM_g$ the moduli space of Riemann surfaces of genus
$g\geq2$. The tautological ring $\mathcal R^*(\sM_g)$ is defined to
be the $\mathbb Q$-subalgebra of the Chow ring $\mathcal A^*(\sM_g)$
generated by the tautological classes $\kappa_i$ and $\lambda_i$.

\begin{proposition} $\mathcal R^*(\sM_g)$ has the following
properties:
\begin{enumerate}
\item[i)] {\rm (Mumford \cite{Mu})} $\mathcal R^*(\sM_g)$ is in fact generated by
the $g-2$ classes $\kappa_1,\dots,\kappa_{g-2}$;

\item[ii)] {\rm (Looijenga \cite{Lo})} $\mathcal R^j(\sM_g)=0$ for $j>g-2$
and $\dim \mathcal R^{g-2}(\sM_g)\leq1$ {\rm (}Faber \cite{Fa2} showed
that actually $\dim\mathcal R^{g-2}(\sM_g)=1${\rm)}.
\end{enumerate}
\end{proposition}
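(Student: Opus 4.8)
The plan is to treat the three assertions in turn, deducing the generation statement from Mumford's formula \eqref{eqmu} but importing genuine geometry for the vanishing.

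For the generation in part (i), I would first restrict \eqref{eqmu} to the open moduli space $\sM_g$. Since $\sM_g$ carries no marked points and no boundary divisors, every $\psi_i$-term and every boundary pushforward in \eqref{eqmu} disappears, leaving the clean identity
\begin{equation*}
{\rm ch}_{2k-1}(\mathbb E)=\frac{B_{2k}}{(2k)!}\,\kappa_{2k-1},\qquad k\geq1,
\end{equation*}
on $\sM_g$. Combined with Mumford's vanishing ${\rm ch}_{2k}(\mathbb E)=0$ for $k\geq1$, which comes from Serre duality identifying $\mathbb E$ with its dual up to the relevant twist, this shows that the entire Chern character of the Hodge bundle is a polynomial in the odd classes $\kappa_1,\kappa_3,\kappa_5,\dots$. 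Feeding this into the universal expression of the $\lambda_j$ in terms of ${\rm ch}(\mathbb E)$ recorded in Section 4, each $\lambda_j$ lies in $\mathbb Q[\kappa_1,\kappa_2,\dots]$, so the $\lambda$-generators are redundant and $\mathcal R^*(\sM_g)=\mathbb Q[\kappa_1,\kappa_2,\dots]$.

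The truncation to $\kappa_1,\dots,\kappa_{g-2}$ is then immediate once part (ii) is in hand: any $\kappa_i$ with $i>g-2$ is a tautological class of degree $i>g-2$, hence vanishes by Looijenga's theorem and may be dropped from the generating set. Thus (i) reduces to the vanishing in (ii). For that vanishing, $\mathcal R^j(\sM_g)=0$ when $j>g-2$, formal manipulation of \eqref{eqmu} no longer suffices, and one must exploit the special geometry of $\sM_g$. I would follow Looijenga: realize tautological classes as pushforwards of monomials in $\psi$ from a fibered power of the universal curve, and combine the resulting diagonal relations with a dimension bound coming from Brill--Noether theory, namely that a generic genus $g$ curve carries a low-degree pencil to $\mathbb P^1$. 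This Looijenga-type dimension count is the main obstacle: it is precisely where the geometry of $\sM_g$ enters, in contrast to the purely combinatorial identities used elsewhere in this paper, and it has no analogue on $\overline{\sM}_g$.

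Finally, for $\dim\mathcal R^{g-2}(\sM_g)=1$ I would pair Looijenga's upper bound $\dim\mathcal R^{g-2}(\sM_g)\leq1$ with a single non-vanishing evaluation due to Faber. The socle proportionality can be detected against the $\lambda_g\lambda_{g-1}$ integrals, whose explicit nonzero value is recorded in \eqref{l2g}; exhibiting one degree-$(g-2)$ tautological class with nonzero image under this pairing certifies $\mathcal R^{g-2}(\sM_g)\neq0$ and completes the proposition. I expect the Looijenga vanishing to absorb essentially all the difficulty, the other two ingredients being a formal consequence of \eqref{eqmu} and a numerical nondegeneracy check against \eqref{l2g}, respectively.
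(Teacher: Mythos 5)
The paper offers no proof of this proposition: it is stated as a summary of results of Mumford, Looijenga and Faber, with the attributions built into the statement itself. So your sketch has to be measured against the cited sources rather than against an argument in the text, and on that basis it is broadly sound: eliminating the $\lambda$'s by restricting \eqref{eqmu} to $\sM_g$ (where the $\psi$ and boundary terms die) together with ${\rm ch}_{2k}(\mathbb E)=0$ is exactly Mumford's first step, and detecting $\mathcal R^{g-2}(\sM_g)\neq 0$ by pairing against $\lambda_g\lambda_{g-1}$ --- which vanishes on the boundary, so the nonzero integral \eqref{l2g} descends to a well-defined functional on the open moduli space --- is exactly Faber's nonvanishing argument from \cite{Fa2}. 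The one place where you genuinely depart from the sources is the truncation to $\kappa_1,\dots,\kappa_{g-2}$: you obtain it by invoking Looijenga's vanishing to kill $\kappa_i$ for $i>g-2$. That is logically admissible but inverts the actual dependence, since Mumford's theorem predates Looijenga's and has a direct, much cheaper proof: because $\omega_C$ is base-point free for $g\geq 2$, there is a surjection $\pi^*\mathbb E\twoheadrightarrow\omega$ on the universal curve whose kernel is a bundle of rank $g-1$; the vanishing of the Chern classes of that kernel in degrees $\geq g$, pushed forward by $\pi$, expresses every $\kappa_j$ with $j\geq g-1$ as a polynomial in the $\lambda$'s and lower $\kappa$'s, hence in $\kappa_1,\dots,\kappa_{g-2}$. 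That route keeps part (i) independent of part (ii) and costs only Riemann--Roch, whereas yours makes the ``easy'' half of the proposition rest on the hard half. Finally, your one-sentence gloss on Looijenga's vanishing is only a gesture, and not quite the right one: the geometric input in \cite{Lo} is not a Brill--Noether statement about generic curves but the pointed Riemann--Roch fact, valid for every curve $C$ and every $x\in C$, that $h^0(C,\mathcal O((g+1)x))\geq 2$, which Looijenga uses on fibered powers of the universal curve to bound the degrees in which tautological classes can live. Since the paper itself simply cites \cite{Lo} here, deferring that proof is acceptable, but as written your description would not reconstruct it.
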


Around 1993, Faber \cite{Fa} proposed a series of remarkable
conjectures about the structure of the tautological ring $\mathcal
R^*(\sM_g)$. Faber's conjectures have aroused a lot of interest and
motivated tremendous progress toward understanding the topology of
the moduli space of curves.

Faber's conjecture is mentioned as a fundamental question in
monographs such as \cite{HM} (pp. 68-70) and \cite{Ga} (pp.
148-155).

Roughly speaking, Faber's conjecture asserts that ``$\mathcal R^*(\sM_g)$
behaves like the cohomology ring of a $(g-2)$-dimensional complex
projective manifold.'' We now state it precisely.

\begin{enumerate}
\item[i)] (Perfect pairing conjecture) When an isomorphism $\mathcal R^{g-2}(\sM_g)\cong\mathbb Q$ is
fixed, the following natural pairing is perfect
\begin{equation}\label{perf}
\mathcal R^{k}(\mathcal M_g)\times \mathcal R^{g-2-k}(\mathcal
M_g)\longrightarrow \mathcal R^{g-2}(\mathcal M_g)=\mathbb Q;
\end{equation}
Faber's perfect pairing conjecture is still open to this day.

\item[ii)] The $[g/3]$ classes $\kappa_1,\dots,\kappa_{[g/3]}$
generate the ring, with no relations in degrees $\leq [g/3]$;
\end{enumerate}

The part (ii) of Faber's conjecture has been proved by Morita
\cite{Mo} and Ionel \cite{Io}.

Another important part of Faber's conjecture is the intersection
number conjecture, which we will discuss in some detail.

\subsection{The Faber intersection number conjecture}  Faber predicted the top intersections as
 the following relations in
$\mathcal R^{g-2}(\sM_g)$,
\begin{equation}\label{fa1}
\pi_*(\psi_1^{d_1+1}\cdots\psi_n^{d_n+1})=\frac{(2g-3+n)!(2g-1)!!}{(2g-1)!\prod_{j=1}^{n}(2d_j+1)!!}\kappa_{g-2},\quad\text{for
}\sum_{j=1}^n d_j=g-2,
\end{equation}
where $ \pi: \overline{\sM}_{g,n}\longrightarrow \overline{\sM}_{g}$
is the forgetful morphism.

Thus the Faber intersection number conjecture determines the ring
structure of $\mathcal R^*(\sM_g)$ if Faber's perfect pairing
conjecture is true.

Since $\lambda_g\lambda_{g-1}$ vanishes on the boundary of
$\overline{\sM}_g$, the Faber intersection number conjecture is
equivalent to
\begin{equation}\label{fa2}
\int_{\overline{\sM}_{g,n}}\psi_1^{d_1}\cdots\psi_n^{d_n}\lambda_g\lambda_{g-1}=
\frac{(2g-3+n)!|B_{2g}|}{2^{2g-1}(2g)!\prod_{j=1}^{n}(2d_j-1)!!}.
\end{equation}
By Mumford's formula \cite{Mu} for the Chern character of the Hodge
bundle, the above identity is equivalent to
\begin{align}\label{fa3}
\frac{(2g-3+n)!}{2^{2g-1}(2g-1)!\prod_{j=1}^n(2d_j-1)!!}
=&\langle\tau_{2g}\prod_{j=1}^n\tau_{d_j}\rangle_g-\sum_{j=1}^{n}
\langle\tau_{d_j+2g-1}\prod_{i\neq j}\tau_{d_i}\rangle_g\nonumber\\
&+\frac{1}{2}\sum_{j=0}^{2g-2}(-1)^j\langle\tau_{2g-2-j}\tau_j\prod_{i=1}^n\tau_{d_i}\rangle_{g-1}\\\nonumber
&+\frac{1}{2}\sum_{\underline{n}=I\coprod
J}\sum_{j=0}^{2g-2}(-1)^j\langle\tau_{j}\prod_{i\in
I}\tau_{d_i}\rangle_{g'}\langle\tau_{2g-2-j}\prod_{i\in
J}\tau_{d_i}\rangle_{g-g'},
\end{align}
where $d_j\geq1$, $\sum_{j=1}^{n}d_j=g+n-2$.

The identity \eqref{fa2} was shown to follow from the degree 0
Virasoro conjecture for $\mathbb P^2$ by Getzler and Pandharipande
\cite{GP}. Givental \cite{Giv} has announced a proof of the Virasoro
conjecture for $\mathbb P^n$. Y.-P. Lee and R. Pandharipande are
writing a book \cite{LP} giving details. Recently Teleman \cite{Te}
announced a proof of the Virasoro conjecture for all manifolds with semi-simple
quantum cohomology. His argument depends crucially on the Mumford
conjecture about the stable rational cohomology rings of the moduli
spaces proved by Madsen and Weiss.

However, the Virasoro conjecture is a huge machinery and conceals the
combinatorial structure of intersection numbers. The proof of the
Mumford conjecture is also highly nontrivial. So a more direct proof
of the Faber intersection number conjecture is very much desired.

Goulden, Jackson and Vakil \cite{GJV3} recently give an enlightening
proof of the identity \eqref{fa1} for up to three points. Their
remarkable proof relied on relative virtual localization in
Gromov-Witten theory and some tour de force combinatorial
computations.

\subsection{Relations with $n$-point functions} Now we describe our approach to proving the identity \eqref{fa3}; the
details are in \cite{LX6}.

Since the one- and two-point functions in genus $0$ are
$$F_0(x)=\frac{1}{x^2},\qquad F_{0}(x,y)=\frac{1}{x+y}=\sum_{k=0}^\infty (-1)^k\frac{x^k}{y^{k+1}},$$
it is consistent to define the virtual intersection numbers
$$\langle\tau_{-2}\rangle_0=1,\qquad \langle\tau_{k}\tau_{-1-k}\rangle_0=(-1)^k,\ k\geq0.$$

For $a, b\in\mathbb Z$, we introduce the following notation:
$$
L_g^{a,b}(y,x_1\dots,x_n) \triangleq
\sum_{g'=0}^g\sum_{\underline{n}=I\coprod J} (y+\sum_{i\in I}x_i)^a
(-y+\sum_{i\in J}x_i)^b F_{g'}(y,x_I)F_{g-g'}(-y,x_J),
$$
We regard $L_g^{a,b}(y,x_1\dots,x_n)$ as a formal series in $\mathbb
Q[x_1,\dots,x_n][[y,y^{-1}]]$ with $\deg y<\infty$.

\begin{multline*}
\frac{1}{2}\sum_{\underline{n}=I\coprod
J}\sum_{j=0}^{2g-2}(-1)^j\langle\tau_{j}\prod_{i\in
I}\tau_{d_i}\rangle_{g'}\langle\tau_{2g-2-j}\prod_{i\in
J}\tau_{d_i}\rangle_{g-g'}
+\langle\prod_{j=1}^n\tau_{d_j}\tau_{2g}\rangle_g- \sum_{j=1}^{n}
\langle\tau_{d_j+2g-1}\prod_{i\neq j}\tau_{d_i}\rangle_g\\
=\frac{1}{2}\sum_{\underline{n}=I\coprod J}\sum_{j\in\mathbb
Z}(-1)^j\langle\tau_{j}\prod_{i\in
I}\tau_{d_i}\rangle_{g'}\langle\tau_{2g-2-j}\prod_{i\in
J}\tau_{d_i}\rangle_{g-g'}\\
=\left[\sum_{g'=0}^g\sum_{\underline{n}=I\coprod J}
F_{g'}(y,x_I)F_{g-g'}(-y,x_J)\right]_{y^{2g-2}\prod_{i=1}^n
x_i^{d_i}}\\
=\left[L_g^{0,0}(y,x_1,\dots,x_n)\right]_{y^{2g-2}\prod_{i=1}^n
x_i^{d_i}}.
\end{multline*}

The right-hand side of \eqref{fa3} may be written as the
coefficients of the $n$-point functions.
\begin{equation}
\left[F_{g-1}(y,-y,x_1,\dots,x_n)\right]_{y^{2g-2}}+\left[L_g^{0,0}(y,x_1,\dots,x_n)\right]_{y^{2g-2}\prod_{i=1}^n
x_i^{d_i}}.
\end{equation}
So in order to prove the Faber intersection number conjecture, it is
sufficient to prove the following results.

\begin{proposition} We have
\begin{enumerate}
\item[i)]
$$\left[L^{0,0}_g(y,x_1\dots,x_n)\right]_{y^{2g-2}}=0;$$

\item[ii)] For
$d_j\geq 1$ and $\sum_{j=1}^n d_j=g+n$,
$$\left[L^{2,2}_g(y,x_1\dots,x_n)\right]_{y^{2g}\prod_{j=1}^n
x_j^{d_j}}= \frac{(2g+n+1)!}{4^g(2g+1)!\prod_{j=1}^n(2d_j-1)!!}.$$
\end{enumerate}
\end{proposition}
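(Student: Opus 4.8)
The plan is to work throughout with the normalized functions $G$ rather than the raw $n$-point functions $F$, exploiting a cancellation special to the substitution $y\mapsto y,\,-y$. Since $F_{g'}(y,x_I)=\exp\!\big(\tfrac1{24}(y^3+\sum_{i\in I}x_i^3)\big)G_{g'}(y,x_I)$ and $F_{g-g'}(-y,x_J)=\exp\!\big(\tfrac1{24}(-y^3+\sum_{i\in J}x_i^3)\big)G_{g-g'}(-y,x_J)$, the cubic terms in $y$ cancel in every product, so that $L^{a,b}_g(y,x)=\exp\!\big(\tfrac1{24}\sum_{j=1}^n x_j^3\big)\,\widetilde L^{a,b}_g(y,x)$, where $\widetilde L^{a,b}_g$ is the same expression with each $F$ replaced by the corresponding $G$. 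I would first record that $\widetilde L^{a,b}_g$ is homogeneous: $\widetilde L^{0,0}_g$ has total degree $3g+n-4$ and $\widetilde L^{2,2}_g$ has total degree $3g+n$. For part (ii) the target monomial $y^{2g}\prod_j x_j^{d_j}$ with $\sum_j d_j=g+n$ already has total degree $3g+n$, so the nonconstant terms of the exponential cannot contribute; hence $\big[L^{2,2}_g\big]_{y^{2g}\prod x_j^{d_j}}=\big[\widetilde L^{2,2}_g\big]_{y^{2g}\prod x_j^{d_j}}$ and the whole problem becomes a statement about the polynomials $G_{g'}$. Likewise part (i) reduces to showing $\big[\widetilde L^{0,0}_g\big]_{y^{2g-2}}=0$.

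The organizing device is the recursion of Theorem~\ref{npt} and Proposition~\ref{npt2}(i) applied to the $(n+2)$-point function $G_\bullet(y,-y,x_1,\dots,x_n)$. Because $y+(-y)=0$, the two quantities governing that recursion simplify: $\sum_i\xi_i=\sum_{i=1}^n x_i$ and $\Delta(y,-y,x)=\big((\sum_i x_i)^3-\sum_i x_i^3\big)/3$ is independent of $y$. Splitting the partition sum in the quadratic term $P_\bullet(y,-y,x)$ according to whether the two opposite-sign arguments $y$ and $-y$ land in different blocks or in the same block, the ``separated'' contribution reassembles precisely into $\widetilde L^{2,2}_g(y,x)/\sum_i x_i$, while the ``together'' contribution produces lower $(n+2)$-point functions $G_{g'}(y,-y,x_I)$. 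This yields an identity expressing $\widetilde L^{2,2}_g$ through $G_g(y,-y,x)$, the $\Delta$-multiple of $G_{g-1}(y,-y,x)$, and the together-terms, which is exactly the bookkeeping that feeds the coefficient formulas of Theorem~\ref{nptcoeff}.

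With this reduction, both parts are evaluated by extracting the appropriate power of $y$. For part (i) I would use the degree bound of Theorem~\ref{nptcoeff}(i): each factor $G_{g'}(y,x_I)$ has $y$-degree at most $2g'-2+|I|$, so the product has $y$-degree at most $2g-4+n$, and the only contributions to $y^{2g-2}$ come from factors near their top degree. Pairing the explicit top coefficients of Theorem~\ref{nptcoeff}(ii) against the alternating sign produced by the $(-y)$ argument, the sum over genus splittings $g'+(g-g')=g$ and index partitions $\{1,\dots,n\}=I\coprod J$ telescopes to zero, the symmetry $\widetilde L^{0,0}_g(y,x)=\widetilde L^{0,0}_g(-y,x)$ keeping the bookkeeping consistent. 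For part (ii) the same inputs give a nonzero value: the $y^{2g}$ coefficient is assembled from the top coefficients $1/\big(4^{g'}\prod_{i\in I}(2e_i+1)!!\big)$ of the two $G$-factors, weighted by the binomial expansion of the prefactors $(y+\sum_I x)^2(-y+\sum_J x)^2$, which supply the four missing powers of $y$.

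The hard part will be the final combinatorial summation in (ii). After substituting the explicit coefficients, one is left with a multiple sum over the genus splitting, over the partition $I\coprod J$, over the distribution of the exponents $d_j$, and over the four prefactor powers of $y$; the claim is that this collapses to the single closed value $(2g+n+1)!/\big(4^g(2g+1)!\prod_j(2d_j-1)!!\big)$. Establishing this requires a nontrivial double-factorial identity converting the internal $(2e_i+1)!!$ into the $(2d_j-1)!!$ and $(2g+1)!!$ on the right, and it must hold uniformly in $n$; for larger $n$ the chosen power $y^{2g}$ lies strictly below the top $y$-degree of the $G$-product, so the subleading evaluation of Theorem~\ref{nptcoeff}(iii), and in principle still lower coefficients, enter. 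For this reason I expect the cleanest route is not brute-force extraction but induction on $g$: use the recursion of the previous paragraph to lower the genus of $\widetilde L^{2,2}_g$ and match the two sides' dependence on the data $(d_1,\dots,d_n)$, with the genus-zero evaluation $\langle\prod_j\tau_{d_j}\rangle_0=\binom{n-3}{d_1,\dots,d_n}$ recorded above as the base case.
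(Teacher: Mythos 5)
Your plan is essentially the paper's own (very terse) proof: the paper simply says one applies Proposition~\ref{npt2}(ii) --- that is, the $(n+2)$-point recursion evaluated at the arguments $y,-y,x_1,\dots,x_n$, with the partition sum split according to whether $y$ and $-y$ fall in the same block --- to prove a strengthened version of the proposition by induction on the genus, which is exactly the route you converge on in your final paragraph (the full details being deferred to \cite{LX6} in both cases). One caution: your ``telescoping of top coefficients'' mechanism for part (i) only suffices when the target power $y^{2g-2}$ equals the maximal $y$-degree $2g-4+n$ of the $G$-product, i.e.\ for $n=2$; for larger $n$ subleading coefficients contribute and you must fall back on the inductive argument over the strengthened family, just as you already anticipate for part (ii).
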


Next we apply Proposition \ref{npt2}(ii) to prove a strengthened
version of the above proposition inductively.

\section{Dimension of tautological rings}

If $\bold m\in N^\infty$ and $|\bold m|=g-2$, then, from Faber's
intersection number identity \eqref{fa1} and equation \eqref{kmz}, we have in $\mathcal
R^{g-2}(\mathcal M_g)$
\begin{equation*}
\kappa(\bold m)=\sum_{r=1}^{||\bold m||}\frac{(-1)^{||\bold
m||-r}}{r!}\sum_{\substack {\bold
m=\bold{m_1}+\cdots+\bold{m_r}\\\bold{m_i}\neq\bold 0}}\binom{\bold
m}{\bold
{m_1,\dots,m_r}}\frac{(2g-3+r)!\kappa_{g-2}}{(2g-2)!!\prod_{j=1}^r(2|\bold
m_j|+1)!!}.
\end{equation*}
Let $0\leq k\leq g-2$ and denote by $p(k)$ the partition number of $k$. Define a matrix $V_g^k$ of size $p(k)\times
p(g-2-k)$ with entries $$(V_g^k)_{\bold L, \bold
L'}=\sum_{r=1}^{||\bold L+\bold L'||}\frac{(-1)^{||\bold L+\bold
L'||-r}}{r!}\sum_{\substack {\bold L+\bold
L'=\bold{m_1}+\cdots+\bold{m_r}\\\bold{m_i}\neq\bold 0}}\binom{\bold
L+\bold L'}{\bold
{m_1,\dots,m_r}}\frac{(2g-3+r)!}{\prod_{j=1}^r(2|\bold m_j|+1)!!},$$
where $\bold L, \bold L'\in N^\infty$ and $|\bold L|=k$, $|\bold
L'|=g-2-k$.

We call $V_g^k$ the Faber intersection matrix. Instead of using the above closed formula directly, we \cite{LX8} have
some recursive ways to compute entries of $V_g^k$. As a result, we have computed $V_g^k$ for all $g\leq36$. We are interested in
the rank of the Faber intersection matrix, so we introduce the notation
$$R_g^k:=\rank V_g^k,\qquad R_g:=\sum_{k=0}^{g-2}R_g^k.$$
Obviously we have $R_g^k=R_g^{g-2-k},\ 0\leq k\leq g-2$. If Faber's
perfect pairing conjecture \eqref{perf} is true, then we have for
$0\leq k\leq g-2$,
$$R_g^k=\dim(R^k(\sM_g)),$$
$$R_g=\dim\mathcal R^*(\mathcal M_g).$$

Since Faber has verified his conjecture for all $g\leq23$, the above
relations hold at least when $g\leq23$. Like the importance of
cohomology, the dimensions of tautological rings are important
invariants of moduli spaces of curves.

\subsection{Ramanujan's mock theta functions}
First we recall the standard notation of basic hypergeometric
series.

$$(a)_n=(a;q)_n=(1-a)(1-aq)\cdots(1-aq^{n-1}),\qquad n\geq1$$
$$(a)_{0}=1,\qquad (a)_{\infty}=(a;q)_{\infty}=\prod_{n=0}^{\infty}(1-aq^n).$$

Recall Ramanujan's third order mock theta function $\omega(q)$

\begin{multline}\label{eqmock}
\omega(q)=\sum_{n=0}^\infty
\omega(n)q^n:=\sum_{n=0}^\infty\frac{q^{2n^2+2n}}{\prod_{j=0}^n(1-q^{2j+1})^2}\\
=1+2q+3q^2+4q^3+6q^4+8q^5+10q^6+14q^7+18q^8+22q^9\\
+29q^{10}+36q^{11}+44q^{12}+56q^{13}+68q^{14}+82q^{15}+\cdots
\end{multline}

In 1966, Andrews proved asymptotic formulae for $\omega(n)$ and
another third order mock theta function $f(q)$,
\begin{multline}
f(q)=\sum_{n=0}^\infty f(n)q^n:=\sum_{n=0}^\infty\frac{q^{n^2}}{\prod_{j=1}^n(1+q^{j})^2}\\
=1+q-2q^2+3q^3-3q^4+3q^5-5q^6+7q^7-6q^8+6q^9+\cdots
\end{multline}
 In 2003, Andrews \cite{An}
improved his asymptotic formulae for $f(n)$ to a conjectural exact
formula. Around the same time, Zwegers \cite{Zw} found a relationship
between mock theta functions and vector-valued modular forms.
Andrews' conjectural exact formula for $f(n)$ was proved by Bringmann
and Ono \cite{BO} using Zwegers' results, along with the theory of
Maass forms and Poincar\'{e} series. Recently, Garthwaite \cite{Gar}
proved an analogue of Andrews' exact formula for $\omega(n)$
following the method of Bringmann and Ono.

We first introduce some notation. Let $e(x):=e^{2\pi i x}$. If
$k\geq 1$  and $n$ are integers, define
\begin{equation}\label{eqakn}
A_k(n):=\frac{1}{2} \sqrt{\frac{k}{12}} \sum_{\substack{x \pmod {24k}\\
x^2 \equiv -24n+1 \pmod{24k}}}  \chi_{12}(x) \cdot
e\left(\frac{x}{12k}\right),
\end{equation}
where the sum runs over the residue classes modulo $24k$, and where
we use the Kronecker symbol
$$\chi_{12}(x) := \left(\frac{12}{x}\right)=\begin{cases}0, & x\text{ is not coprime to } 6,\\
1, & x\equiv\pm1\mod 12,\\
-1, & x\equiv\pm5\mod 12.
\end{cases}
$$
Let $I_{1/2}$ be the $I$-Bessel function
$$I_{1/2}(z)=\left(\frac{2}{\pi z}\right)^{\frac12}\sinh z.$$

Now we can state Garthwaite's formula \cite{Gar}:
\begin{equation}\label{eqgar}
\omega(n)=\frac{\pi}{2\sqrt{2}}(3n+2)^{-1/4}\sum_{k=1}^{\infty}
\frac{ (-1)^{k-1}A_{2k-1}\left(nk-\frac{3k(k-1)}{2}\right)}{2k-1}
\cdot I_{1/2}\left(\frac{\pi \sqrt{3n+2}}{6k-3}\right).
\end{equation}
Actually this series approaches the exact value very rapidly and can
be effectively used to compute exact values of $\omega(n)$.

\begin{lemma}\label{even}
When $n$ is odd, $\omega(n)$ is even.
\end{lemma}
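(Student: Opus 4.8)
The plan is to prove this purely as a congruence statement about the $q$-expansion, reducing everything modulo $2$ and showing that $\omega(q)$, viewed as an element of $\mathbb{F}_2[[q]]$, is actually a power series in $q^2$; this immediately forces every odd-indexed coefficient to vanish in $\mathbb{F}_2$, which is exactly the claim. No analytic input (in particular, none of Garthwaite's formula \eqref{eqgar}) is needed.

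First I would reduce each denominator factor modulo $2$. Since $(1-q^{2j+1})^2 = 1 - 2q^{2j+1} + q^{4j+2} \equiv 1 + q^{4j+2} \pmod{2}$, and since $1 + q^{4j+2} = 1 - q^{4j+2}$ over $\mathbb{F}_2$, a geometric-series expansion gives
$$\frac{1}{(1-q^{2j+1})^2} \equiv \frac{1}{1 - q^{4j+2}} = \sum_{k \geq 0} q^{(4j+2)k} \pmod{2},$$
a series supported entirely on even exponents.

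Next I would substitute this into the defining expression \eqref{eqmock}. Modulo $2$, the $n$-th summand becomes
$$q^{2n^2+2n} \prod_{j=0}^{n} \Bigl( \sum_{k \geq 0} q^{(4j+2)k} \Bigr).$$
Because $2n^2+2n$ is even and every $4j+2$ is even, each monomial appearing here carries an even exponent. Summing over $n \geq 0$ is a legitimate operation on formal power series, since the $n$-th summand is divisible by $q^{2n^2+2n}$ and hence any fixed power of $q$ receives contributions from only finitely many $n$. Therefore $\omega(q) \in \mathbb{F}_2[[q^2]]$, so the coefficient of $q^n$ is even whenever $n$ is odd.

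I expect essentially no obstacle here: the single point deserving care is the validity of manipulating the infinite product and the outer sum as formal power series over $\mathbb{F}_2$, which is guaranteed by the quadratic growth of the leading exponents $2n^2+2n$. The proof is otherwise a direct parity computation, and the same method should in principle detect finer congruences (e.g.\ modulo $4$) for $\omega(n)$ should they be desired.
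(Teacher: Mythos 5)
Your proof is correct and is essentially the paper's argument: the paper expands $\frac{1}{(1-q^{2j+1})^2}=\sum_{k\ge0}(k+1)q^{(2j+1)k}$ and observes that the coefficient $(k+1)$ is odd only when the exponent $(2j+1)k$ is even, which is exactly your statement that each factor, and hence each summand of \eqref{eqmock}, reduces mod $2$ to a series in $q^2$. Your Frobenius-style reduction $(1-q^{2j+1})^2\equiv 1-q^{4j+2}\pmod 2$ is just a slightly different packaging of the same parity computation.
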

\begin{proof} First we note that $$\frac{1}{(1-q^{2j+1})^2}=\sum_{k=0}^\infty
(k+1)q^{(2j+1)k}.$$ The lemma follows easily from \eqref{eqmock}.
\end{proof}

Compared with its definition \eqref{eqmock}, $\omega(q)$ has a
simpler expression. The following lemma is well-known to experts, but we
include a proof here for the reader's convenience.
\begin{lemma}
$$\sum_{n=0}^\infty
\omega(n)q^n=\sum_{n=0}^\infty\frac{q^n}{\prod_{j=0}^n(1-q^{2j+1})}.$$
\end{lemma}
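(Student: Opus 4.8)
The plan is first to unwind the definition. By \eqref{eqmock} we have $\sum_{n\ge0}\omega(n)q^n = \omega(q) = \sum_{n=0}^\infty q^{2n^2+2n}/(q;q^2)_{n+1}^2$, so the assertion is equivalent to the purely formal $q$-series identity
$$\sum_{n=0}^\infty \frac{q^{2n^2+2n}}{(q;q^2)_{n+1}^2} = \sum_{n=0}^\infty \frac{q^n}{(q;q^2)_{n+1}}.$$
Since the $n$-th summand on the left (resp.\ right) is divisible by $q^{2n^2+2n}$ (resp.\ by $q^n$), both series are well-defined elements of $\mathbb{Z}[[q]]$, and it suffices to establish this equality coefficient by coefficient. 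I would emphasize the structural analogy with the classical pair $\sum_n q^n/(q;q)_n = \sum_n q^{n^2}/(q;q)_n^2$ (both of which equal $1/(q;q)_\infty$): here the parts are restricted to be odd, the single product is replaced by $(q;q^2)_{n+1}$ and the Durfee-square exponent $n^2$ by the ``$2$-modular'' exponent $2n^2+2n$. The essential difference is that \emph{no} infinite product interpolates the two sides — that failure is exactly what makes $\omega$ a mock theta function — so the two forms must be matched directly rather than through a common product.

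The primary approach is to treat the right-hand (``Eulerian'') series as a basic hypergeometric series and transform it into the left-hand (``quadratic'' Hecke-type) series. With base $q^2$ one has $(q;q^2)_{n+1} = (1-q)(q^3;q^2)_n$, so the right side equals $\frac{1}{1-q}\sum_n q^n/(q^3;q^2)_n$, a ${}_2\phi_1$-type sum in base $q^2$. I would introduce an auxiliary variable $x$, set $\Phi(x) = \sum_{n\ge0} x^n/(q;q^2)_{n+1}$, and derive a first-order $q$-difference equation for $\Phi$ by the standard manipulation of splitting off the factor $1/(1-q^{2n+1})$ and shifting the summation index; an analogous equation is produced for the two-variable version of the quadratic side. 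Matching these two functional equations, together with the common initial value at $x=0$, forces the two series to coincide, and specializing $x=q$ yields the claim. This is in effect a specialization of the Rogers--Fine (or Heine) transformation, and the delicate computational point is to arrange the iteration so that it manufactures precisely the exponent $2n^2+2n$ alongside the \emph{squared} denominator.

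As an alternative I would give a bijective proof: interpret the right side as a weighted count of partitions into odd parts with bounded largest part (the factor $q^n$ recording the bound), and the left side through the Durfee square of the $2$-modular diagram of such a partition, then construct a weight-preserving bijection between the two families. In either route I expect the \textbf{main obstacle} to be the appearance of the quadratic exponent: the crux is to explain how $2n^2+2n$ and the second copy of $(q;q^2)_{n+1}$ are generated. In the analytic route this is the careful bookkeeping in iterating the $q$-difference equation, together with checking that the boundary terms vanish; in the combinatorial route it is pinning down the correct notion of $2$-modular Durfee square so that the square region contributes exactly $q^{2n^2+2n}$ while the two arms each contribute one factor of $1/(q;q^2)_{n+1}$.
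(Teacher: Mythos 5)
Your primary approach is the one the paper uses: introduce a two-variable generalization, derive a first-order $q$-difference equation of Rogers--Fine type, iterate it to manufacture the quadratic exponent, and specialize at the end. One concrete correction, though: the auxiliary series you propose, $\Phi(x)=\sum_{n\ge0}x^n/(q;q^2)_{n+1}$, puts the new variable only in the numerator, and splitting off the factor $1/(1-q^{2n+1})$ then produces the \emph{different} series $\sum_n x^n/(q;q^2)_n$ rather than a dilate of $\Phi$ itself, so the recursion does not close. The paper instead takes $F(t)=\sum_{n\ge0}t^n/(tq;q)_n$, with $t$ inside the Pochhammer symbol; then $(1-t)F(t)=1+\frac{t^2q}{(1-tq)^2}\,(1-tq)F(tq)$ is a genuine first-order equation for $R(t)=(1-t)F(t)$, and iterating it directly yields $(1-t)F(t)=\sum_{n\ge0}t^{2n}q^{n^2}/(tq;q)_n^2$ --- this is exactly where the exponent $n^2$ and the squared denominator come from, answering the ``main obstacle'' you flag. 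The substitution $q\mapsto q^2$, $t\mapsto q$ then gives the lemma. So your plan is sound in outline and identical in spirit to the paper's proof, but as written the functional-equation step would stall until you move the auxiliary variable into the base of the $q$-Pochhammer product.
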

\begin{proof}

Consider the following $q$-series,
$$F(t)=\sum_{n=0}^{\infty}\frac{t^n}{(tq)_n} \ ,$$
where $|q|<1$. Since $(tq)_n=(1-tq^n)(tq)_{n-1}$, we have
\begin{align*}
(1-t)F(t)&=1+\sum^{\infty}_{n=1}(\frac{1}{(tq)_n}-\frac{1}{(tq)_{n-1}})t^n\\
&=1+\sum^{\infty}_{n=1}(\frac{1}{(tq)_n}-\frac{(1-tq^n)}{(tq)_{n}})t^n\\
&=1+t\sum^{\infty}_{n=1}\frac{(tq)^n}{(tq)_n}\\
&=1+\frac{t^{2}q}{1-tq}F(tq).
\end{align*}
In the last equation, we used $(tq)_n=(1-tq)(tq^2)_{n-1}$.

Let $R(t)=(1-t)F(t)$. Then$$R(t)=1+\frac{t^2q}{(1-tq)(1-tq)}R(tq)$$

By iteration, we
have$$R(t)=1+\sum_{n=0}^{r-1}\frac{t^{2n+2}q^{(n+1)^2}}{(tq)^2_{n+1}}+\prod_{n=0}^{r}\frac{t^2q^{2n+1}}{(1-tq^{n+1})^2}R(tq^{r+1})$$

Letting $r\rightarrow\infty$, we
get $$(1-t)F(t)=\sum_{n=0}^\infty\frac{t^{2n}q^{n^2}}{(tq)^2_n}$$

Substituting $q$ by $q^2$, $t$ by $q$, we get the desired equation,
$$\sum_{n=0}^\infty\frac{q^n}{(1-q)(1-q^3)\cdots(1-q^{2n+1})}=\sum_{n=0}^\infty\frac{q^{2n^2+2n}}{(1-q)^2(1-q^3)^2\cdots(1-q^{2n+1})^2}$$

\end{proof}

\subsection{Asymptotics of tautological dimensions}

With the help of the website ``The On-Line Encyclopedia of Integer
Sequences'', we discovered the surprising coincidence that
$R_g=\omega(g-2)$ for $2\leq g\leq 17$. However, when $g>17$, this is
no longer true. Let us use the notation $\omega_g:=\omega(g-2)$.
$$
\begin{array}{c||c|c|c|c|c|c|c|c|c|c|c|c|c}
    g                               & 18 & 19&20 &21 &22 &23 &24 &25 &26 &27 &28 &29 &30
\\\hline\hline \omega_g & 101&122&146&176&210&248&296&350&410&484&566&660&772
\\\hline      R_g                   &
102&122&146&178&211&250&300&352&415&492&574&670&788
\end{array}
$$

In general, we have
the following conjecture.
\begin{conjecture}\label{dimen} For all $g\geq 2$, we have
\begin{equation}\label{dimineq}
R_g\geq\omega_g
\end{equation}
 and there
exists some constant $C>0$, such that
$$\lim_{g\rightarrow\infty}\frac{\omega_g}{R_g}=C.$$
\end{conjecture}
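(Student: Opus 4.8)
The plan is to split the conjecture into its two assertions and to route both through the \emph{total rank} $R_g=\sum_{k=0}^{g-2}\rank V_g^k$, controlled from above by the trivial bound $\rank V_g^k\le\min\bigl(p(k),p(g-2-k)\bigr)$. Writing $n=g-2$ and
$$S(n):=\sum_{k=0}^n\min\bigl(p(k),p(n-k)\bigr)=2\sum_{k<n/2}p(k)+[n\text{ even}]\,p(n/2),$$
I obtain the sandwich $\omega_g\le R_g\le S(n)$, whose left inequality is exactly \eqref{dimineq} and whose right inequality is unconditional. The three steps are then: (i) produce matching asymptotics for $\omega_g$ and for $S(n)$; (ii) prove the lower bound $R_g\ge\omega_g$ by exhibiting an explicit nonsingular submatrix; and (iii) upgrade the resulting boundedness of $\omega_g/R_g$ to genuine convergence by pinning down the true growth of $R_g$.

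For the analytic input I would first extract the asymptotics of $\omega(n)$ from Garthwaite's exact formula \eqref{eqgar}. The $k=1$ summand dominates, and inspection of \eqref{eqakn} shows that $A_1(\cdot)$ at $k=1$ reduces to a sum over the residues $x$ with $x^2\equiv1\pmod{24}$, hence is \emph{independent of $n$}; combined with $I_{1/2}(z)=\sqrt{2/(\pi z)}\,\sinh z\sim(2\pi z)^{-1/2}e^{z}$ this yields
$$\omega(n)\sim\frac{A_1}{4}\,n^{-1/2}\,e^{\pi\sqrt{n/3}}.$$
On the other side, the Hardy--Ramanujan asymptotic $p(m)\sim\frac1{4m\sqrt3}e^{\pi\sqrt{2m/3}}$ together with a Laplace-type evaluation of $\sum_{k\le m}p(k)$ gives $S(n)\sim C_S\,n^{-1/2}e^{\pi\sqrt{n/3}}$ with an explicit $C_S>0$. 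The crucial observation is that both the exponential rate $e^{\pi\sqrt{n/3}}$ and the polynomial factor $n^{-1/2}$ coincide for $\omega(n)$ and $S(n)$; hence, once $\omega_g\le R_g\le S(n)$ is known, the ratio $\omega_g/R_g$ eventually lies in $[\,C_\omega/C_S,\,1\,]$, bounded away from $0$ and $\infty$.

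To prove the inequality \eqref{dimineq} I would seek a combinatorial model for $\omega(n)$ that indexes a nonsingular submatrix of the block collection $\{V_g^k\}_k$. The second lemma rewrites $\omega(q)=\sum_{n\ge0}q^n/\prod_{j=0}^n(1-q^{2j+1})$, so $\omega(n)$ counts partitions of $n-m$ into odd parts $\le 2m+1$, summed over $m$; the aim is to attach to each such object a pair $(\bold L,\bold L')$ with $|\bold L|=k$, $|\bold L'|=g-2-k$ so that the corresponding entries $(V_g^k)_{\bold L,\bold L'}$, which are the explicit rationals displayed in the definition of the Faber intersection matrix, form an invertible $\omega_g\times\omega_g$ matrix after a triangular reordering. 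Since those entries involve the rapidly varying double factorials $\prod_j(2|\bold m_j|+1)!!$, nonsingularity should follow from a dominant-diagonal or leading-order estimate once the indexing is chosen correctly, and verifying this triangularity is the technical heart of the step.

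The main obstacle, and the reason the statement is only conjectural, is the control of $R_g$ from \emph{below} beyond the crude submatrix bound, which is exactly what convergence of $\omega_g/R_g$ demands. One would have to show that the total rank deficiency $\sum_k\bigl(\min(p(k),p(g-2-k))-\rank V_g^k\bigr)$ has strictly smaller exponential order than $S(n)$, forcing $R_g\sim C_S\,n^{-1/2}e^{\pi\sqrt{n/3}}$ and hence $\omega_g/R_g\to C_\omega/C_S=:C$, or else identify the leading term of the deficiency exactly; the ideal route would be to compute the generating function $\sum_g R_g\,x^{g-2}$ outright and recognize its mock‑modularity. Because $\rank V_g^k$ is governed by the linear relations among Faber's top intersection numbers \eqref{fa1}, and no structural reason is known for a mock theta function to measure those relations, estimating this deficiency asymptotically is precisely where I expect the argument to stall.
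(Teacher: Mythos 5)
This statement is an open conjecture in the paper: the authors offer no proof, only the computation of the Faber intersection matrices $V_g^k$ for $g\leq 36$ (via \cite{LX8}), the coincidence $R_g=\omega_g$ for $g\leq 17$, and the numerical tables showing $R_g^k\geq\omega_g^k$ thereafter. So there is no argument of theirs to compare yours against, and your proposal should be judged as an attempt at an open problem. To your credit, you are explicit that the attempt stalls, and you stall at exactly the right places. The one piece of your outline that is genuinely correct and unconditional is the upper bound $R_g\leq S(g-2)$ with $S(n)=\sum_k\min(p(k),p(n-k))$, together with the matched asymptotics: Garthwaite's formula \eqref{eqgar} does give $\omega(n)\sim e^{\pi\sqrt{n/3}}/(4\sqrt{n})$ (the $k=1$ term dominates and $A_1$ is a nonzero constant), and the Hardy--Ramanujan estimate gives $S(n)\sim e^{\pi\sqrt{n/3}}/(\pi\sqrt{n})$, so $\liminf_g \omega_g/R_g\geq\pi/4>0$ without any hypothesis. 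That is a nontrivial observation consistent with the conjecture, but it proves neither assertion.

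The two gaps are fatal as the proposal stands. First, the inequality \eqref{dimineq}: your plan is to index an $\omega_g^k\times\omega_g^k$ nonsingular submatrix of $V_g^k$ by partitions of $n-m$ into odd parts $\leq 2m+1$ and argue nonsingularity by diagonal dominance or triangularity. No such indexing is known, and nothing in the definition of $(V_g^k)_{\bold L,\bold L'}$ (an alternating multinomial sum over refinements $\bold{m_1}+\cdots+\bold{m_r}$) suggests a triangular structure with respect to any ordering tied to the $q$-series for $\omega(q)$; the absence of any structural bridge between the Faber matrices and the mock theta function is precisely why the paper states this as a conjecture rather than a theorem. Second, the limit: boundedness of $\omega_g/R_g$ in $[\pi/4,1]$ (the upper end itself conditional on \eqref{dimineq}) does not give convergence. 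Convergence requires the asymptotics of $R_g$ itself, equivalently of the deficiency sequence $a(n)$ in $\dim\mathcal R^k(\mathcal M_g)=p(k)-a(3k-g)$, about which nothing asymptotic is known — the paper only extends Faber's table of $a(n)$ to $n\leq 15$. Your closing paragraph identifies this correctly; the proposal is an honest reduction of the conjecture to two open sub-problems, not a proof.
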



Faber's computation reveals that there should exist a uniquely
determined integer sequence $a(n)$ with $a(n)=0$ for $n\leq0$, such
that
$$
\dim\mathcal R^k(\mathcal M_g)=\begin{cases} p(k)-a(3k-g), & 0\leq k\leq \frac{g-2}{2},\\
\dim\mathcal R^{g-2-k}(\mathcal M_g), & \frac{g-2}{2}< k\leq g-2.
\end{cases}
$$

Thanks to Faber's verification, we know that $R_g^k=\dim\mathcal
R^{g-2-k}(\mathcal M_g)$ for at least $g\leq23$. Faber computed
$a(n)$ for $n\leq10$, we extend this to $n\leq15$. Of course here we
need to assume that Faber's perfect pairing conjecture continues to hold
in higher genera.
$$
\begin{array}{c||c|c|c|c|c|c|c|c|c|c|c|c|c|c|c}
 n & 1 & 2 &3 &4&5&6&7&8&9&10&11&12&13&14&15
\\ \hline a(n) & 1&1&2&3&5&6&10&13&18&24&33&41&56&71&91
\end{array}
$$

Motivated by Faber's calculation, we extracted two integer sequences
from the mock theta function $\omega(q)$, which may be of
independent interest.
\begin{proposition}
There exist two integer sequences $p_{\omega}(n), a_{\omega}(n)$
with $a_{\omega}(n)=0$ for $n\leq0$ such that for all $g\geq2$, if
we define
$$
\omega^k_g:=\begin{cases} p_{\omega}(k)-a_{\omega}(3k-g)), & 0\leq k\leq \frac{g-2}{2},\\
\omega^{g-2-k}_g, & \frac{g-2}{2}< k\leq g-2,
\end{cases}
$$
then $\omega_g=\sum_{k=0}^{g-2}\omega^k_g$.
 Moreover, $p_{\omega}(n)$ and $a_{\omega}(n)$ are uniquely
determined.
\end{proposition}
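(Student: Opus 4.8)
The plan is to read the defining relation $\omega_g=\sum_{k=0}^{g-2}\omega_g^k$ as an infinite triangular linear system in the unknowns $p_\omega(0),p_\omega(1),\dots$ and $a_\omega(1),a_\omega(2),\dots$, and to solve it one unknown at a time. First I would use the built-in palindromic symmetry $\omega_g^k=\omega_g^{g-2-k}$ to fold the sum onto its first half: writing $g-2=2m$ (resp. $g-2=2m+1$) the relation becomes
$$\omega_g=\Big(p_\omega(m)-a_\omega(m-2)\Big)+2\sum_{k=0}^{m-1}\big(p_\omega(k)-a_\omega(3k-g)\big)$$
for even $g=2m+2$, and
$$\omega_g=2\sum_{k=0}^{m}\big(p_\omega(k)-a_\omega(3k-g)\big)$$
for odd $g=2m+3$. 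The crucial observation is the coefficient with which the ``newest'' unknown enters: in the odd equation $p_\omega(m)$ appears with coefficient $2$ and the largest $a$-argument is $3m-g=m-3$, while in the even equation $a_\omega(m-2)$ appears with coefficient $-1$ and the largest $a$-argument is $m-2$.

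Next I would solve the system in rounds. The three even equations $g=2,4,6$ have no positive $a$-argument, so they directly give $p_\omega(0)=\omega(0)=1$, $p_\omega(1)=\omega(2)-2=1$, $p_\omega(2)=\omega(4)-4=2$; the three odd equations $g=3,5,7$ involve no new unknown and serve as consistency checks. For each $m\ge 3$ I would then process the pair $(g=2m+3,\ g=2m+2)$ in that order: the odd equation determines $p_\omega(m)$ (its only other ingredients being $p_\omega(0),\dots,p_\omega(m-1)$ and $a_\omega(1),\dots,a_\omega(m-3)$, all already known), and the even equation then determines $a_\omega(m-2)$ (which additionally needs only the $p_\omega(m)$ just produced). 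A short bookkeeping check shows that every $g\ge 2$ occurs exactly once in this scheme, that every $p_\omega(m)$ and every $a_\omega(j)$ is produced exactly once, and that the only relations not consumed in producing a new unknown are $g=3,5,7$; this simultaneously yields uniqueness, since at each step the new unknown is pinned by an equation whose leading coefficient ($\pm1$ or $2$) is invertible.

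To conclude existence I must check two things: integrality of the solution and consistency of the three leftover equations. Integrality of $a_\omega(m-2)$ is automatic because it enters with coefficient $-1$ and all other terms are integers by induction; integrality of $p_\omega(m)=\tfrac12\omega(2m+1)-\sum_{k<m}\omega_g^k+a_\omega(m-3)$ is where Lemma \ref{even} is essential, since $2m+1$ is odd and hence $\omega(2m+1)$ is even, making $\tfrac12\omega(2m+1)\in\mathbb Z$. The three consistency relations $g=3,5,7$ reduce to $\omega(1)=2$, $\omega(3)=4$, $\omega(5)=8$, all read off from \eqref{eqmock}. The main obstacle is precisely the combinatorial bookkeeping of the pairing together with the ``out of order'' dependence—$a_\omega(m-2)$ is defined through $g=2m+2$ yet requires $p_\omega(m)$ coming from the larger index $g=2m+3$—so I must exhibit an explicit processing order (the round-by-round scheme above) and verify that no unknown is ever over-determined and that exactly the three relations $g=3,5,7$ remain as genuine constraints.
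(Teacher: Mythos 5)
Your proposal is correct and follows essentially the same route as the paper: fold the sum by the palindromic symmetry, isolate the middle term $\omega_g^{\lfloor (g-2)/2\rfloor}$ for the consecutive pair of genera $2m+2$, $2m+3$, solve the resulting triangular system alternately for $p_\omega(m)$ and $a_\omega(m-2)$, and invoke Lemma \ref{even} (evenness of $\omega(2m+1)$) for integrality. Your version merely makes explicit the processing order and the three leftover consistency checks that the paper leaves implicit.
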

\begin{proof}
Consider $\omega_{2m}^{m-1}$ and $\omega_{2m+1}^{m-1}$; then we have
\begin{align*}
p_\omega(m-1)-a_\omega(m-3)=\omega_{2m}^{m-1}&=\omega_{2m}-2\sum_{i=0}^{m-2}\omega_{2m}^i\\
&=\omega_{2m}-2\sum_{i=0}^{m-2}(p_\omega(i)-a_\omega(3i-2m)),
\end{align*}
\begin{align*}
p_\omega(m-1)-a_\omega(m-4)=\omega_{2m+1}^{m-1}&=\frac{\omega_{2m+1}-2\sum_{i=0}^{m-2}\omega_{2m+1}^i}{2}\\
&=\frac{\omega_{2m+1}}{2}-\sum_{i=0}^{m-2}(p_\omega(i)-a_\omega(3i-2m-1)).
\end{align*}
It is not difficult to see that $p_\omega$ and $a_\omega$ are
uniquely determined recursively by the above two identities. The
integrality of $\omega_g^k, p_\omega, a_\omega$ is guaranteed by
Lemma \ref{even} that $\omega_{2m+1}$ is even.
\end{proof}

Let $p(n)$ denote the partition number of $n$. We may compare
$p_{\omega}(n)$ and $p(n)$ in the following tables.
$$
\begin{array}{c||c|c|c|c|c|c|c|c|c|c|c|c|c|c|c|c|c|c}
    n                               &0&1&2&3&4&5&6 &7 &8 &9 &10&11&12&13 &14 &15 &16 &17
\\\hline           p(n)             &1&1&2&3&5&7&11&15&22&30&42&56&77&101&135&176&231&297
\\\hline       p_{\omega}(n)        &1&1&2&3&5&7&11&15&22&30&41&56&75&100&132&172&225&289

\\\hline\hline a_{\omega}(n) &0&1&1&2&3&5&7 &10&13&18&25&34&44&58 &74 &97 &125&160
\end{array}
$$
Comparing the first few values of $a_{\omega}(n)$ and $a(n)$ leads us
to guess that $a_{\omega}(n)\geq a(n)$ may always hold.

$$\begin{array}{c|c|c|c|c|c}
 n & 50 & 100 & 300 & 500 & 800\\
\hline\hline p_{\omega}(n) & 191817& 176074114&
\approx 8\times 10^{15}&\approx 2\times 10^{21}&\approx 5\times 10^{27}\\
\hline p_{\omega}(n)/p(n) & 0.9392 & 0.9239 &0.9074 & 0.9021 & 0.8983\\
\end{array}
$$

\begin{table}[!htp]
\centering
\renewcommand{\baselinestretch}{1}\selectfont
\caption{Rank of Faber's intersection matrix}
\begin{tabular}{c||c|c|c|c|c|c|c|c|c|c|cc}\label{fabermatrix}
{$g$} & $R^0_g$ & $R^1_g$ & $R^2_g$& $R^3_g$ & $R^4_g$ &  $R^5_g$ & $R^6_g$ & $R^7_g$ & $R^8_g$ & $R^9_g$& $R^{10}_g$\\
\hline\hline {2}  & 1& & & & & & & & & & & \\
\hline {3}  & 1 & 1 & & & & & & & & & & \\
\hline  {4} & 1 & 1 & 1 & & & & & & & & & \\
\hline {5} & 1 & 1 & 1 & 1 & & & & & & & & \\
\hline {6}  & 1 & 1 & 2&1 &1 & & & & & & & \\
\hline {7}  & 1 & 1 &2 &2 &1 &1 & & & & & & \\
\hline {8}  & 1 & 1 & 2&2&2 &1 &1 & & & & & \\
\hline {9}  & 1 & 1 & 2&3 &3 &2 &1 &1 & & & & \\
\hline {10}  & 1 & 1 & 2& 3&4 &3 &2 & 1&1 & & & \\
\hline {11}  & 1 & 1 &2 &3 &4 &4 &3 &2 &1 &1 & & \\
\hline {12}  & 1 & 1 &2 &3 &5 &5 &5 &3 &2 &1 &1 & \\
\hline {13}  & 1 & 1 &2 & 3&5 & 6&6 &5 &3 &2 &1 & \\
\hline {14}  & 1 & 1 &2& 3& 5& 6&8 &6 &5 &3 &2 & \\
\hline {15}  & 1 & 1 &2 & 3&5 & 7&9 &9 &7 &5 &3 & \\
\hline {16}  & 1 & 1 &2 & 3& 5& 7&10 &10 &10 &7 &5 & \\
\hline {17}  & 1 & 1 &2 & 3& 5& 7&10 &12 &12 &10 &7 & \\
\hline {18}  & 1 & 1 &2 & 3& 5& 7& 11&13 &{\bf 16} &13 &11 & \\
\hline {19}  & 1 & 1 &2 & 3& 5& 7& 11&14 &17 &17 &14 & \\
\hline {20}  & 1 & 1 &2 & 3& 5& 7&11 &14 &19 &20 &19 & \\
\hline {21}  & 1 & 1 &2 & 3& 5& 7&11 & 15&20 &{\bf 24} &{\bf 24} & \\
\hline {22}  & 1 & 1 &2 & 3& 5& 7& 11& 15& 21& 25& {\bf 29}& \\
\hline {23}  & 1 & 1 &2 & 3& 5& 7& 11& 15& 21& 27& {\bf 32}& \\
\end{tabular}
\end{table}

\begin{table}[!htp]
\centering
\renewcommand{\baselinestretch}{1}\selectfont
\caption{Decomposition of $\omega_g$}
\begin{tabular}{c||c|c|c|c|c|c|c|c|c|c|cc}\label{omegamatrix}
{$g$} & $\omega^0_{g}$ & $\omega^1_{g}$ & $\omega^2_{g}$& $\omega^3_{g}$ & $\omega^4_{g}$ &  $\omega^5_{g}$ & $\omega^6_{g}$ & $\omega^7_{g}$ & $\omega^8_{g}$ & $\omega^9_{g}$& $\omega^{10}_{g}$\\

\hline {18}  & 1 & 1 &2 & 3& 5& 7& 11&13 &{\bf 15} &13 &11 & \\
\hline {19}  & 1 & 1 &2 & 3& 5& 7& 11&14 &17 &17 &14 & \\
\hline {20}  & 1 & 1 &2 & 3& 5& 7&11 &14 &19 &20 &19 & \\
\hline {21}  & 1 & 1 &2 & 3& 5& 7&11 & 15&20 &{\bf 23} &{\bf 23} & \\
\hline {22}  & 1 & 1 &2 & 3& 5& 7& 11& 15& 21& 25& {\bf 28}& \\
\hline {23}  & 1 & 1 &2 & 3& 5& 7& 11& 15& 21& 27& {\bf 31}& \\
\end{tabular}
\end{table}

\begin{conjecture} \label{dimconj} For all $n\geq 0$, we have
\begin{equation}\label{dimineq2}
p(n)\geq p_{\omega}(n),\qquad a(n)\leq a_{\omega}(n)
\end{equation}
 and there
exists some constant $C>0$, such that
$$\lim_{n\rightarrow\infty}\frac{p_{\omega}(n)}{p(n)}=C.$$
\end{conjecture}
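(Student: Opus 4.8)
The plan is to turn the recursive definition of $p_\omega$ and $a_\omega$ from the preceding Proposition into explicit generating functions and then compare the resulting series both with the partition generating function $\prod_{j\ge1}(1-q^j)^{-1}$ and with the geometric relation sequence $a(n)$. Writing $P(q)=\sum_n p_\omega(n)q^n$, $A(q)=\sum_n a_\omega(n)q^n$, together with the rearranged half-series $E(q)=\sum_{m\ge1}\omega(2m-2)q^m$ and $O(q)=\tfrac12\sum_{m\ge1}\omega(2m-1)q^m$, I would multiply the even-index recursion and the odd-index recursion by $q^m$ and sum over $m$. The inner double sums telescope into geometric series in $q$ and $q^3$, producing the linear system $\tfrac{q}{1-q}P=O+\tfrac{q^4}{1-q^3}A$ and $\tfrac{q(1+q)}{1-q}P=E+\tfrac{q^3(1+q^3)}{1-q^3}A$. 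Solving this $2\times2$ system yields the closed forms
\[
P(q)=\frac{(1+q^3)O(q)-qE(q)}{q(1-q^2)},\qquad
A(q)=\frac{(1+q+q^2)\big[(1+q)O(q)-E(q)\big]}{q^3(1-q^2)},
\]
so the entire conjecture reduces to analytic and combinatorial statements about explicit rational-function combinations of the even and odd parts of $\omega(q)$.

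For the limit, I would run a singularity analysis at $q=1$. Since $E$ and $O$ select the coefficients $\omega(2m)$ and $\omega(2m-1)$, which grow like $e^{\pi\sqrt{2m/3}}$ — exactly the Hardy–Ramanujan rate of $p(m)$ — both series behave to leading order like $\tfrac12\omega(\sqrt q)$ and $\tfrac14\omega(\sqrt q)$ as $q\to1$. The crucial and delicate feature is that in the numerator $(1+q^3)O-qE$ these leading singular parts cancel (since $2\cdot\tfrac14-\tfrac12=0$), so the surviving contribution comes from the sub-leading, genuinely mock-modular expansion of $\omega$ at the cusp, as supplied by Zwegers' completion and the Bringmann–Ono/Garthwaite analysis underlying \eqref{eqgar}. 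I expect this surviving term to reproduce the same dominant behavior $\exp\!\big(\tfrac{\pi^2}{6(1-q)}\big)$ as $\prod(1-q^j)^{-1}$, with the matching algebraic prefactor, so that a Rademacher/circle-method comparison gives $p_\omega(n)\sim C\,p(n)$ with $C<1$ an explicit positive ratio of cusp amplitudes, consistent with the decreasing tabulated ratios. Verifying that the cancellation lowers the order of the singularity by exactly the right amount, and no more, is the analytic heart of this part.

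For the inequality $p(n)\ge p_\omega(n)$ both sides are now explicit, and the goal is to show $\prod(1-q^j)^{-1}-P(q)$ has non-negative coefficients; for large $n$ this follows from $C<1$ once the asymptotics are made effective (explicit tail bounds on the Garthwaite series), leaving only a finite initial range to be checked by computation. The inequality $a(n)\le a_\omega(n)$ is different in kind: although $A(q)$ is explicit, the Faber sequence $a(n)$ counts relations in the tautological ring and has no known closed form beyond its computed values, so bounding it above is equivalent to a lower bound on $\dim\mathcal R^k(\mathcal M_g)$, i.e. to producing enough provably independent tautological classes. Incidentally, the two inequalities together would imply Conjecture \ref{dimen}, since $\dim\mathcal R^k=p(k)-a(3k-g)\ge p_\omega(k)-a_\omega(3k-g)=\omega_g^k$ sums to $R_g\ge\omega_g$.

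The main obstacle is therefore twofold. Analytically, the leading-order cancellation in $(1+q^3)O-qE$ forces the constant $C$ to be governed by the purely mock part of $\omega$, so a naive saddle point is insufficient and one must invoke the full harmonic-Maass-form machinery to control the sub-leading cusp term. Geometrically, $a(n)\le a_\omega(n)$ lies outside the reach of the generating-function methods above: it requires independent lower bounds on the dimensions of $\mathcal R^*(\mathcal M_g)$, precisely the information that Faber's (still open) perfect-pairing conjecture would provide. I would expect the two explicit inequalities on $p$ and the existence of the limit $C$ to be attainable with sufficient effort, while $a(n)\le a_\omega(n)$ remains conditional on deeper structural results about the tautological rings.
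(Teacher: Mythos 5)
The statement you are trying to prove is labeled a \emph{conjecture} in the paper, and the paper supplies no proof of it: the authors' only stated evidence is that they ``have checked Conjecture \ref{dimconj} when $g\leq36$,'' together with the tables comparing $R_g^k$ with $\omega_g^k$. So there is no argument in the paper against which to match yours; any complete proof you produced would necessarily be new. Your proposal, however, is not a proof but a research plan, and by your own account it has two essential gaps. First, the existence of the limit $C$ is made to rest on a claimed cancellation of the leading singular parts in $(1+q^3)O(q)-qE(q)$, after which ``the surviving contribution'' is asserted, not shown, to reproduce the singularity $\exp\bigl(\tfrac{\pi^2}{6(1-q)}\bigr)$ of $\prod_j(1-q^j)^{-1}$ with a nonzero prefactor. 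That is exactly the analytic content of the conjecture; the numerology $2\cdot\tfrac14-\tfrac12=0$ is a heuristic, and nothing in Garthwaite's formula \eqref{eqgar} or Zwegers' completion is actually invoked in a way that controls the subleading term. (Your growth-rate bookkeeping is at least consistent: $\omega(2m)$ does grow at the Hardy--Ramanujan rate of $p(m)$, which is why a finite positive $C$ is plausible at all.) The intermediate generating-function identities for $P(q)$ and $A(q)$ are also stated without derivation; the recursion couples $a_\omega(3i-2m)$ to $p_\omega(m-1)$ through the dilated argument $3i-2m$, so the ``telescoping into geometric series in $q$ and $q^3$'' needs to be checked carefully and your displayed rational factors should not be trusted as written.

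Second, and more fundamentally, the inequality $a(n)\leq a_\omega(n)$ cannot be reached by these methods, as you concede: $a(n)$ has no closed form, and indeed its very definition for all $n$ presupposes the pattern $\dim\mathcal R^k(\mathcal M_g)=p(k)-a(3k-g)$, which is only known from Faber's computations in low genus and would in general require his perfect pairing conjecture. Bounding $a(n)$ above amounts to exhibiting enough provably independent tautological classes, which is an open geometric problem independent of any property of $\omega(q)$. Your closing observation that the two inequalities \eqref{dimineq2} imply \eqref{dimineq} is correct and agrees with the remark in the paper, but it transfers rather than closes the gap. In summary: the approach of extracting closed forms for $P(q)$ and $A(q)$ and running a circle-method comparison is a reasonable strategy for the $p$-inequality and the limit, and goes beyond anything attempted in the paper, but as it stands no part of Conjecture \ref{dimconj} is proved by your proposal.
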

Note that the two inequalities \eqref{dimineq2} together imply
\eqref{dimineq}. We may also strengthen the inequality
\eqref{dimineq} in Conjecture \ref{dimen} as follows.
\begin{conjecture} Let $g\geq2$ and $0\leq k\leq g-2$. Then we have
$$R_g^k\geq \omega^k_g.$$
\end{conjecture}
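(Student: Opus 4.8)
The plan is to reduce the displayed inequality to the two scalar inequalities of Conjecture \ref{dimconj} and then to analyze those. First I would use the two symmetries $R_g^k = R_g^{g-2-k}$ and $\omega_g^k = \omega_g^{g-2-k}$ to restrict attention to the range $0 \le k \le \tfrac{g-2}{2}$. In this range, granting Faber's perfect pairing conjecture \eqref{perf} --- which Faber has verified for $g \le 23$ --- one has $R_g^k = \dim \mathcal R^k(\sM_g) = p(k) - a(3k-g)$, an expression of exactly the same shape as the definition $\omega_g^k = p_\omega(k) - a_\omega(3k-g)$. Subtracting gives
$$R_g^k - \omega_g^k = \bigl(p(k) - p_\omega(k)\bigr) + \bigl(a_\omega(3k-g) - a(3k-g)\bigr),$$
so the statement follows term by term from $p(n) \ge p_\omega(n)$ and $a(n) \le a_\omega(n)$: the first bracket is nonnegative by the former, the second by the latter (and when $3k-g \le 0$ both $a$-terms vanish, leaving only the first bracket). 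This is precisely the mechanism behind the remark that \eqref{dimineq2} implies \eqref{dimineq}; the gain here is that the inequality holds degreewise, before summing over $k$.

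With this reduction in hand, the task becomes proving the two inequalities of Conjecture \ref{dimconj}. For $p(n) \ge p_\omega(n)$ I would work at the level of generating functions. The sequences $p_\omega$ and $a_\omega$ are pinned down by the coupled recursions for $\omega_{2m}^{m-1}$ and $\omega_{2m+1}^{m-1}$ established in the proof of the preceding proposition, and the input $\omega_g = \omega(g-2)$ admits the partition-into-odd-parts form $\omega(q) = \sum_{n \ge 0} q^n/\prod_{j=0}^n(1-q^{2j+1})$, whose coefficients are manifestly nonnegative. I would try to solve the recursion to express $\sum_n p_\omega(n)q^n$ as an explicit quotient of such series and then dominate it coefficientwise by $\prod_{n \ge 1}(1-q^n)^{-1}$, the generating function of $p(n)$. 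The asymptotic claim $\lim p_\omega(n)/p(n) = C$ would then come from matching the Hardy--Ramanujan asymptotics of $p(n)$ against Garthwaite's exact series \eqref{eqgar} for $\omega(n)$, both of which are governed by a single $I_{1/2}$-Bessel main term, the decomposition relating $\omega_g$ to $p_\omega$ being only a polynomially controlled perturbation.

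The hard part will be the inequality $a(n) \le a_\omega(n)$, and it is here that essentially all the difficulty concentrates. Whereas $a_\omega$ is an explicit function of the mock theta coefficients, the sequence $a(n)$ is defined only implicitly, through the dimensions $\dim \mathcal R^k(\sM_g)$; these count the relations in each degree of the tautological ring, are not known in closed form, and their very independence of $g$ is itself part of Faber's conjecture \cite{Fa2}. I therefore do not expect a direct comparison of the two $a$-sequences to be feasible in general. Instead I would seek to make the whole argument unconditional by bounding $\rank V_g^k$ from below without invoking perfect pairing at all: exhibit an explicit $\omega_g^k \times \omega_g^k$ submatrix of the Faber intersection matrix $V_g^k$, indexed by a carefully chosen family of partition pairs $(\bold{L}, \bold{L'})$, and prove that its determinant is nonzero. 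The recursive formulae for the entries of $V_g^k$ from \cite{LX8}, together with the near-triangular structure forced by the double-factorial weights $1/\prod_j (2|\bold{m}_j|+1)!!$, give some hope of controlling such a minor, but verifying nonsingularity at exactly the size $\omega_g^k$ --- matching the mock theta prediction on the nose --- is the decisive and most delicate step, and the one I expect to be the genuine obstacle.
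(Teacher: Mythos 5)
This statement is stated in the paper as a \emph{conjecture}: the authors offer no proof, only the numerical verification for $g\leq 36$ reported in the surrounding text and the tables. So there is no ``paper proof'' to match your argument against, and the real question is whether your proposal actually establishes the inequality. It does not. Your opening reduction is correct as a piece of bookkeeping --- writing $R_g^k-\omega_g^k=\bigl(p(k)-p_\omega(k)\bigr)+\bigl(a_\omega(3k-g)-a(3k-g)\bigr)$ and invoking the two inequalities of Conjecture \ref{dimconj} is exactly the degreewise version of the paper's own remark that \eqref{dimineq2} implies \eqref{dimineq} --- but it is conditional on two further open statements: Faber's perfect pairing conjecture (needed to identify $R_g^k$ with $\dim\mathcal R^k(\sM_g)$, known only for $g\leq 23$), and the existence of the $g$-independent sequence $a(n)$ with $\dim\mathcal R^k(\sM_g)=p(k)-a(3k-g)$, which the paper describes only as something ``Faber's computation reveals should exist.'' Even granting both, you have merely traded one conjecture for Conjecture \ref{dimconj}, which is equally open.

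The remainder of the proposal is a research program, not a proof, and you say so yourself: the generating-function domination for $p(n)\geq p_\omega(n)$ is not carried out (note that $p_\omega$ is defined through a coupled recursion involving $a_\omega$, so there is no obvious closed product form to dominate by $\prod(1-q^n)^{-1}$); the asymptotic constant $C$ is asserted to come from matching Bessel main terms without any estimate of the ``polynomially controlled perturbation''; and the unconditional lower bound on $\rank V_g^k$ via a nonsingular $\omega_g^k\times\omega_g^k$ minor is precisely the step you concede you cannot do. That last step is indeed where the content of the conjecture lives --- if you could exhibit such a minor you would not need perfect pairing or the sequence $a(n)$ at all --- but no candidate family of partition pairs is specified and no mechanism for proving nonvanishing of the determinant is given. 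As it stands, every load-bearing step of the argument is either an appeal to another open conjecture or an unexecuted plan, so the statement remains exactly as open as the paper leaves it.
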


We have checked Conjecture \ref{dimconj} when $g\leq36$. Values of
$R_g^k$ and $\omega^k_g$ for $g\leq 23$ are listed in Table
\ref{fabermatrix} and Table \ref{omegamatrix} respectively; note
that $R_g^k=\omega^k_g$ when $g\leq17$. We use bold numbers whenever
the corresponding values are different.

 \vskip 30pt
\section{Gromov-Witten invariants}

Gromov-Witten theory is a generalization of the intersection theory
of moduli spaces of curves. In fact, the intersection theory of
$\overline{\mathcal M}_{g,n}$ corresponds to the Gromov-Witten theory
of a point. A very readable exposition of Gromov-Witten invariants
can be found in \cite{Ge}.

Let $X$ be a smooth projective variety and $\overline{\mathcal
M}_{g,n}(X,\beta)$ denote the moduli stack of stable maps of genus
$g$ and degree $\beta\in H_2(X, \mathbb Z)$ with $n$ marked points.
There are several canonical morphisms:
\begin{enumerate}
\item[i)] Let ${ev}$ be the evaluation maps at the
marked points:
\begin{align}\label{eval}
{ev}: \overline{\mathcal M}_{g,n}(X,\beta)&\rightarrow
X^n\\\nonumber
 {(f:C\to X, x_1,\dots,x_n) } &\mapsto \bigl(
f(x_1),\dots,f(x_n) \bigr) \in X^n .
\end{align}

\item[ii)] Let $st$ be the forgetful map to the domain curve followed
by stabilization:
\begin{equation}\label{st}
st : \overline{\mathcal M}_{g,n+1}(X,\beta) \rightarrow
\overline{\mathcal M}_{g,n}.
\end{equation}

\item[iii)] Let
$\pi$ be the map of forgetting the last marked point $x_{n+1}$ and
stabilizing the resulting domain curve:
\begin{equation}
\pi : \overline{\mathcal M}_{g,n+1}(X,\beta) \rightarrow
\overline{\mathcal M}_{g,n}(X,\beta)
\end{equation}
\end{enumerate}

The forgetful morphism $\pi$ has $n$ canonical sections
$$
\sigma_i : \overline{\mathcal M}_{g,n}(X,\beta)\rightarrow
\overline{\mathcal M}_{g,n+1}(X,\beta) ,
$$
corresponding to the $n$ marked points. Let
$$
\omega=\omega_{\overline{\mathcal
M}_{g,n+1}(X,\beta)/\overline{\mathcal M}_{g,n}(X,\beta)}
$$
be the relative dualizing sheaf and $\Psi_i$ the cohomology class
$c_1(\sigma_i^*\omega)$.

If $\gamma_1,\dots,\gamma_n\in H^*(X,\mathbb Q)$, the Gromov-Witten
invariants are defined by
$$
\langle\tau_{d_1}(\gamma_1) \dots
\tau_{d_n}(\gamma_n)\rangle^X_{g,\beta} = \int_{[\overline{\mathcal
M}_{g,n}(X,\beta)]^{\rm virt}} \Psi_1^{d_1} \cdots \Psi_n^{d_n} \cup
{\rm ev}^*(\gamma_1\boxtimes \cdots \boxtimes \gamma_n).
$$
We also denote the insertion $\tau_{k}(\gamma_a)$ by $\tau_{k,a}$.

Given a basis $\{\gamma_a\}$ for $H^*(X,\mathbb Q)$, we may use
$g_{ab}=\int_X \gamma_a\cup \gamma_b$ and its inverse $g^{ab}$ to
lower and raise indices. We denote $\gamma^a=g^{ab}\gamma_b$ and
apply the Einstein summation convention.

The genus $g$ Gromov-Witten potential of $X$ is defined by
$$\langle\langle\tau_{d_1}(\gamma_1)\cdots\tau_{d_n}(\gamma_n)\rangle\rangle_g=
\sum_{\beta}\left\langle\tau_{d_1}(\gamma_1)\cdots\tau_{d_n}(\gamma_n)\exp\left(\sum_{m,a}t_m^a\tau_m(\gamma_a)\right)\right\rangle^X_{g,\beta}q^\beta.$$

\subsection{Universal equations of Gromov-Witten invariants}
There are some universal equations satisfied by Gromov-Witten
invariants. Universal means that they do not depend on the target
manifolds.

We may pull back tautological relations on $\overline{\sM}_{g,n}$
via the map $st$ in \eqref{st} to get universal equations for
Gromov-Witten invariants by the splitting axiom and cotangent line
comparison equations \cite{KM}.

From the simple fact that the three boundary divisors of
$\overline{\sM}_{0,4}\cong\mathbb P^1$ are equal, we get the WDVV
equation:
\begin{equation}
\langle\langle\tau_{k_1,a_1}\tau_{k_2,a_2}\gamma_{\alpha}\rangle\rangle_0
\langle\langle\gamma^{\alpha}\tau_{k_3,a_3}\tau_{k_4,a_4}\rangle\rangle_0=
\langle\langle\tau_{k_1,a_1}\tau_{k_3,a_3}\gamma_{\alpha}\rangle\rangle_0
\langle\langle\gamma^{\alpha}\tau_{k_2,a_2}\tau_{k_4,a_4}\rangle\rangle_0,
\end{equation}
which is the associativity condition of the quantum cohomology ring.

On $\overline{\sM}_{1,1}$, we have $\psi_1=\frac{1}{12}\delta$,
which implies the genus one topological recursion relation
$$\langle\langle\tau_k(x)\rangle\rangle_1 =
\langle\langle\tau_{k-1}(x)\gamma_\alpha\rangle\rangle_0
\langle\langle\gamma^\alpha\rangle\rangle_1 + \frac{1}{24}
 \langle\langle\tau_{k-1}(x)\gamma_\alpha\gamma^\alpha\rangle\rangle_0
.$$ Other known topological recursion relations in genus $g\leq3$
are given in \cite{ArL, BP, Ge1, Ge2, KiX}.

Xiaobo Liu introduced the ``$T$ operator'' to facilitate the
transformation from topological relations on moduli spaces of curves
to universal equations of Gromov-Witten invariants by the splitting
axiom. The interested
reader may consult \cite{Liu2} for a discussion on relations among
known universal equations.

Using the WDVV equation, Kontsevich derived a recursion formula for
the number $N_d$ of degree $d$ plane rational curves passing through
$3d-1$ general points, illustrating the power of Gromov-Witten
theory in classical enumerative geomtry.

Inspired by Givental's axiomatic Gromov-Witten theory, Y.-P. Lee \cite{Lee1, Lee2} had proposed an algorithm that, conjecturally, computes all
tautological equations on $\overline\sM_{g,n}$ using only linear algebra. Faber, Shadrin and Zvonkine \cite{FSZ}
proved that Y.-P. Lee's algorithm is correct if and only if
the Gorenstein conjecture on the tautological cohomology ring of $\overline\sM_{g,n}$
is true.

There are also universal equations which do not come from the
tautological ring of moduli space of curves. For example, we have
the so-called string equation, dilaton equation and divisor equation,
respectively, in the following.
\begin{align}
\langle\tau_{0,0}\tau_{k_1,a_1}\cdots\tau_{k_n,a_n}\rangle_{g,\beta}^X
=& \sum_{i=1}^n \langle\tau_{k_1,a_1}\cdots\tau_{k_i-1,a_i} \cdots
\tau_{k_n,a_n}\rangle_{g,\beta}^X\\
\langle\tau_{1,0}\tau_{k_1,a_1}\cdots\tau_{k_n,a_n}\rangle_{g,\beta}^X
=& (2g-2+n)
\langle\tau_{k_1,\alpha_1}\cdots\tau_{k_n,\alpha_n}\rangle_{g,\beta}^X\\
\langle\tau_0(\omega)\tau_{k_1,a_1}\cdots\tau_{k_n,a_n}\rangle_{g,\beta}^X
=& \bigl( \omega
\cap \beta \bigr) \* \langle\tau_{k_1,a_1}\cdots\tau_{k_n,a_n}\rangle_{g,\beta}^X \\
\notag  &+ \sum_{i=1}^n
\langle\tau_{k_1,a_1}\cdots\tau_{k_i-1}(\omega\cup\gamma_a) \cdots
\tau_{k_n,a_n}\rangle_{g,\beta}^X,
\end{align}
where $\omega\in H^2(X,\mathbb Q)$.

\subsection{Some vanishing identities} We adopt Gathmann's convention \cite{Gath} in the following
to simplify notation, namely we define
$$\langle\tau_{-2}(pt)\rangle_{0,0}^X=1,$$
$$\langle\tau_{m}(\gamma_1)\tau_{-1-m}(\gamma_2)\rangle_{0,0}^X=(-1)^{\max(m,-1-m)}\int_X\gamma_1\cdot\gamma_2,
\quad m\in\mathbb Z.$$ All other Gromov-Witten invariants that
contain a negative power of a cotangent line are defined to be zero.

Motivated by our work on intersection numbers on moduli spaces of
curves, we \cite{LX6} conjectured the following universal equations for
Gromov-Witten invariants valid in all genera.

\begin{conjecture}\cite{LX6}\label{lx1}
Let $x_i, y_i\in H^*(X)$ and $k\geq 2g-3+r+s$. Then
$$\sum_{g'=0}^g\sum_{j\in\mathbb
Z}(-1)^j\langle\langle\tau_j(\gamma_a)\prod_{i=1}^r\tau_{p_i}(x_i)\rangle\rangle_{g'}
\langle\langle\tau_{k-j}(\gamma^a)\prod_{i=1}^s\tau_{q_i}(y_i)\rangle\rangle_{g-g'}=0.$$
Note that $j$ runs over all integers.
\end{conjecture}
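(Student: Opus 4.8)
The plan is to split the problem into two stages: prove the conjecture for the point target $X=\mathrm{pt}$, where it becomes a relation among pure descendent integrals already within reach of the $n$-point function technology of Theorems \ref{npt} and \ref{nptcoeff}; and then propagate it to an arbitrary smooth projective $X$ through the standard dictionary turning tautological relations on $\overline{\sM}_{g,n}$ into universal equations for Gromov--Witten invariants.

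First I would treat $X=\mathrm{pt}$. Here $H^*(\mathrm{pt})=\mathbb Q$, the contraction index $a$ disappears, and the assertion becomes the vanishing of
\[
\sum_{g'=0}^{g}\sum_{j\in\mathbb Z}(-1)^j\langle\langle\tau_j\prod_{i=1}^{r}\tau_{p_i}\rangle\rangle_{g'}\langle\langle\tau_{k-j}\prod_{i=1}^{s}\tau_{q_i}\rangle\rangle_{g-g'},\qquad k\ge 2g-3+r+s.
\]
Since each potential derivative, evaluated at the background times set to zero, is an honest intersection number, the background insertions are precisely the symmetric split variables of the $n$-point function, while the distinguished insertions $\tau_{p_i},\tau_{q_i}$ single out a refined coefficient. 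Packaging the sum as a coefficient of $\sum_{g'}\sum_{I\amalg J}F_{g'}(y,x_I)F_{g-g'}(-y,x_J)=L_g^{0,0}(y,x_1,\dots,x_n)$ in the notation of Section \ref{tauto} --- the sign $(-1)^j$ being produced automatically by the $(-y)$ in the second factor --- the case $k=2g-2$ is exactly the first of the two $L_g^{0,0}$ vanishings established en route to the Faber conjecture, and the enlarged range $k\ge 2g-3+r+s$ is governed by the coefficient vanishing of Theorem \ref{nptcoeff}(i), whose threshold $k>2g-2+n$ is the same dimension count once the two legs glued at the node are accounted for. I would carry this out by extracting coefficients from the recursion of Proposition \ref{npt2}(ii), and confirm it by a direct check in genus $0$ and $1$, where every term is a genuine invariant.

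Next I would transport the identity to a general target. As recalled in the Gromov--Witten section, a universal equation is produced by pulling a relation among tautological classes on $\overline{\sM}_{g,n}$ back along the stabilization map $st$ and expanding it through the splitting axiom and the cotangent-line comparison formulae, for which Xiaobo Liu's $T$-operator \cite{Liu2} is the efficient bookkeeping device. The single contracted index $a$, with its dual bases $\gamma_a,\gamma^a$, is exactly the algebraic trace of gluing along one separating node, hence the expected output of the splitting axiom applied at a single node. Concretely, I would try to exhibit the point-case identity as the numerical shadow over a point of an explicit boundary relation in $\mathcal R^*(\overline{\sM}_{g,n})$, and then read off the universal equation for $X$ by applying the $T$-operator.

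The hard part will be the negative descendent indices. The sum runs over all $j\in\mathbb Z$, and the terms with $j<0$ or $k-j<0$ acquire meaning only through the conventions $\langle\tau_{-2}(\mathrm{pt})\rangle_{0,0}^X=1$ and $\langle\tau_m(\gamma_1)\tau_{-1-m}(\gamma_2)\rangle_{0,0}^X=(-1)^{\max(m,-1-m)}\int_X\gamma_1\cdot\gamma_2$. These formal contributions have no direct geometric counterpart on $\overline{\sM}_{g,n}$, so the splitting and $T$-operator machinery, designed for honest tautological classes, does not see them automatically; one must show that on the Gromov--Witten side they reassemble into exactly the combination generated by the genuine boundary pushforwards, so that the conventional virtual terms are compatible with the real ones. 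Compounding this, producing the identity in every genus requires knowing that the relevant relation actually lives in the tautological ring for all $g,n$ --- a demand entangled with the Gorenstein and perfect-pairing conjectures, as the equivalence of Faber--Shadrin--Zvonkine \cite{FSZ} makes plain. I expect this matching of formal and geometric boundary terms, rather than the point-target vanishing itself, to be the genuine obstacle, and the reason the statement is posed as a conjecture.
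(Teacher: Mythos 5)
The statement you are proving is posed in the paper as a conjecture: the paper itself contains no proof, only the remarks that the case $X=\mathrm{pt}$ was proved in \cite{LX6} via the recursive formula for $n$-point functions, and that the general case was subsequently proved by X.~Liu and Pandharipande \cite{LiP}, who obtain a new topological recursion relation expressing $\psi_1^{2g+r}$ in terms of boundary classes in $A^{2g+r}(\overline{\mathcal M}_{g,1})$ by virtual localization and then pull it back to get the universal equation. Your first stage --- packaging the point-case sum as the coefficient of $y^{k}\prod x_i^{d_i}$ in $L_g^{0,0}$ and invoking the vanishing results of Theorem \ref{nptcoeff} and the recursion of Proposition \ref{npt2}(ii) --- is exactly the route of \cite{LX6} and is sound. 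Your second stage, however, is a strategy rather than a proof, and the gap is precisely the one ingredient you do not supply: an actual mechanism for producing the tautological relation on $\overline{\mathcal M}_{g,n}$ whose splitting-axiom shadow is the desired universal equation. Saying you would ``try to exhibit the point-case identity as the numerical shadow of an explicit boundary relation'' presupposes the relation exists and is known; the published proof manufactures it geometrically, from the Getzler--Ionel/Graber--Vakil-type vanishing of $\psi_1^{2g+r}$ on the interior, via localization on moduli of relative stable maps. A numerical identity over a point does not by itself lift to a relation of classes.

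Two of your assessments of where the difficulty lies are also off. First, the existence of the needed relation is \emph{not} entangled with the Gorenstein or perfect-pairing conjectures: the Faber--Shadrin--Zvonkine equivalence you cite concerns whether Y.-P.~Lee's algorithm finds \emph{all} tautological relations, whereas the particular relations required here are produced unconditionally by geometric means. Second, the terms with negative descendent indices are not a genuine obstacle to the transfer: Gathmann's conventions are calibrated so that the $j<0$ and $k-j<0$ terms absorb exactly the cotangent-line comparison corrections that arise when one pushes forward from the boundary strata, so their appearance is a bookkeeping feature of the splitting computation, not something that must be separately reconciled with it. In short, your point-target argument matches the cited proof, but the passage to general $X$ as you describe it cannot be completed without the localization input, and the obstacles you flag are not the real ones.
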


\begin{conjecture}\cite{LX6}\label{lx2}
Let $k>g$. Then
\begin{equation}
\sum_{j=0}^{2k}(-1)^j\langle\langle\tau_{j}(T_a)\tau_{2k-j}(T^a)\rangle\rangle_g^X=0.
\end{equation}
We also have
\begin{equation}
\frac12\sum_{j=0}^{2g-2}(-1)^j\langle\langle\tau_{j}(T_a)\tau_{2g-2-j}(T^a)\rangle\rangle_{g-1}=\frac{(2g)!}{B_{2g}}\langle\langle{\rm
ch}_{2g-1}(\mathbb E)\rangle\rangle_g.
\end{equation}
\end{conjecture}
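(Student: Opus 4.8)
The plan is to obtain both identities as \emph{universal equations}, i.e.\ to realize each as the image, under the stabilization morphism $st$ of \eqref{st}, of a relation among tautological classes on $\overline{\sM}_{g,n}$, translated into descendent double-brackets by the splitting axiom together with the cotangent-line comparison equations. In this dictionary the contraction $T_a\otimes T^a$ over dual bases that appears in both displayed equations is precisely the diagonal class that the splitting axiom inserts at a node, and the sum over boundary strata of $\overline{\sM}_{g,n}$ becomes the sum over ways to degenerate a stable map. Accordingly, the substance of each statement lives upstairs, on the moduli of curves, and the Gromov-Witten translation is the routine (if lengthy) part; this is exactly the mechanism encoded by Xiaobo Liu's $T$-operator cited above.

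I would treat the second identity first, since it is the Gromov-Witten incarnation of Mumford's formula \eqref{eqmu} for ${\rm ch}_{2g-1}(\mathbb E)$. Pulling \eqref{eqmu} back by $st$ and applying the splitting axiom, the non-separating boundary contribution $\tfrac12(p_\xi)_*\big(\sum_{i}\psi_1^i(-\psi_2)^{2g-2-i}\big)$, where $p:\overline{\sM}_{g-1,n+2}\to\overline{\sM}_{g,n}$ and $p^*\mathbb E'=\mathbb E\oplus\mathbb C$ as recalled above, produces exactly $\tfrac12\sum_{j=0}^{2g-2}(-1)^j\langle\langle\tau_j(T_a)\tau_{2g-2-j}(T^a)\rangle\rangle_{g-1}$ once one uses $(-1)^{2g-2-i}=(-1)^i$. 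The remaining pieces of \eqref{eqmu} — the $\kappa_{2g-1}$ term, the descendent $\psi$-terms, and the separating boundary divisors — must then be shown to cancel; this is where the first identity, and more generally Conjecture \ref{lx1}, enters, since the separating contribution is precisely the alternating, sign-twisted product $\sum_{g'}\sum_j(-1)^j\langle\langle\tau_j(T_a)\rangle\rangle_{g'}\langle\langle\tau_{2g-2-j}(T^a)\rangle\rangle_{g-g'}$ whose vanishing Conjecture \ref{lx1} asserts. Matching the Bernoulli prefactor then yields the coefficient $(2g)!/B_{2g}$.

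For the first (vanishing) identity the same pullback strategy applies, but now the relation to be lifted is the vanishing already established for the point target: at $X=\mathrm{pt}$ the equation reads $\sum_{j=0}^{2k}(-1)^j\langle\tau_j\tau_{2k-j}\prod_i\tau_{d_i}\rangle_g=0$ for $k>g$, a coefficient vanishing in the spirit of Theorem \ref{nptcoeff}(i) and of the statement $[L^{0,0}_g(y,x_1,\dots,x_n)]_{y^{2g-2}}=0$ of Section \ref{tauto}, extended to higher $y$-degree. The task is therefore to exhibit a genuine relation of tautological \emph{classes} on $\overline{\sM}_{g,n}$ whose integral reproduces this numerical vanishing, and then to pull it back; a Looijenga-type vanishing in $\mathcal R^*(\overline{\sM}_{g,n})$ above the expected socle degree, together with the Faber--Shadrin--Zvonkine correspondence between universal Gromov-Witten equations and tautological relations, is the natural source for such a class relation.

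The main obstacle is exactly this promotion from numbers to classes. The point-case vanishing underlying the first identity was proved analytically, through the recursive $n$-point function of Theorem \ref{npt} and the coefficient extraction of Theorem \ref{nptcoeff} — arguments about rational numbers, not about cohomology classes. To feed the statement into the $st$-pullback machinery one needs the corresponding relation at the level of tautological classes, and producing (or recognizing) that relation is the delicate point, which unconditionally appears to require Gorenstein/perfect-pairing input. Once such a class relation is in hand the splitting-axiom bookkeeping is mechanical, so I expect essentially all of the difficulty to be concentrated there, with the second identity then reducing to Mumford's formula \eqref{eqmu} as soon as the companion vanishing identities are available.
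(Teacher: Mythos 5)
You are attempting to prove a statement that the paper records only as a conjecture (attributed to \cite{LX6}); the paper supplies no proof of it, remarking only that (a) the case $X=\mathrm{pt}$ was proved in \cite{LX6} via the $n$-point function recursion, and (b) the general case was subsequently settled by X.~Liu and Pandharipande by a different mechanism. Your treatment of the second identity is sound as a \emph{reduction} and matches the paper's own observation that Conjecture~\ref{lx2} is equivalent, via the Faber--Pandharipande Chern character formula (the Gromov--Witten form of Mumford's formula \eqref{eqmu}), to the $r=s=0$ case of Conjecture~\ref{lx1}: the non-separating boundary term of \eqref{eqmu} yields the left-hand side of the second identity, and the separating terms are exactly the sums whose vanishing Conjecture~\ref{lx1} asserts. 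But a reduction of one open statement to another is not a proof of either.

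The genuine gap is the one you yourself flag, and your proposed way around it would not work. To push the first identity through the $st$-pullback and splitting-axiom machinery you need a relation among tautological \emph{classes} on $\overline{\sM}_{g,n}$ (equivalently, a topological recursion relation), whereas the point-case input from Theorem~\ref{nptcoeff} and the vanishing of $[L^{0,0}_g]_{y^{2g-2}}$ is purely numerical. Appealing to ``Gorenstein/perfect-pairing input'' or to the Faber--Shadrin--Zvonkine correspondence does not close this gap: the Gorenstein property of the tautological ring of $\overline{\sM}_{g,n}$ is itself an open conjecture (the paper notes that \cite{FSZ} prove Y.-P.~Lee's algorithm is correct \emph{if and only if} that conjecture holds), so your argument would at best be conditional on a statement at least as hard as the one you want. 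The route that actually succeeds, as the paper indicates, is unconditional and geometric: virtual localization on moduli of (relative) stable maps produces a topological recursion relation expressing $\psi_1^{2g+r}$ as an explicit boundary class in $A^{2g+r}(\overline{\mathcal M}_{g,1})$, and pulling \emph{that} class-level identity back gives Conjectures~\ref{lx1} and~\ref{lx2} for every target $X$. Without a class-level relation obtained by such independent geometric means, your proposal establishes neither identity.
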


Note that by the Chern character formula of Faber and Pandharipande
\cite{FP1}, we have the equivalence\smallskip

\centerline{Conjecture \ref{lx1} ($r=s=0$) $\Longleftrightarrow$
Conjecture \ref{lx2}} \medskip

When $X$ is a point, the above conjectures have been proved in
\cite{LX6} using the recursive formula of $n$-point functions.
Recently, X. Liu and Pandharipande \cite{Liu,LiP} give a complete
proof of the above conjectures. Their proof uses virtual
localization to get topological recursion relations (TRR) expressing
$\psi_1^{2g+r}$ in terms of boundary classes in
$A^{2g+r}(\overline{\mathcal M}_{g,1})$, which are then pulled
back to get the universal equations of Conjecture \ref{lx1}.

As we have seen, it is relatively straightforward to go from TRR to
universal equations for Gromov-Witten invariants. This is not always
easy when an identity of descendent integrals is not a TRR. An
example is the Virasoro conjecture \cite{EHX} for Gromov-Witten
invariants, which is a generalization of the DVV formula \eqref{dvv}
and was not discovered until 6 years later. In the same line, one
would hope to find matrix models or corresponding integrable
hierarchies for a general target $X$. Besides the point case, we
know that the Gromov-Witten potential of $X=\mathbb{CP}^1$ is
governed by the Toda hierarchy \cite{EY, OP2}.

\vskip 30pt
\section{Witten's $r$-spin numbers}
In this section, we present an algorithm for computing Witten's
$r$-spin intersection numbers. First we recall Witten's definition
of $r$-spin numbers \cite{Wi2}.

Let $\Sigma$ be a Riemann surface of genus $g$ with marked points
$x_1,x_2,\dots,x_s$. Fix an integer $r\geq 2$. Label each marked
point $x_i$ by an integer $m_i$, $0\leq m_i\leq r-1$. Consider the
line bundle $\mathcal S=K\otimes(\otimes_{i=1}^s\mathcal
O(x_i)^{-m_i})$ over $\Sigma$, where $K$ denotes the canonical line
bundle. If $2g-2-\sum_{i=1}^s m_i$ is divisible by $r$, then there
are $r^{2g}$ isomorphism classes of line bundles $\mathcal T$ such
that $\mathcal T^{\otimes r}\cong \mathcal S$. The choice of an
isomorphism class of $\mathcal T$ determines a cover $\mathcal
M^{1/r}_{g,s}$ of $\mathcal M_{g,s}$. The compactification of
$\mathcal M^{1/r}_{g,s}$ is denoted by $\overline{\mathcal
M}^{1/r}_{g,s}$.

Let $\mathcal V$ be a vector bundle over $\overline{\mathcal
M}^{1/r}_{g,s}$ whose fiber is the dual space to
$H^1(\Sigma,\mathcal T)$. The top Chern class $c_D(\mathcal{V})$ of
this bundle has degree $ D=(g-1)(r-2)/r+\sum_{i=1}^sm_i/r. $

We associate with each marked point $x_i$ an integer $n_i\geq 0$.
Witten's $r$-spin intersection numbers are defined by
$$
\langle\tau_{n_1,m_1}\dots\tau_{n_s,m_s}\rangle_g=
\frac{1}{r^g}\int_{\overline{\mathcal
M}^{1/r}_{g,s}}\prod_{i=1}^s\psi(x_i)^{n_i}\cdot
 c_D(\mathcal{V}),
$$
which is non-zero only if
\begin{equation}
(r+1)(2g-2)+rs=r\sum_{j=1}^s n_j+\sum_{j=1}^s m_j.
\end{equation}

Consider the differential operator
$$
Q=D^r+\sum_{i=0}^{r-2}\gamma_i(x)D^i, \quad \text{ where }
D=\frac{\sqrt{-1}}{\sqrt{r}}\frac{\partial}{\partial x}.
$$

There exists a unique pseudo-differential operator
$Q^{1/r}=D+\sum_{i>0}w_{-i}D^{-i}$.

The Gelfand--Dikii equations read
\begin{equation}
i\frac{\partial Q}{\partial t_{n,m}} = [Q^{n+(m+1)/r}_+,Q]\cdot
\frac{c_{n,m}}{\sqrt{r}},
\end{equation}
where
$$
c_{n,m}=\frac{(-1)^{n}r^{n+1}} {(m+1)(r+m+1)\cdots(nr+m+1)}.
$$

\subsection{Generalized Witten's conjecture}
Consider the formal series $F$ in variables $t_{n,m}$, $n\geq 0$ and
$0\leq m\leq r-1$,
$$
F(t_{0,0},t_{0,1},\dots)=\sum_{d_{n,m}}\langle\prod_{n,m}\tau_{n,m}^{d_{n,m}}\rangle
\prod_{n,m}\frac{t_{n,m}^{d_{n,m}}}{d_{n,m}!}.
$$

The conjecture of Witten \label{Wi2} is that this $F$ is the string
solution of the $r$-Gelfand--Dikii hierarchy, namely
\begin{equation}
\frac{\partial F}{\partial t_{0,0}}=\frac{1}{2}
\sum_{i,j=0}^{r-2}\delta_{i+j,r-2}t_{0,i}t_{0,j}+
\sum_{n=0}^{\infty} \sum_{m=0}^{r-2} t_{n+1,m} \frac{\partial
F}{\partial t_{n,m}},
\end{equation}

\begin{equation}
\frac{\partial ^2 F}{\partial t_{0,0} \partial t_{n,m}}
=-c_{n,m}\mathrm{res} (Q^{n+\frac{m+1}{r}}),
\end{equation}
where $Q$ satisfies the Gelfand-Dikii equations and $t_{0,0}$ is
identified with $x$.

When $r=2$, the above assertion is just the Witten-Kontsevich
theorem. Witten's $r$-spin conjecture for any $r\geq 2$ has been
proved recently by Faber, Shadrin and Zvonkine \cite{FSZ}.
In fact,
Witten's $r$-spin theory corresponds to $A_{r-1}$ singularity. Fan,
Javis and Ruan \cite{FJR} have developed a Gromov-Witten type
quantum theory for all non-degenerate quasi-homogeneous singularity
and proved the more general ADE-integrable hierarchy conjecture of Witten.

\subsection{An algorithm for computing Witten's $r$-spin numbers} We proved a structure
theorem about formal pseudo-differential operators in a forthcoming paper \cite{LX9}. If combined with
the generalized Witten conjecture, it can be used to derive the following effective recursion formulae for computing
Witten's $r$-spin numbers.

\begin{theorem} \cite{LX9} For fixed $r\geq 2$, we have
$$
\langle\langle\tau_{1,0}\tau_{0,0}\rangle\rangle_g=\frac{1}{2}\langle\langle\tau_{0,0}\tau_{0,m'}\rangle\rangle_{g'}
\eta^{m'm''}\langle\langle\tau_{0,m''}\tau_{0,0}\rangle\rangle_{g-g'}
+{\rm Low}(r),
$$
where ${\rm Low}(r)$ is an explicit sum of products of
$\langle\langle\cdots\rangle\rangle$ with genera strictly lower than
$g$.
\end{theorem}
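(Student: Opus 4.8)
The plan is to read $\langle\langle\tau_{1,0}\tau_{0,0}\rangle\rangle_g$ directly off the generalized Witten conjecture and then to rewrite the resulting residue of a fractional power of the Lax operator $Q$ by means of the structure theorem for formal pseudo-differential operators from \cite{LX9}. The second equation of the generalized Witten conjecture reads $\partial^2 F/\partial t_{0,0}\partial t_{n,m}=-c_{n,m}\,\mathrm{res}(Q^{n+(m+1)/r})$. Specializing to $(n,m)=(1,0)$ gives
$$\langle\langle\tau_{1,0}\tau_{0,0}\rangle\rangle=-c_{1,0}\,\mathrm{res}\!\left(Q^{(r+1)/r}\right),$$
while the degree-zero specializations $(n,m)=(0,m)$, $0\le m\le r-2$, produce exactly the two-point functions appearing on the right-hand side of the theorem,
$$\langle\langle\tau_{0,0}\tau_{0,m}\rangle\rangle=-c_{0,m}\,\mathrm{res}\!\left(Q^{(m+1)/r}\right),\qquad c_{0,m}=\frac{r}{m+1}.$$
Thus everything is reduced to understanding the single ``overshooting'' residue $\mathrm{res}(Q^{(r+1)/r})$ in terms of the residues $\mathrm{res}(Q^{(m+1)/r})$ of the lower fractional powers.

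This is precisely what the structure theorem of \cite{LX9} supplies. Writing $Q^{1/r}=D+\sum_{i>0}w_{-i}D^{-i}$ and $Q^{(r+1)/r}=Q\cdot Q^{1/r}$, the theorem expresses $\mathrm{res}(Q^{(r+1)/r})$ as a weighted-homogeneous differential polynomial in the coefficients $\gamma_0,\dots,\gamma_{r-2}$ of $Q$, whose leading non-derivative part is a quadratic form
$$\mathrm{res}\!\left(Q^{(r+1)/r}\right)=\mathrm{(const)}\sum_{m'+m''=r-2}\mathrm{res}\!\left(Q^{(m'+1)/r}\right)\eta^{m'm''}\mathrm{res}\!\left(Q^{(m''+1)/r}\right)+\partial_{t_{0,0}}(\cdots),$$
where the contraction $\eta^{m'm''}=\delta_{m'+m'',\,r-2}$ is the $A_{r-1}$ metric already visible in the quadratic term $\tfrac12\sum_{i+j=r-2}t_{0,i}t_{0,j}$ of the string equation, and $\partial_{t_{0,0}}(\cdots)$ collects the genuine total-$D$ corrections. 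The $r=2$ case is the familiar $\mathrm{res}(L^{3/2})=\tfrac12 u^2+\tfrac12 u_{xx}$ for $L=\partial^2+u$, which reproduces the Witten--Kontsevich identity $\langle\langle\tau_0\tau_1\rangle\rangle=\tfrac12\langle\langle\tau_0^2\rangle\rangle^2+\tfrac{1}{12}\langle\langle\tau_0^4\rangle\rangle$ of Section 3, and serves as a calibration for the constants.

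Substituting the dictionary above turns the quadratic form into $\tfrac12\langle\langle\tau_{0,0}\tau_{0,m'}\rangle\rangle\eta^{m'm''}\langle\langle\tau_{0,m''}\tau_{0,0}\rangle\rangle$ once $c_{1,0}$ and the $c_{0,m}$ are absorbed into the normalization, while the derivative corrections become the explicit remainder $\mathrm{Low}(r)$. The final ingredient is the genus grading: reintroducing the string-coupling parameter in $F$, each $\partial_{t_{0,0}}$ (equivalently each $D$) carries a factor that inserts an extra descendent and hence lowers the genus of a correlator by one relative to the two-point pieces. Consequently the quadratic form is genus-homogeneous and expands into the splitting sum $\sum_{g'=0}^{g}$ of the theorem, whereas every term in $\partial_{t_{0,0}}(\cdots)$ lands in strictly smaller genus, which is exactly what produces $\mathrm{Low}(r)$.

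The main obstacle is the structure theorem itself: one must control the combinatorics of fractional powers of $Q$ well enough to prove that $\mathrm{res}(Q^{(r+1)/r})$ splits cleanly into a quadratic form in the lower residues---with the correct constant $\tfrac12$ and precisely the $A_{r-1}$ contraction $\eta^{m'm''}$---plus a remainder that is a total $t_{0,0}$-derivative. Establishing this requires tracking how the coefficients $w_{-i}$ of $Q^{1/r}$ propagate into the higher residue, which is the genuinely delicate step; once that decomposition is in hand, the translation into correlators and the genus bookkeeping are routine.
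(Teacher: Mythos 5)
Your proposal follows essentially the same route as the paper: the paper derives this theorem by combining the Gelfand--Dikii form of the generalized Witten conjecture (reading $\langle\langle\tau_{1,0}\tau_{0,0}\rangle\rangle$ off as $-c_{1,0}\,\mathrm{res}(Q^{(r+1)/r})$ and the genus-zero two-point data off as residues of lower fractional powers) with a structure theorem on formal pseudo-differential operators proved in the forthcoming paper \cite{LX9}, which is exactly your decomposition of $\mathrm{res}(Q^{(r+1)/r})$ into the quadratic form in lower residues plus derivative corrections. Like the paper, you defer the structure theorem itself to \cite{LX9} and correctly identify it as the substantive step; the only quibble is that the lower-genus nature of the correction terms comes from the powers of the dispersion (string-coupling) parameter attached to the extra $D$-derivatives, not from the mere insertion of additional descendents, though the conclusion and the $r=2$ calibration against the KdV identity are right.
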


In particular, when $r=2$, we have
$$
\langle\langle\tau_{1,0}\tau_{0,0}\rangle\rangle_g=\frac{1}{2}\langle\langle\tau_{0,0}^2\rangle\rangle_{g'}\langle\langle\tau_{0,0}^2\rangle\rangle_{g-g'}
+\frac{1}{12}\langle\langle\tau_{0,0}^4\rangle\rangle_{g-1},
$$
when $r=3$, we have
$$
\langle\langle\tau_{1,0}\tau_{0,0}\rangle\rangle_g=\langle\langle\tau_{0,0}\tau_{0,1}\rangle\rangle_{g'}\langle\langle\tau_{0,0}^2\rangle\rangle_{g-g'}
+\frac{1}{6}\langle\langle\tau_{0,0}^3\tau_{0,1}\rangle\rangle_{g-1},
$$
when $r=4$, we have
\begin{multline*}
\langle\langle\tau_{1,0}\tau_{0,0}\rangle\rangle_g=\langle\langle\tau_{0,0}\tau_{0,2}\rangle\rangle_{g'}\langle\langle\tau_{0,0}^2\rangle\rangle_{g-g'}
+\frac{1}{2}\langle\langle\tau_{0,0}\tau_{0,1}\rangle\rangle_{g'}\langle\langle\tau_{0,1}\tau_{0,0}\rangle\rangle_{g-g'}\\
+\frac{1}{4}\langle\langle\tau_{0,0}^3\tau_{0,2}\rangle\rangle_{g-1}+\frac{1}{48}\langle\langle\tau_{0,0}^2\rangle\rangle_{g'}\langle\langle\tau_{0,0}^4\rangle\rangle_{g-1-g'}
+\frac{1}{32}\langle\langle\tau_{0,0}^3\rangle\rangle_{g'}\langle\langle\tau_{0,0}^3\rangle\rangle_{g-1-g'}\\
+\frac{1}{480}\langle\langle\tau_{0,0}^6\rangle\rangle_{g-2}.
\end{multline*}

Now we describe how to use the above Theorem to compute intersection
numbers. It consists of three steps.

\begin{enumerate}
\item[i)]
When $g=0$, these intersection numbers can be computed by the WDVV
equations. An algorithm has been given by Witten \cite{Wi2}.

\item[ii)] Let $g\geq1$.
For an intersection number containing a puncture operator
$\langle\tau_{0,0}\tau_{n_1,m_1}\cdots\tau_{n_s,m_s}\rangle_g$, we
have from Theorem 1.1 and the dilaton equation
\begin{multline*}
(2g-1+s-a)\langle\tau_{0,0}\tau_{n_1,m_1}\cdots\tau_{n_s,m_s}\rangle_g\\=\sum^{\sim
}_{\underline{s}=I\coprod J}\langle\tau_{0,0}\tau_{0,m'}\prod_{i\in
I}\tau_{n_i,m_i}\rangle_{g'}\eta^{m'm''}\langle\tau_{0,m''}\tau_{0,0}\prod_{i\in
J}\tau_{n_i,m_i}\rangle_{g-g'}+{\rm Low}(r)
\end{multline*}
where $a=\#\{i\mid n_i=0\}$. Note that in the summation
$$\sum^{\sim}_{\underline{s}=I\coprod J}$$
we rule out the cases $I=\{i_1\}$ and $n_{i_1}=0$ or $J=\{i_1\}$ and
$n_{i_1}=0$. Then the right-hand side follows by induction on genera
or number of marked points.

\item[iii)]
For any intersection number
$\langle\tau_{n_1,m_1}\cdots\tau_{n_s,m_s}\rangle_g$ with $n_1\geq
n_2\geq\cdots\geq n_s$, we apply the string equation first
\begin{multline*}
\langle\tau_{n_1,m_1}\cdots\tau_{n_s,m_s}\rangle_g=\langle\tau_{0,0}\tau_{n_1+1,m_1}\cdots\tau_{n_s,m_s}\rangle_g
-\sum_{j=2}^s\langle\tau_{n_1+1,m_1}\tau_{n_j-1,m_j}\prod_{i\neq 1,j}\tau_{n_i,m_i}\rangle_g\\
\end{multline*}
The first term on the right-hand side follows from step (ii) and the
second term follows by induction on the maximum descendent index.
\end{enumerate}

\

We have written a Maple program according to the above algorithm.
Some $r$-spin numbers when $r=3$ and $4$ are listed below. These
values agree with previous results in \cite{BH2, KiX, Sh}.

\begin{table}[!htp]
\centering
\caption{Witten's  $r$-spin numbers ($r=3$)}
\begin{tabular}{|c|c||c|c||c|c|}

\hline $\langle\tau_{1,0}\rangle_1$ & $\frac{1}{12}$&
$\langle\tau_{1,1}\tau_{3,1}\rangle_2$ & $\frac{11}{4320}$ &
 $\langle\tau_{0,1}\tau_{0,1}\tau_{2,1}\rangle_1$ & $\frac{1}{36}$
\\ \hline $\langle\tau_{6,1}\rangle_3$ & $\frac{1}{31104}$ & $\langle\tau_{2,1}\tau_{2,1}\rangle_2$ & $\frac{17}{4320}$ &
$\langle\tau_{0,1}\tau_{1,1}\tau_{1,1}\rangle_1$ & $\frac{1}{36}$
\\ \hline $\langle\tau_{9,0}\rangle_4$ & $\frac{1}{746496}$ & $\langle\tau_{0,1}\tau_{7,0}\rangle_3$ & $\frac{1}{15552}$
& $\langle\tau_{0,1}\tau_{0,1}\tau_{5,0}\rangle_2$ & $\frac{1}{432}$
\\ \hline $\langle\tau_{14,1}\rangle_6$ & $\frac{1}{4837294080}$ & $\langle\tau_{1,1}\tau_{6,0}\rangle_3$ & $\frac{19}{77760}$
& $\langle\tau_{0,1}\tau_{1,1}\tau_{4,0}\rangle_2$ & $\frac{13}{2160}$
\\ \hline $\langle\tau_{17,0}\rangle_7$ & $\frac{1}{162533081088}$ & $\langle\tau_{2,0}\tau_{5,1}\rangle_3$ & $\frac{103}{217728}$
& $\langle\tau_{0,1}\tau_{2,0}\tau_{3,1}\rangle_2$ & $\frac{1}{108}$
\\ \hline $\langle\tau_{22,1}\rangle_9$ & $\frac{1}{1805510340771840}$ & $\langle\tau_{2,1}\tau_{5,0}\rangle_3$ & $\frac{47}{77760}$
&$ \langle\tau_{0,1}\tau_{2,1}\tau_{3,0}\rangle_2$ & $\frac{23}{2160}$

\\ \hline  $\langle\tau_{25,0}\rangle_{10}$ & $\frac{1}{75831434312417280}$
&  $\langle\tau_{3,0}\tau_{4,1}\rangle_3$ & $\frac{443}{544320}$ &
$\langle\tau_{1,1}\tau_{1,1}\tau_{3,0}\rangle_2$ & $\frac{29}{2160}$
\\ \hline  $\langle\tau_{30,1}\rangle_{12}$ & $\frac{1}{1235489060066080849920}$
&  $\langle\tau_{3,1}\tau_{4,0}\rangle_3$ & $\frac{67}{77760}$ &
$\langle\tau_{1,1}\tau_{2,0}\tau_{2,1}\rangle_2$ & $\frac{19}{1080}$
\\\hline
\end{tabular}
\end{table}

\begin{table}[!htp]
\centering
\caption{Witten's  $r$-spin numbers ($r=4$)}
\begin{tabular}{|c|c||c|c||c|c|}

\hline $\langle\tau_{1,0}\rangle_1$ & $\frac{1}{8}$ &
$\langle\tau_{0,2}\tau_{1,2}\rangle_1$ & $\frac{1}{96}$ &
 $\langle\tau_{0,1}\tau_{0,1}\tau_{2,2}\rangle_1$ & $\frac{1}{32}$

\\ \hline $\langle\tau_{3,2}\rangle_2$ & $\frac{3}{2560}$ & $\langle\tau_{0,1}\tau_{4,1}\rangle_2$ & $\frac{1}{320}$ &
$\langle\tau_{0,1}\tau_{0,2}\tau_{2,1}\rangle_1$ & $\frac{1}{24}$

\\ \hline $\langle\tau_{6,0}\rangle_3$ & $\frac{3}{20480}$ & $\langle\tau_{0,2}\tau_{4,0}\rangle_2$ & $\frac{19}{7680}$
& $\langle\tau_{0,1}\tau_{1,1}\tau_{1,2}\rangle_1$ & $\frac{1}{24}$

\\ \hline $\langle\tau_{8,2}\rangle_4$ & $\frac{77}{39321600}$ & $\langle\tau_{1,1}\tau_{3,1}\rangle_2$ & $\frac{7}{960}$
& $\langle\tau_{0,2}\tau_{0,2}\tau_{2,0}\rangle_1$ & $\frac{1}{48}$

\\ \hline $\langle\tau_{11,0}\rangle_5$ & $\frac{19}{104857600}$ & $\langle\tau_{1,2}\tau_{3,0}\rangle_2$ & $\frac{41}{7680}$
& $\langle\tau_{0,1}\tau_{0,1}\tau_{5,0}\rangle_2$ & $\frac{13}{2560}$

\\ \hline $\langle\tau_{13,2}\rangle_6$ & $\frac{59}{33554432000}$ & $\langle\tau_{2,0}\tau_{2,2}\rangle_2$ & $\frac{49}{7680}$
& $\langle\tau_{0,1}\tau_{1,1}\tau_{4,0}\rangle_2$ & $\frac{1}{64}$

\\ \hline $\langle\tau_{16,0}\rangle_{7}$ & $\frac{39}{268435456000}$ &  $\langle\tau_{2,1}\tau_{2,1}\rangle_2$ & $\frac{11}{960}$
& $\langle\tau_{0,1}\tau_{2,1}\tau_{3,0}\rangle_2$ & $\frac{9}{320}$

\\ \hline $\langle\tau_{18,2}\rangle_{8}$ & $\frac{9367}{7215545057280000}$ &  $\langle\tau_{2,0}\tau_{5,0}\rangle_3$ & $\frac{43}{20480}$
& $\langle\tau_{1,1}\tau_{1,1}\tau_{3,0}\rangle_2$ & $\frac{7}{192}$

\\ \hline $\langle\tau_{21,0}\rangle_{9}$ & $\frac{2363}{24739011624960000}$ &  $\langle\tau_{3,0}\tau_{4,0}\rangle_3$ & $\frac{7}{2048}$
& $\langle\tau_{1,1}\tau_{2,0}\tau_{2,1}\rangle_2$ & $\frac{1}{20}$

\\ \hline $\langle\tau_{23,2}\rangle_{10}$ & $\frac{23567}{30786325577728000000}$ &  $\langle\tau_{1,2}\tau_{5,2}\rangle_3$ & $\frac{311}{1720320}$
& $\langle\tau_{0,2}\tau_{2,2}\tau_{2,2}\rangle_2$ & $\frac{11}{3072}$

\\ \hline $\langle\tau_{26,0}\rangle_{11}$ & $\frac{5443}{105553116266496000000}$ &  $\langle\tau_{2,2}\tau_{4,2}\rangle_3$ & $\frac{67}{172032}$
& $\langle\tau_{1,2}\tau_{1,2}\tau_{2,2}\rangle_2$ & $\frac{7}{1536}$
\\ \hline
\end{tabular}
\end{table}

$$ \ \ \ \ $$


\begin{thebibliography}{100}

\bibitem{An} G. Andrews, {\em On the theorems of
Watson and Dragonette for Ramanujan's mock theta functions}, Amer.
J. Math. {\bf 88} (1966), 454--490.


\bibitem{Ar-Co} E. Arbarello and M. Cornalba,
{\em Combinatorial and algebro-geometric cohomology classes on the
moduli spaces of curves}, J. Alg. Geom. 5 (1996), 705--709.

\bibitem{ArL} D.Arcara and Y.-P. Lee, {\em A new tautological relation in $\overline{M}_{3,1}$ via invariance
constraint}, Canad. Math. Bull. {\bf 52} (2009), 161--174. 


\bibitem{BH} E. Br\'ezin and S. Hikami, {\em Vertices from replica in a random matrix theory}, J. Phys. A: Math. Theor. 40 (2007) 13545--13566.


\bibitem{BH2} E. Br\'ezin and S. Hikami, {\em Intersection numbers of Riemann surfaces from Gaussian matrix models},
J. High Energy Phys. 2007, no. 10, 096, 15 pp.


\bibitem{BP} P. Belorousski and R. Pandharipande, {\em A descendent relation in
genus 2}, Ann. Scuola Norm. Sup. Pisa Cl. Sci. (4) 29 (2000)
171-191.


\bibitem{BO} K. Bringmann and K. Ono,
{\em The $f(q)$ mock theta function conjecture and partition ranks},
Invent. Math. {\bf 165} (2006), 243--266.

\bibitem{CLL} L. Chen, Y. Li and K. Liu, {\em Localization, Hurwitz numbers and
the Witten conjecture}, Asian J. Math. {\bf 12} (2008), 511--518.

\bibitem{DVV} R. Dijkgraaf, H. Verlinde, and E. Verlinde, {\em Topological strings
in $d < 1$}, Nuclear Phys. B 352 (1991), 59--86.

\bibitem{EHX} T. Eguchi, K. Hori and C.-S. Xiong, {\em Quantum cohomology and Virasoro algebra}, Phys. Lett.
B 402 (1997), 71--80.

\bibitem{EY} T. Eguchi and S.-K. Yang, {\em The topological $\mathbb{CP}^1$ model and the
large-$N$ matrix integral}, Modern Phys. Lett. A 9 (1994), 2893--2902.

\bibitem{ELSV} T. Ekedahl, S. Lando, M. Shapiro, and A. Vainshtein,
{\em Hurwitz numbers and intersections on moduli spaces of curves},
Invent.\ Math.\ {\bf 146} (2001), 297--327.

\bibitem{FaChow} C. Faber, {\em Chow rings of moduli spaces of curves I, II}, Ann. of
Math. {\bf 132} (1990), 331--419, ibid. 421--449.

\bibitem{Fa} C. Faber, {\em Algorithms for computing intersection numbers on moduli spaces of curves, with an application to the
class of the locus of Jacobians}, in New Trends in Algebraic
Geometry (K. Hulek, F. Catanese, C. Peters and M. Reid, eds.),
93--109, Cambridge University Press, 1999.

\bibitem{Fa2} C. Faber, {\em A conjectural description of the tautological ring of the moduli space of curves}.
In Moduli of curves and abelian varieties, Aspects Math., E33,
Vieweg, Braunschweig, Germany, 1999. 109--129.

\bibitem{FP1} C. Faber, R. Pandharipande, {\em Hodge integrals and Gromov-Witten
theory,} Invent. Math. {\bf 139} (2000) 173--199.

\bibitem{FP2} C. Faber, R. Pandharipande,
{\em Hodge integrals, partition matrices, and the $\lambda_g$
conjecture}, Ann. Math. {\bf 156} (2002), 97--124.

\bibitem{FSZ} C. Faber, S. Shadrin and D. Zvonkine, {\em
Tautological relations and the r-spin Witten conjecture},
math.AG/0612510.

\bibitem{FJR} J. Fan, T. Jarvis and Y. Ruan, {\em The Witten equation, mirror symmetry and quantum singularity
theory}, arxiv:0712.4021.

\bibitem{Gath} A. Gathmann, {\em Topological recursion relations and Gromov-Witten
invariants in higher genus}, arXiv:math/0305361.

\bibitem{Gar} S. Garthwaite, {\em Coefficients of the $\omega(q)$ mock theta function},
Int. J. Number Theory, {\bf 4} (2008), 1027--1042.

\bibitem{Ga} L. Gatto, {\em Intersection theory on moduli spaces of
curves}, No. 61 in Monografias de Matem\'{a}tica. IMPA, Rio de
Janeiro, 2000.

\bibitem{Ge1} E. Getzler, {\em Intersection theory on $\overline M_{1,4}$ and elliptic
Gromov-Witten Invariants}, J. Amer. Math. Soc. {\bf 10} (1997)
973--998.

\bibitem{Ge2} E. Getzler, {\em Topological recursion relations in genus $2$},
Integrable systems and algebraic geometry (Kobe/Kyoto, 1997),
73--106.

\bibitem{Ge} E. Getzler, {\em The Virasoro conjecture for Gromov-Witten invariants},
in Algebraic geometry: Hirzebruch 70 (Warsaw, 1998), 147--176, Amer.
Math. Soc., Providence, RI, 1999.


\bibitem{GP} E. Getzler, R. Pandharipande, {\em Virasoro constraints and the
Chern classes of the Hodge bundle}, Nuclear Phys. B {\bf 530}
(1998), no. 3, 701--714.

\bibitem{Giv} A. Givental, {\em Gromov-Witten invariants and quantization of
quadratic Hamiltonians}, Mosc. Math. J. 1 (2001), no. 4, 551--568,
645.



\bibitem{GJV1} I. P. Goulden, D. M. Jackson, R. Vakil, Towards the geometry of
double Hurwitz numbers, Adv. Math. {\bf 198} (2005), 43--92.

\bibitem{GJV2} I. Goulden, D. Jackson and R. Vakil, {\em A short proof of the
$\lambda_g$-Conjecture without Gromov-Witten theory: Hurwitz theory
and the moduli of curves}, math.AG/0604297.

\bibitem{GJV3} I. Goulden, D. Jackson and R. Vakil, {\em The moduli space of curves, double Hurwitz numbers and Faber's intersection number conjecture},
arXiv:math/0611659.

\bibitem{GrPa} T. Graber, R. Pandharipande, {\em Localization of virtual classes},
Invent. Math. {\bf 135} (1999), 487--518.

\bibitem{GV} T. Graber and R. Vakil, {\em Hodge integrals and Hurwitz numbers via
virtual localization}, Compositio Math. {\bf 135} (2003), 25--36.

\bibitem{HM}  J. Harris and I. Morrison, {\em Moduli
of Curves}, Graduate Texts in Mathematics {\bf 187},
Springer-Verlag, New York, 1998.

\bibitem{Io} E. Ionel, {\em Relations in the tautological ring of} $\mathcal M_g$, Duke Math. J. {\bf 129} (2005), 157--186.

\bibitem{Iz} E. Izadi, {\em The Chow ring of the moduli space of curves of genus 5},
The moduli space of curves (Texel Island, 1994), 267--304, Progr.
Math. 129, Birkh\"{a}user Boston, MA, 1995.

\bibitem{Ka} M. Kazarian, {\em KP hierarchy for Hodge integrals}, Adv. Math. {\bf 221} (2009), 1--21. 

\bibitem{KL} M. Kazarian and S. Lando,
{\em An algebro-geometric proof of Witten's conjecture}, J. Amer.
Math. Soc., {\bf 20} (2007), 1079--1089.

\bibitem{KMZ} R. Kaufmann, Yu. Manin, and D. Zagier, {\em Higher
Weil-Petersson volumes of moduli spaces of stable n-pointed curves},
Comm. Math. Phys. {\bf 181} (1996), 763-787.

\bibitem{KiL} Y. Kim and K. Liu,
{\em Virasoro constraints and Hurwitz numbers through asymptotic
analysis}, Pacific J. Math. {\bf 241} (2009), 275--284.

\bibitem{KiX} T. Kimura and X. Liu, {\em A genus 3 topological recursion relation},
Comm. Math. Phys. {\bf 262} (2006), 645--661.

\bibitem{Kn} F. Knudsen, {\em The projectivity of the moduli space of
stable curves. III. The line bundles on $M\sb{g,n}$, and a proof of
the projectivity of $\overline M\sb{g,n}$ in characteristic $0$},
Math. Scand. {\bf 52} (1983), 200--212.

\bibitem{Ko} M. Kontsevich, {\em
Intersection theory on the moduli space of curves and the matrix
Airy function}, Comm. Math. Phys. {\bf 147} (1992), no. 1, 1--23.

\bibitem{Ko2} M. Kontsevich, {\em Enumeration of rational curves via torus actions,
in The moduli space of curves}, (R. Dijkgraaf, C. Faber, and G. van
der Geer, eds.), Birkh\"{a}user, 1995, 335--368.

\bibitem{KM} M. Kontsevich and Y. Manin, {\em Relations between the
correlators of the topological sigma-model coupled to gravity},
Comm. Math. Phys. {\bf 196} (1998) 385--398.

\bibitem{Lee1} Y.-P. Lee, {\em Invariance of tautological equations I: conjectures and applications},
J. Eur. Math. Soc. (JEMS) {\bf 10} (2008), no. 2, 399--413.

\bibitem{Lee2} Y.-P. Lee, {\em Invariance of tautological equations II: GromovÐWitten theory. With an appendix by Y. Iwao and the author},
J. Amer. Math. Soc. {\bf 22} (2009), no. 2, 331--352.

\bibitem{Liu2} X. Liu, {\em Relations among universal equations for Gromov-Witten invariants},
``Frobenius Manifolds, Quantum Cohomology and Singularities'',
Aspects of Mathematics, A publication of Max-Planck-Institute for
mathematics, 169--180, 2004.

\bibitem{Liu} X. Liu, {\em On certain
vanishing identities for Gromov-Witten invariants}, Trans. Amer.
Math. Soc., to appear.


\bibitem{LiP} X. Liu and R. Pandharipande, {\em New topological recursion relations}, math.AG/0805.4829


\bibitem{Lo1} E. Looijenga, {\em Intersection theory on Deligne-Mumford
compactifications (after Witten and Kontsevich)}, S\'{e}minaire
Bourbaki, 1992/93, Ast\'{e}risque, volume 216, 1993, 187--212.

\bibitem{Lo} E. Looijenga, {\em On the tautological ring of} $\mathcal M_g$, Invent. Math.
{\bf 121} (1995), 411--419.


\bibitem{LP} Y.-P. Lee and R. Pandharipande, {\em Frobenius manifolds, Gromov-Witten
theory, and Virasoro constraints}, book in preparation.


\bibitem{LLZ1} C.-C. Liu, K. Liu, J. Zhou, {\em A proof of a conjecture of Mari\~{n}o-Vafa
on Hodge Integrals}, J. Differential Geom. {\bf 65} (2003),
289--340.

\bibitem{LLZ2} C.-C. Liu, K. Liu and J. Zhou, {\em Mari\~{n}o-Vafa formula and Hodge
integral identities}, J. Algebric Geom. {\bf 15} (2006), 379--398.

\bibitem{Liu} K. Liu, { Localization and conjectures from string duality
}, Proceedings of the International Conference on Complex Geometry
and Related Fields, 175--191, AMS/IP Stud. Adv. Math., 39, Amer.
Math. Soc., Providence, RI, 2007.


\bibitem{LX2}  K. Liu and H. Xu, {\em Recursion formulae of higher Weil-Petersson
volumes}, Int. Math. Res. Not. 2009 (2009), 835--859. 

\bibitem{LX3} K. Liu and H. Xu, {\em Mirzakhani's recursion formula is
equivalent to the Witten-Kontsevich theorem}, Ast\'{e}risque, to appear.

\bibitem{LX6}  K. Liu and H. Xu, {\em A proof of the Faber intersection number
conjecture}, J. Differential Geom, {\bf 83} (2009), 313--335. 

\bibitem{LX7}  K. Liu and H. Xu, {\em The n-point functions for intersection numbers on moduli spaces of
curves}, arXiv:math/0701319.


\bibitem{LX8} K. Liu and H. Xu, {\em Computing top intersections in the tautological ring of $\mathcal M_g$}, in preparation.

\bibitem{LX9} K. Liu and H. Xu, {\em Formal pseudo-differential operators and Witten's $r$-spin numbers}, in preparation.

\bibitem{LZ} S. Lando, A. Zvonkin, {\em Graphs on surfaces and their applications}, Springer, Berlin, 2004.

\bibitem{MaW} I. Madsen and M. Weiss, {\em The stable moduli space of Riemann
surfaces: Mumford's conjecture}, Ann. of Math. (2) {\bf 165} (2007),
843--941.

\bibitem{Mir} M. Mirzakhani, {\em Simple geodesics and Weil-Petersson volumes of moduli spaces of bordered Riemann surfaces}, Invent. Math. 167 (2007), 179--222.

\bibitem{Mir2} M. Mirzakhani, {\em Weil-Petersson volumes and intersection theory on the moduli space of curves}, J. Amer. Math. Soc. 20 (2007),
1--23.

\bibitem{Mo} S. Morita, {\em Generators for the tautological algebra of the moduli space of curves}, Topology, {\bf 42} (2003),
787--819.

\bibitem{MS} M. Mulase and B. Safnuk, {\em Mirzakhani's recursion relations, Virasoro constraints and the KdV hierarchy},
Indiana J. Math. 50 (2008), 189--218.

\bibitem{Mu} D. Mumford, {\em Towards an enumerative geometry of
the moduli space of curves}, in {\em Arithmetic and Geometry} (M.
Artin and J. Tate, eds.), Part II, Birkh\"auser, 1983, 271--328.


\bibitem{Ok2} A. Okounkov, {\em Generating functions for intersection numbers
on moduli spaces of curves}, Int. Math. Res. Not. (2002), 933--957.

\bibitem{OP} A. Okounkov and R. Pandharipande,
{\em Gromov-Witten theory, Hurwitz numbers, and Matrix models, I},
Proc. Symposia Pure Math. 80, 325--414 (2009).

\bibitem{OP2} A. Okounkov and R. Pandharipande, {\em Gromov-Witten theory,
Hurwitz theory, and completed cycles}, Ann. of Math. {\bf 163}
(2006), no. 2, 517--560.

\bibitem{Sh} S. Shadrin, {\em Geometry of meromorphic functions and intersections
on moduli spaces of curves}, Int. Math. Res. Not. 2003, no. 38,
2051--2094.

\bibitem{Te} C. Teleman, {\em The structure of 2D semi-simple field theories}, arXiv:0712.0160.

\bibitem{Va1} R. Vakil, {\em The moduli space of curves and Gromov-Witten theory},
in "Enumerative invariants in algebraic geometry and string theory"
(Behrend and Manetti eds.), Lecture Notes in Mathematics 1947,
Springer, 2008, 143--198.

\bibitem{Wi} E. Witten,
{\it Two-dimensional gravity and intersection theory on moduli
space}, Surveys in Differential Geometry, vol.1, (1991) 243--310.

\bibitem{Wi2} E. Witten, {\em Algebraic geometry
associated with matrix models of two dimensional gravity},
Topological Methods in Modern Mathematics, (Proceedings of Stony
Brook, NY, 1991), Publish or Perish, Houston, 1993, 235--269.

\bibitem{Wo} S. Wolpert,
{\em On the homology of the moduli space of stable curves}, Ann.
Math. {\bf 118} (1983) 491--523.

\bibitem{Za} D. Zagier,
{\it The three-point function for $\overline{\sM}_g$}, unpublished.


\bibitem{Zh2} J. Zhou, {\em Crepant resolution conjecture in all genera for type A singularities},
math.AG/0811.2023.

\bibitem{Zo} P. Zograf, {\it The Weil-Petersson volume of the moduli spaces of
punctured spheres}, In: Contemp. Math., 150 (1993), ed. by R. M.
Hain and C. F. B\"{o}digheimer, 267--372.

\bibitem{Zo2} P. Zograf,
{\em On the large genus asymptotics of Weil-Petersson volumes},
arXiv:0812.0544.

\bibitem{Zw} S. Zwegers, {\em Mock $\theta$-functions and real analytic modular forms}, In:
$q$-series with applications to combinatorics, number theory, and
physics (Ed. B. C. Berndt and K. Ono), Contemp. Math. 291, Amer.
Math. Soc. (2001), 269--277.


\end{thebibliography}
\end{document}